\numberwithin{equation}{section}
\title{Littlewood-Paley functionals on graphs}
\author{{\Large Joseph {\sc Feneuil} \footnote{The author is supported by the ANR project ``Harmonic Analysis at its Boundaries'',   ANR-12-BS01-0013-03.}} 
\\
Institut Fourier, UMR 5582\\
100 rue des Math\'ematiques, BP 74, F-38402 
Saint-Martin d'H\`eres, France\\
joseph.feneuil@ujf-grenoble.fr}
\date{\today}
\begin{document}

\maketitle

\begin{abstract}
Let $\Gamma$ be a graph equipped with a Markov operator $P$. 
We introduce discrete fractional Littlewood-Paley square functionals and prove their $L^p$-boundedness under various geometric assumptions on the graph $\Gamma$.
\end{abstract}

\tableofcontents

\pagebreak

Throughout the paper, we use the following notations. If $E$ is a nonempty set and $A$ and $B$ are some quantities depending on $x\in E$, the notation $A(x) \lesssim B(x)$ means that there exists $C$ such that $A(x) \leq C \,  B(x)$ for all $x\in E$, while $A(x) \simeq B(x)$ means that $A(x) \lesssim B(x)$ and $B(x) \lesssim A(x)$. \par
\noindent If $E$ and $F$ are Banach spaces and $T:E\rightarrow F$ is a bounded linear operator, $\left\Vert T\right\Vert_{E\rightarrow F}$ stands for the operator norm of $T$. When $E=L^p$ and $F=L^q$ for $1\leq p,q\leq +\infty$, $\left\Vert T\right\Vert_{L^p\rightarrow L^q}$ will also be denoted by $\left\Vert T\right\Vert_{p,q}$.\par

\section{Introduction}

This paper is devoted to the $L^p$-boundedness of Littlewood-Paley type square functionals on graphs. The prototype of these functionals is the $g$-function in the Euclidean space, defined in the following way. If $f$ is, say, in ${\mathcal D}(\R^n)$ and $u(x,t)$ denotes ``the'' harmonic extension of $f$, that is $u(x,t)=P_t\ast f(x)$ for all $t>0$ and all $x\in \R^n$, where $P_t$ stands for the Poisson kernel, define
\[
g_1f(x):=\left(\int_0^{+\infty} \left(\left\vert \frac{\partial u}{\partial t}(x,t)\right\vert^2+\sum_{1\leq i\leq n} \left\vert \frac{\partial u}{\partial x_i}(x,t)\right\vert^2\right) \frac{dt}t\right)^{1/2}.
\]
It is a well-known fact (\cite[Chapter 4, Theorem 1]{Steinsingular}) that, for all $p\in (1,+\infty)$,
\begin{equation} \label{Fen1equivalenceg}
\left\Vert   g_1f   \right\Vert_{L^p(\R^n)}\sim \left\Vert f\right\Vert_{L^p(\R^n)}.
\end{equation}
This result was extended in various directions, and we only recall some of them. In the Euclidean framework, the harmonic extension can be replaced by $e^{-tL}$, where $L$ is a second order uniformly elliptic operator in divergence form. In this case, the range of $p$ in \eqref{Fen1equivalenceg} is related to the $L^p$ boundedness of $e^{-tL}$ or $t\nabla e^{-tL}$ (see \cite[Chapter 7]{Auscher2007}). \par
\noindent If, in the functional $g$, one is only interested in the ``horizontal'' part, {\it i.e. } the derivative with respect to $t$, then the $L^p$ boundedness of the corresponding Littlewood-Paley functional holds in the much more general context of measured spaces endowed with appropriate Markov semigroups (\cite[Corollaries 1 and 2]{Stein1}). Notice also that similar results can be proved when the derivative $\frac{\partial }{\partial t}$ is replaced by a ``fractional'' derivative (\cite{CRW}).\par
\noindent Littlewood-Paley functionals were also considered in the context of complete Riemannian manifolds. Let $M$ be a complete Riemannian manifold, $\nabla$ be the Riemannian gradient and $\Delta$ the Laplace-Beltrami operator. Consider the ``vertical'' functionals
\[
Gf(x):=\left(\int_0^{+\infty} \left\vert t\nabla e^{-t\sqrt{\Delta}}f(x)\right\vert^2\frac{dt}t\right)^{1/2}
\]
and
\[
Hf(x):=\left(\int_0^{+\infty} \left\vert \sqrt{t}\nabla e^{-t\Delta}f(x)\right\vert^2\frac{dt}t\right)^{1/2}.
\]
Several $L^p$-boundedness results for $G$ and $H$ are known. Let us recall here that, when $1<p\leq 2$, $G$ and $H$ are $L^p(M)$-bounded when $M$ is an {\it arbitrary} complete Riemannian manifold (\cite[Theorem 1.2]{CDL}), while the $L^p(M)$-boundedness of $G$ and $H$ for $p>2$ holds under much stronger assumptions, expressed in terms of the domination of the gradient of the semigroup by the semigroup applied to the gradient (\cite[Proposition 3.1]{CDgeq2}). \par
\noindent Littlewood-Paley functionals on graphs were also considered. In \cite{Dungey2}, if $\Delta$ is a Laplace operator on a graph $\Gamma$, a ``vertical'' Littlewood-Paley functional, involving the   (continuous-time)    semigroup generated by $\Delta$, is proved to be $L^p(\Gamma)$-bounded for all $1<p\leq 2$ under very weak assumptions on $\Gamma$. In \cite{BRuss}, ``discrete time'' Littlewood-Paley functionals are proved to be $L^p(\Gamma)$-bounded under geometric assumptions on $\Gamma$ (about the volume growth of balls, or $L^2$ Poincar\'e inequalities), while similar results are obtained for weighted $L^p$-norms in \cite{BMartell}. Note also that the $L^p$-boundedness of discrete time Littlewood-Paley functionals in abstract settings was recently established in \cite{al}. \par

\noindent The present paper is devoted to the proof of the $L^p$-boundedness   on graphs    of some discrete time fractional Littlewood-Paley horizontal or vertical functionals. Before stating our results, let us present the graphs under consideration.

\subsection{Presentation of the discrete framework}

\subsubsection{General setting} \label{Fen1DefGraphs}

Let $\Gamma$ be an infinite set and $\mu_{xy} = \mu_{yx} \geq 0$ a symmetric weight on $\Gamma \times \Gamma$. 
The couple $(\Gamma, \mu)$ induces a (weighted unoriented) graph structure if we define the set of edges by
\[E = \{ (x,y) \in \Gamma \times \Gamma, \, \mu_{xy} >0 \}.\]
We call then $x$ and $y$ neighbors (or $x\sim y$) if $(x,y) \in E$.\par
\noindent We will assume that the graph is connected and locally uniformly finite. 
A graph is connected if for all $x,y \in \Gamma$, there exists a path $x = x_0,x_1, \dots,x_N = y$ such that for all $1\leq i\leq N$, $x_{i-1}\sim x_i$ (the length of such path is then $N$).
A graph is said to be locally uniformly finite if there exists $M_0 \in \N$ such that for all $x\in \Gamma$, $\# \{y\in \Gamma, \, y\sim x\} \leq M_0$ (i.e. the number of neighbors of a   vertex    is uniformly bounded).\par
\noindent The graph is endowed with its natural metric $d$, which is the shortest length of a path joining two points. 
  For all $x\in \Gamma$ and all $r>0$, the ball of center $x$ and radius $r$ is defined as $B(x,r) = \{y\in \Gamma, \, d(x,y) <r\}$. 
In the opposite way, the radius of a ball $B$ is the only integer $r$ such that $B=B(x_B,r)$ (with $x_B$ the center of $B$). 
Therefore, for all balls   $B=B(x,r)$ and all $\lambda>0$, we set $\lambda B:=B(x,\lambda r)$ and define    $C_j(B) = 2^{j+1}B \backslash 2^jB$ for all $j\geq 2$ and $C_1(B) = 4B$.\par
\noindent We define the weight $m(x)$ of a vertex $x \in \Gamma$ by $m(x) = \sum_{x\sim y} \mu_{xy}$.
More generally, the volume of a subset $E \subset \Gamma$ is defined as $m(E) := \sum_{x\in E} m(x)$. We use the notation $V(x,r)$ for the volume of the ball $B(x,r)$, and in the same way, $V(B)$ represents the volume of a ball $B$. \par
\noindent We define now the $L^p(\Gamma)$ spaces. For all $1\leq p < +\infty$, we say that a function $f$ on $\Gamma$ belongs to $L^p(\Gamma,m)$ (or $L^p(\Gamma)$) if
\[\|f\|_p := \left( \sum_{x\in \Gamma} |f(x)|^p m(x) \right)^{\frac{1}{p}} < +\infty,\]
while $L^\infty(\Gamma)$ is the set of functions satisfying 
\[\|f\|_{\infty} : = \sup_{x\in\Gamma} |f(x)| <+\infty.\]
Let us define for all $x,y\in \Gamma$ the discrete-time reversible Markov kernel $p$ associated to the measure $m$ by $p(x,y) = \frac{\mu_{xy}}{m(x)m(y)}$.
The discrete kernel $p_l(x,y)$ is then defined recursively for all $l\geq 0$ by
\begin{equation}
\left\{ 
\begin{array}{l}
p_0(x,y) = \frac{\delta(x,y)}{m(y)} \\
p_{l+1}(x,y) = \sum_{z\in \Gamma} p(x,z)p_l(z,y)m(z).
\end{array}
\right.
\end{equation}

\begin{rmq}
Note that this definition of $p_l$ differs from the one of $p_l$ in \cite{Russ}, \cite{BRuss} or \cite{Delmotte1},   because of the $m(y)$ factor.    However,   $p_l$ coincides with $K_l$    in \cite{Dungey}. 
Remark that in the case of the Cayley   graphs    of finitely generated discrete groups, where $m(x)=1$ for all $x$, the definitions   coincide.   
\end{rmq}

Notice that for all $l\geq 1$, we have
\begin{equation} \label{Fen1sum=1}
 \|p_l(x,.)\|_{L^1(\Gamma)} = \sum_{y\in \Gamma} p_l(x,y)m(y) = \sum_{d(x,y) \leq l} p_l(x,y)m(y) = 1 \qquad \forall x\in \Gamma,
\end{equation}
and that the kernel is symmetric:
\begin{equation} \label{Fen1symmetry}
 p_l(x,y) = p_l(y,x) \qquad \forall x,y\in \Gamma.
\end{equation}
For all functions $f$ on $\Gamma$, we define $P$ as the operator with kernel $p$, i.e.
\begin{equation}\label{Fen1defP} Pf(x) = \sum_{y\in \Gamma} p(x,y)f(y)m(y) \qquad \forall x\in \Gamma.\end{equation}
It is easily checked that $P^l$ is the operator with kernel $p_l$.

\begin{rmq}
Even if the definition of $p_l$ is different from \cite{Russ} or \cite{BRuss}, $P^l$ is the same operator in both cases. 
\end{rmq}

Since $p(x,y) \geq 0$ and \eqref{Fen1sum=1} holds, one has, for all $p\in [1,+\infty]$ ,
\begin{equation} \label{Fen1Pcont}
 \|P\|_{p\to p } \leq 1.
\end{equation}
\begin{rmq} \label{Fen1poweri-p}
Let $1<p<+\infty$. Since, for all $l\geq 0$, $\left\Vert P^l\right\Vert_{p\rightarrow p}\leq 1$, the operators $(I-P)^{\beta}$ and $(I+P)^{\beta}$ are $L^p$-bounded for all $\beta>0$ (see \cite{CSC}, p. 423).
\end{rmq}

\noindent We define a nonnegative Laplacian on $\Gamma$ by $\Delta = I-P$. One has then
\begin{equation}
 \begin{split}
 <(I-P)f,f>_{L^2(\Gamma)} & = \sum_{x,y\in \Gamma} p(x,y)(f(x)-f(y))f(x)m(x)m(y) \\
& = \frac{1}{2} \sum_{x,y\in \Gamma} p(x,y)|f(x)-f(y)|^2m(x)m(y),
 \end{split}
\end{equation}
where we use \eqref{Fen1sum=1} for the first equality and \eqref{Fen1symmetry} for the second one. The last calculus proves that the following operator
\[\nabla f(x) = \left( \frac{1}{2} \sum_{y\in \Gamma} p(x,y) |f(y)-f(x)|^2 m(y)\right)^{\frac{1}{2}},\]
called ``length of the gradient'' (and the definition of which is taken from \cite{CoulGrigor}), satisfies
\begin{equation}
 <(I-P)f,f>_{L^2(\Gamma)} = \|\nabla f\|^2_{L^2(\Gamma)}.
\end{equation}

\subsubsection{Geometric assumptions and estimates for the Markov operator}
Under suitable geometric assumptions on $\Gamma$, the iterates of $P$ satisfy various $L^p-L^q$ estimates, which we now review.\par
\noindent Our first assumption is:

\begin{defi}
 A graph $(\Gamma, \mu)$ satisfies \eqref{Fen1LB} if there exists $\epsilon >0$ such that
\begin{equation} \label{Fen1LB} \tag{$LB$} \mu_{xx} \geq \epsilon m(x) \qquad \forall x\in \Gamma. \end{equation}
\end{defi}

\begin{rmq}
Let us state a stronger assumption than \eqref{Fen1LB}: there exists  $\epsilon >0$ such that for all $x\in \Gamma$, $x\sim x$ and
\begin{equation} \label{Fen1LB2} \tag{$LB_2$}
\mu_{xy} \geq \epsilon m(x) \qquad \forall x\sim y.
\end{equation}
Even if \eqref{Fen1LB2} plays a crucial role in some parabolic regularity estimates on graphs (\cite{Delmotte1}), it will play no role in our results.
\end{rmq}

The second assumption is the following one: 

\begin{defi}[Doubling property]
 The weighted graph $(\Gamma,\mu)$ satisfies the doubling property if there exists $C>0$ such that
\begin{equation} \label{Fen1DV} \tag{$DV$}
 V(x,2r) \leq C V(x,r) \qquad \forall x\in \Gamma, \, \forall r>0.
\end{equation}
\end{defi}
Recall that, under the assumption \eqref{Fen1DV}, there exists $d>0$ such that
\begin{equation} \label{Fen1PDV}
V(\theta x,r) \lesssim \theta^d  V(x,r) \qquad \forall r>0, \, x\in \Gamma, \, \theta\geq 1.
\end{equation}
In the sequel, a local version of \eqref{Fen1DV} will also be needed:
\begin{defi}
Say that $(\Gamma,\mu)$ satisfies \eqref{Fen1LDV} if there exists $c >0$ such that
\begin{equation} \label{Fen1LDV} \tag{LDV}
  V(x,2)\leq cm(x)    \qquad \forall x\in \Gamma.
\end{equation}
\end{defi}

Let us also state the Poincar\'e inequalities needed in the sequel.
\begin{defi}[Poincaré inequality on balls]
Let $s\in [1,+\infty)$. The weighted graph $(\Gamma,\mu)$ satisfies the Poincar\'e inequality \eqref{Fen1Ps} if there exists $C>0$ such that, for all $x \in \Gamma$, all $r>0$ and all functions on $\Gamma$

\begin{equation} \label{Fen1Ps} \tag{$P_s$}
\frac{1}{V(x,r)} \sum_{y\in B(x,r)} |f(y)-f_B|^s m(y) \leq C \frac{r^s}{V(x,2r)} \sum_{y\in B(x,2r)} |\nabla f(y)|^s m(y),
\end{equation}
where
\begin{equation} \label{Fen1defmean}f_B = \frac{1}{V(B)} \sum_{x\in B} f(x)m(x). \end{equation}
\end{defi}

\begin{rmq}
It is a known fact that ($P_{s_1}$) implies ($P_{s_2}$) if $s_1 \leq s_2$ (cf \cite{HK}).
\end{rmq}

\noindent Let us now introduce some estimates on $p_l$, which will be needed in the statement of our results.
\begin{defi}[On diagonal upper estimate of $p_l$]
We say that $(\Gamma,\mu)$ satisfies \eqref{Fen1DUE} if there exists $C>0$ such that, for all $x \in \Gamma$ and all $l\in \N^*$,
\begin{equation} \label{Fen1DUE} \tag{$DUE$}
 p_l(x,x) \leq \frac{C}{V(x,\sqrt l)} .
\end{equation}
\end{defi}
\begin{defi}
Let $p\in [1,+\infty]$. Say that a weighted graph $(\Gamma,\mu)$ verifies \eqref{Fen1GGp} if
\begin{equation} \tag{$GG_p$} \label{Fen1GGp}
 \|\nabla P^l f\|_{L^p} \leq \frac{C_p}{\sqrt l} \|f\|_{L^p}\  \forall l\in \N^*, \, \forall f\in L^p(\Gamma).
\end{equation}
\end{defi}

\begin{rmq}
 Note that the assumption ($GG_\infty$)   holds    when $\Gamma$ is the Cayley graph of a finitely generated discrete group (as well as assumption $(P_1)$, see \cite{HK}). Indeed, in this case, 
\[
\nabla_x p_l(x,y) \lesssim \left( \frac{1}{lV(x,\sqrt l)V(y,\sqrt l)} \right)^{\frac{1}{2}} \exp\left(-c \frac{d^2(x,y)}{l} \right).
\]
\end{rmq}

\subsection{Main results}

For all $\beta>0$, all functions $f$ on $\Gamma$ and all $x\in \Gamma$, define
\[g_\beta f(x) = \left( \sum_{l\geq 1} l^{2\beta-1} \left| (I-P)^{\beta} P^{l-1} f(x) \right|^2\right)^{\frac{1}{2}}.\]
For all $\beta>-\frac 12$, all functions $f$ on $\Gamma$ and all $x\in \Gamma$, define
 \[\tilde g_\beta f(x) = \left( \sum_{l\geq 1} l^{2\beta} \left| \nabla (I-P)^{\beta} P^{l-1} f(x) \right|^2\right)^{\frac{1}{2}}.\]

  Here is our main result:   
\begin{theo} \label{Fen1maintheo}
Let $(\Gamma,\mu)$ be a weighted graph satisfying \eqref{Fen1DV}, \eqref{Fen1LB} and \eqref{Fen1DUE}. 
Then 
\begin{enumerate}
\item $g_\beta$ is of weak type $(1,1)$, which means that there exists $C>0$ such that, for all $\lambda>0$,
\[
m\left(\left\{x\in \Gamma;\ g_{\beta}f(x)>\lambda\right\}\right)\leq \frac C{\lambda} \left\Vert f\right\Vert_{L^1(\Gamma)},
\]
and of strong type $(p,p)$ for all $1<p<+\infty$, i.e. there exists a constant $C>0$ such that
\[\|g_\beta f\|_{L^p} \leq C \|f\|_{L^p} \qquad \forall f\in L^p(\Gamma) \cap L^2(\Gamma).\]
\item $\tilde g_\beta$ is of weak type $(1,1)$, and of strong type $(p,p)$ for all $1<p\leq 2$. 
Moreover, if $(\Gamma,\mu)$ satisfies $(P_2)$ and ($GG_q$) for some $q>2$, then $\tilde g_\beta$ is of strong type $(p,p)$ for $p\in (2,q)$.
\item For all $1<p<+\infty$,
\[\| f\|_{L^p} \leq C \|g_\beta f\|_{L^p} \qquad \forall f\in L^p(\Gamma) \cap L^2(\Gamma),\]
for all $2 \leq p < +\infty$
\[\| f\|_{L^p} \leq C \|\tilde g_\beta f\|_{L^p} \qquad \forall f\in L^p(\Gamma) \cap L^2(\Gamma)\]
and if $(P_2)$ and ($GG_q$) are true for some $q>2$, then for all $q'<p<2$ (with $\frac{1}{q'} + \frac{1}{q} = 1$),
\[\| f\|_{L^p} \leq C \|\tilde g_\beta f\|_{L^p} \qquad \forall f\in L^p(\Gamma) \cap L^2(\Gamma).\]
\end{enumerate}
\end{theo}

Our second result deals with the $L^p$-boundedness of $\tilde g_0$, under very weak assumptions on $\Gamma$:

\begin{theo} \label{Fen1Maintheo}
Let $(\Gamma,\mu)$ be a graph satisfying \eqref{Fen1LB} and \eqref{Fen1LDV}. Then $\tilde g_0$ is $L^p$-bounded for all $p\in (1,2]$.
\end{theo}

\begin{rmq}
The range $\beta>-\frac{1}{2}$ for the $L^p$-boundedness of $\tilde g_\beta$ is related to the presence of $\nabla$ in $\tilde{g}_{\beta}$.
\end{rmq}

\begin{rmq}
\begin{itemize}
\item[$1.$]
The $L^p$-boundedness of $g_1$ was proved in \cite[Theorem 1.16]{BRuss}. Theorem \ref{Fen1maintheo} extends this fact to a fractional version of $g_1$. Moreover, we prove a similar estimate for the vertical Littlewood-Paley functional $\tilde g_\beta$ and also establish converse inequalities. 
\item[$2.$]
The $L^p$-boundedness of $g_{\beta}$ can be deduced from arguments in \cite{al}. Indeed,   since    $g_1$ is of strong type $(p,p)$ for all $p\in(1,+\infty)$ by \cite[Theorem 1.16]{BRuss}, \cite[Theorem 3.1]{al} yields that $P$ is an $R$-Ritt operator, and the fact that $g_{\beta}$ is of strong type $(p,p)$ for all $p\in (1,+\infty)$ follows from \cite[Theorem 3.3]{al}. However, these arguments do not yield the fact that $g_{\beta}$ is of weak type $(1,1)$. Moreover, they do not provide any information about $\tilde{g}_{\beta}$. 
\end{itemize}
\end{rmq}

Section 2 is devoted to the introduction of the tools used in the sequel. 
In particular, we state various off-diagonal estimates of the Markov kernel, which are proven in the Appendix A. 
Theorems \ref{Fen1maintheo} and \ref{Fen1Maintheo} are respectively proven in Section 3 and 4.

\ms

{\bf Acknowledgements: } the author would like to thank C. Le Merdy for pointing out reference \cite{al} to him.

\section{Preliminary results}

\label{Fen1tools}

\subsection{Estimates on the kernels}
In this paragraph, we gather various estimates on $p_l$ which will be instrumental in our proofs.
The conjunction of \eqref{Fen1LB}, \eqref{Fen1DV} and \eqref{Fen1DUE} provide us with further estimates on $p_l$. First, one has (\cite[Theorem 5.2, Theorem 6.1]{CGZ}): 
\begin{prop}
Let $(\Gamma, \mu)$ be a weighted graph satisfying \eqref{Fen1DV} and \eqref{Fen1LB}. Then, assumption \eqref{Fen1DUE} is equivalent to the off-diagonal upper estimate:
\begin{equation} \label{Fen1UE} \tag{$UE$}
 p_l(x,y) \leq C \left( \frac{1}{V(x,\sqrt l)V(y,\sqrt l)} \right)^{\frac{1}{2}} \exp\left( -c \frac{d^2(x,y)}{l}\right) \qquad \forall x,y\in \Gamma, \ \forall l\in \N^*.
\end{equation}
\end{prop}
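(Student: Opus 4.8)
The plan is to prove the two implications separately. The direction $\eqref{Fen1UE}\Rightarrow\eqref{Fen1DUE}$ is immediate: setting $x=y$ collapses the Gaussian factor to $1$ and the volume factor to $V(x,\sqrt l)^{-1}$, so $\eqref{Fen1UE}$ contains $\eqref{Fen1DUE}$. All the content lies in the converse $\eqref{Fen1DUE}\Rightarrow\eqref{Fen1UE}$, i.e. in manufacturing Gaussian off-diagonal decay out of a purely on-diagonal bound, and I would carry it out in three stages. In the first stage I would remove the diagonal restriction without yet gaining the exponential. Using \eqref{Fen1symmetry} and the semigroup identity $p_{2l}(x,y)=\sum_z p_l(x,z)p_l(z,y)m(z)$, Cauchy--Schwarz gives
\[
p_{2l}(x,y)\le\Big(\sum_z p_l(x,z)^2 m(z)\Big)^{1/2}\Big(\sum_z p_l(z,y)^2 m(z)\Big)^{1/2}=\big(p_{2l}(x,x)\,p_{2l}(y,y)\big)^{1/2},
\]
so \eqref{Fen1DUE} together with \eqref{Fen1DV} yields $p_{2l}(x,y)\lesssim\big(V(x,\sqrt l)V(y,\sqrt l)\big)^{-1/2}$ for all $x,y$. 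The role of \eqref{Fen1LB} here is to make the walk lazy, since $p(x,x)\ge\epsilon/m(x)>0$ forces comparability of consecutive iterates; this kills any parity obstruction and lets the even-time bound propagate to every $l\in\N^*$.

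Next, I would produce the $L^2$ Davies--Gaffney estimate by Davies' perturbation method. Fixing a bounded Lipschitz weight $\rho$ with $|\rho(u)-\rho(v)|\le\alpha$ whenever $u\sim v$, I would consider the twisted operator $P_\rho=e^{\rho}Pe^{-\rho}$ and show, using \eqref{Fen1sum=1}, \eqref{Fen1LB} and $\|P\|_{2\to2}\le1$, that each step costs only a factor $e^{C\alpha^2}$, so that $\|P_\rho^l\|_{2\to2}\le e^{C\alpha^2 l}$. Testing $P^l$ against functions supported on sets $E,F$ at distance $r$, choosing $\rho$ to interpolate across the gap and optimizing $\alpha\simeq r/l$, this gives the integrated estimate $|\langle P^lf,g\rangle|\lesssim e^{-cr^2/l}\|f\|_2\|g\|_2$.

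In the third stage I would upgrade this $L^2$ decay to the pointwise bound \eqref{Fen1UE}. Writing $p_{2l}(x,y)=\sum_z p_l(x,z)p_l(z,y)m(z)$ and splitting $z$ according to whether it is closer to $x$ or to $y$, I would apply Cauchy--Schwarz on each half, combining the on-diagonal $L^2$-mass $\sum_z p_l(x,z)^2m(z)=p_{2l}(x,x)$ from the first stage with the Gaffney decay from the second stage on the far half; \eqref{Fen1DV} then rearranges the volume factors into the symmetric form of \eqref{Fen1UE}. One should note the finite-propagation feature $p_l(x,y)=0$ for $d(x,y)>l$, visible from \eqref{Fen1sum=1}, which disposes of the regime $d(x,y)>l$ for free, so the Gaussian need only be established when $d(x,y)\le l$, where $e^{-cd^2/l}$ is the natural shape.

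The step I expect to be the main obstacle is this last self-improvement: transferring an $L^2$-integrated exponential estimate into a genuine pointwise kernel bound while tracking the two distinct volume factors $V(x,\sqrt l)$ and $V(y,\sqrt l)$ through \eqref{Fen1DV} and controlling the discrete-time artefacts, namely parity, the comparability of $p_l$ and $p_{l+1}$, and the dichotomy $d(x,y)\le l$ versus $d(x,y)>l$. The hypothesis \eqref{Fen1LB} is exactly what keeps both the laziness used in the first stage and the per-step perturbation bound used in the second stage robust.
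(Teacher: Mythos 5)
Your two implications are the right split, and the easy direction is exactly as you say; but note that the paper itself contains no proof of this proposition: it is quoted directly from \cite[Theorems 5.2 and 6.1]{CGZ}, so there is no internal argument to match. Your reconstruction follows the classical Davies route (Cauchy--Schwarz via the semigroup identity to get the non-Gaussian off-diagonal bound, with \eqref{Fen1LB} giving $p_l(x,y)\leq \epsilon^{-1}p_{l+1}(x,y)$ to dispose of parity; then exponential perturbation $P_\rho=e^{\rho}Pe^{-\rho}$ for the integrated Gaffney estimate; then self-improvement to a pointwise bound), whereas the cited proof in \cite{CGZ} proceeds through the discrete integral maximum principle in Grigor'yan's style. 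Both are legitimate; the Davies route is more classical, while the maximum-principle route handles the discrete-time artefacts you list more systematically.

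Two caveats in your sketch deserve flags. First, the per-step cost of the twisted operator is $e^{\cosh\alpha-1}$, which is $e^{C\alpha^2}$ only for bounded $\alpha$; this is harmless precisely because $p_l(x,y)=0$ when $d(x,y)>l$, forcing the optimized $\alpha\simeq d(x,y)/l\lesssim 1$, but the restriction must be invoked in stage two, not only in the closing remark. Second, in stage three the integrated Gaffney estimate applied to $\delta_x/m(x)$ yields a factor $m(x)^{-1}$, not $V(x,\sqrt l)^{-1}$; to repair this you must split $P^l=P^{l/2}P^{l/2}$ and combine the $L^1\to L^2$ smoothing coming from \eqref{Fen1DUE} (as in \eqref{Fen1uio5}) with the Gaffney decay of the outer factor. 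You correctly identify this self-improvement as the main obstacle; as stated it is a genuine (though standard and fillable) gap rather than an error.
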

\begin{rmq} \label{Fen1pl-1}
An immediate consequence of \eqref{Fen1DV} is that, for all $x,y\in \Gamma$ and $l \in \N^*$,
\[p_{l-1}(x,y) \leq C \left( \frac{1}{V(x,\sqrt l)V(y,\sqrt l)} \right)^{\frac{1}{2}} \exp\left( -c \frac{d^2(x,y)}{l}\right).\]
\end{rmq}
\begin{rmq}
 Assume that $\Gamma$ is a graph satisfying \eqref{Fen1DV}. It is easily checked that assumption \eqref{Fen1UE} is equivalent to
\begin{equation}
 p_l(x,y) \leq \frac{C}{V(y,\sqrt l)} \exp\left( -c \frac{d^2(x,y)}{l}\right)
\end{equation}
or 
\begin{equation}
 p_l(x,y) \leq \frac{C}{V(x,\sqrt l)} \exp\left( -c \frac{d^2(x,y)}{l}\right).
\end{equation}
\end{rmq}
We will now state some ``time regularity'' estimates for higher order differences of $p_l$ (first proved for first order differences by Christ (\cite{christ}) but an easier proof was given by Dungey in \cite{Dungey}). 
\begin{theo} \label{Fen1MTDUE}
Let $(\Gamma,\mu)$ be a weighted graph. Assume that $\Gamma$ satisfies \eqref{Fen1DV}, \eqref{Fen1LB} and \eqref{Fen1DUE}. We define $D(r)$ as the following operator which acts on sequences 
\[(D(r)u)_l = u_l - u_{l+r}.\]
Then, for all $j\geq 0$ there exist two constants $C_j,c_j>0$ such that, for all $l\geq 1$ and all $x,y\in \Gamma$,
\begin{equation} \tag{$TD-UE$} \label{Fen1TDUE} 
  |(D(1)^jp)_l(x,y)| \leq \frac{C_j}{l^j V(x,\sqrt l)} \exp\left( -c_j \frac{d^2(x,y)}{l}\right).
\end{equation}
\end{theo}
  Theorem \ref{Fen1MTDUE} (actually a slightly more general version) will be established in Section \ref{Fen1tools2} in the appendix.    From the previous estimates, we derive the following result, the proof of which will be given in   Section \ref{Fen1tools3} in the appendix.   
\begin{theo} \label{Fen1GTineq}
 Let $(\Gamma,\mu)$ be a weighted graph satisfying \eqref{Fen1DV}, \eqref{Fen1LB} and \eqref{Fen1DUE}. The following Gaffney type inequalities hold: for all $j\in \N$, there exist $c,C>0$ such that for all sets $E,F\subset \Gamma$, all $x_0\in \Gamma$, all $l\in \N^*$ satisfying one of the following conditions
\begin{enumerate}[(i)]
 \item $\sup\left\{ d(x_0,y), \, y\in F \right\} \leq 3 d(E,F)$,
\item $\sup\left\{ d(x_0,y), \, y\in F \right\} \leq \sqrt l$,
 \item $\sup\left\{ d(x_0,x), \, x\in E \right\} \leq 3 d(E,F)$,
\item $\sup\left\{ d(x_0,x), \, x\in E \right\} \leq \sqrt l$,
\end{enumerate}
and all functions $f$ supported   in    $F$, we have, for all $j\in \N$,
\begin{equation} \label{Fen1GT2} \tag{$GT_2$} \begin{split}
\| (I-P)^j P^l f\|_{L^2(E)} & \leq \frac{C}{l^{j}} \frac{1}{V(x_0,\sqrt l)^{\frac{1}{2}}}e^{-c\frac{d(E,F)^2}{l}} \|f\|_{L^1(F)}  \\
\end{split}\end{equation} 
and
\begin{equation} \label{Fen1GGT2} \tag{$GGT_2$} \begin{split} 
\|\nabla (I-P)^j P^l f\|_{L^2(E)} & \leq \frac{C}{l^{j+\frac{1}{2}}} \frac{1}{V(x_0,\sqrt l)^{\frac{1}{2}}} e^{-c\frac{d(E,F)^2}{l}} \|f\|_{L^1(F)} \\
 \|\nabla (I-P)^j P^l f\|_{L^2(E)} & \leq \frac{C}{l^{j+\frac{1}{2}}} e^{-c\frac{d(E,F)^2}{l}} \|f\|_{L^2(F)}.
\end{split} \end{equation} 
\end{theo}

\begin{rmq}
The theorem above will be used for 
\[(E,F) \in \{(B,C_j(B)), \, B \text{ ball }, j\geq 2 \} \cup \{(C_j(B),B), \, B \text{ ball }, j\geq 2 \}.\]
\end{rmq}

\subsection{Results on the Hardy-Littlewood maximal function}

\begin{defi}
 Denote by $\mathcal M$ the Hardy-Littlewood maximal operator
\[\mathcal M f(x) = \sup \frac{1}{V(B)} \sum_{y\in B} |f(y)| m(y)\]
where the supremum is taken over the balls $B$ of $\Gamma$ containing $x$.\par
\noindent In the same way, for $s\geq 1$, $\mathcal M_s$ will denote
\[\mathcal M_s f = \left(\mathcal M |f|^s \right)^{\frac{1}{s}}.\]
\end{defi}
The following observation will turn to be useful: under the assumption \eqref{Fen1UE}, for all $k\geq 1$, all functions $f$ on $\Gamma$ and all $x_0,x\in \Gamma$ with $d(x,x_0)\leq \sqrt{k}$,
\begin{equation}\label{Fen1dominmax}
\left\vert P^kf(x)\right\vert\leq {\mathcal M}f(x_0).
\end{equation}
Indeed,
\[
 \begin{split}
  \left|P^k f(x)\right| & = \left|\sum_{y\in \Gamma} p_k(x,y) f(y)m(y)\right| \\
     & \lesssim \sum_{y\in \Gamma}  \frac{1}{V(x,\sqrt k)} \exp \left(-c \frac{d(x,y)^2}{k}\right) |f(y)|m(y)  \\
& \lesssim \frac{1}{V(x,\sqrt k)} \sum_{d(x,y) < \sqrt k} |f(y)|m(y)  +   \sum_{j\geq 0} \frac{e^{-c2^{2j}}}{V(x,\sqrt k)} \sum_{2^j\sqrt k \leq d(x,y) < 2^{j+1}\sqrt k} 	|f(y)|m(y) \\
& \begin{split} \lesssim  \frac{1}{V(x,\sqrt k)} & \sum_{d(x,y)< \sqrt k} |f(y)|m(y)     \\
& \sum_{j\geq 0} \frac{2^{(j+1)d}e^{-c2^{2j}}}{V(x,2^{j+1}\sqrt k)} \sum_{2^j\sqrt k \leq d(x,y) < 2^{j+1}\sqrt k} 	|f(y)|m(y) \end{split}\\
& \quad \leq \left( 1 + \sum_{j \geq 0} 2^{(j+1)d}e^{-c2^{2j}} \right)	\mathcal M f(x_0) \\
& \lesssim \mathcal M f(x_0),
 \end{split}
\]
where we use for the fifth line the doubling property and the fact that $d^2(x,x_0)\leq k$.  
\begin{prop} \label{Fen1MaxFunct}
 Let $(\Gamma,\mu)$ be a weighted graph satisfying \eqref{Fen1DV}. 
If $(q,q_0,\beta) \in (1,+\infty]^2 \times [0,1)$ satisfy $\frac{1}{q} = \frac{1}{q_0} - \beta$, then $\mathcal M^\beta$ is bounded from $L^{q_0}(\Gamma)$ to $L^q(\Gamma)$.
\end{prop}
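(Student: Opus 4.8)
The plan is to prove the $L^{q_0}\to L^q$ boundedness of the fractional maximal operator
\[
\mathcal M^\beta f(x)=\sup_{B\ni x}V(B)^\beta\,\frac{1}{V(B)}\sum_{y\in B}|f(y)|\,m(y)
\]
by a Hedberg-type pointwise inequality, reducing everything to the (standard) $L^{q_0}$-boundedness of the Hardy--Littlewood maximal operator $\mathcal M$. Concretely, I would first establish, for $\beta>0$ and $1<q_0<+\infty$, the pointwise bound
\[
\mathcal M^\beta f(x)\lesssim \big(\mathcal M f(x)\big)^{q_0/q}\,\|f\|_{q_0}^{\beta q_0}\qquad\forall x\in\Gamma,
\]
and then simply raise it to the power $q$ and sum against $m$.

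For the pointwise inequality I would control a single ball $B\ni x$ in two competing ways. First, the average is trivially at most $\mathcal M f(x)$, so the $B$-term $V(B)^\beta\,\frac1{V(B)}\sum_{y\in B}|f(y)|m(y)$ is at most $V(B)^\beta\,\mathcal M f(x)$. Second, Hölder's inequality with exponents $q_0$ and $q_0'=q_0/(q_0-1)$ gives
\[
\frac{1}{V(B)}\sum_{y\in B}|f(y)|\,m(y)\le \frac{1}{V(B)}\,\|f\|_{q_0}\,V(B)^{1-1/q_0}=V(B)^{-1/q_0}\|f\|_{q_0},
\]
so the same $B$-term is at most $V(B)^{\beta-1/q_0}\|f\|_{q_0}=V(B)^{-1/q}\|f\|_{q_0}$, where I used the hypothesis $\frac1q=\frac1{q_0}-\beta$. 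Hence every ball contributes at most $\min\{V(B)^\beta\,\mathcal M f(x),\,V(B)^{-1/q}\|f\|_{q_0}\}$. Since the function $t\mapsto\min\{a t^\beta,b t^{-1/q}\}$ on $t>0$ is increasing then decreasing and is therefore everywhere dominated by its crossover value $a^{(1/q)/(\beta+1/q)}b^{\beta/(\beta+1/q)}$, and since $\beta+\frac1q=\frac1{q_0}$, taking the supremum over $B$ yields exactly the claimed pointwise bound. A pleasant feature of this argument is that it is insensitive to the discreteness of $\Gamma$: although $V(B)$ ranges only over a discrete set of values, so that no ball need realize the optimal volume, the crossover estimate holds for \emph{every} $t>0$ and hence for every achievable $V(B)$, so no such optimal choice is required.

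Finally I would integrate. Raising the pointwise bound to the power $q$ and summing against $m$ gives
\[
\|\mathcal M^\beta f\|_q^{\,q}\lesssim \|f\|_{q_0}^{\beta q_0 q}\sum_{x\in\Gamma}\big(\mathcal M f(x)\big)^{q_0}m(x)=\|f\|_{q_0}^{\beta q_0 q}\,\|\mathcal M f\|_{q_0}^{q_0}.
\]
The only genuine input now is the $L^{q_0}$-boundedness of $\mathcal M$ for $1<q_0<+\infty$, which holds on any doubling space via the Vitali covering lemma, the weak $(1,1)$ estimate and Marcinkiewicz interpolation (this is precisely where \eqref{Fen1DV} is used); applying it gives $\|\mathcal M^\beta f\|_q^{\,q}\lesssim\|f\|_{q_0}^{\beta q_0 q+q_0}$, and a direct check that $\beta q_0 q+q_0=q$ (equivalently $q_0(\beta+\frac1q)=1$) finishes the proof. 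The degenerate cases are immediate and handled separately: $\beta=0$ forces $q=q_0$ and the statement is the plain maximal theorem; the endpoint $q=+\infty$ (so $\beta=1/q_0$) follows at once from the Hölder bound since then $V(B)^{-1/q}=1$, giving $\mathcal M^\beta f(x)\le\|f\|_{q_0}$; and $q_0=+\infty$ forces $\beta=0,\ q=+\infty$ with $\mathcal M^0 f\le\|f\|_\infty$. I do not expect a real obstacle here, as the result is essentially the classical Sobolev bound for the fractional maximal function; the one point deserving care is making the two-bound/crossover step rigorous in the discrete setting, which the formulation above does.
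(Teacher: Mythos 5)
Your proof is correct, but there is nothing in the paper to compare it against: Proposition \ref{Fen1MaxFunct} is stated without proof, recalled as a standard fact about spaces of homogeneous type (just as the Fefferman--Stein inequality that follows it is attributed to \cite{FS1} with the remark that the proof extends to doubling spaces). Your blind reconstruction is the classical Hedberg argument, and every step checks out: the two competing bounds $V(B)^{\beta}\mathcal Mf(x)$ and $V(B)^{-1/q}\|f\|_{q_0}$ (the latter via H\"older and the relation $\beta-\frac1{q_0}=-\frac1q$), the domination of $\min\{at^{\beta},bt^{-1/q}\}$ by its crossover value $a^{q_0/q}b^{\beta q_0}$ using $\beta+\frac1q=\frac1{q_0}$, and the final exponent count $\beta q_0q+q_0=q$. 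Your remark that the crossover bound holds for every $t>0$, so that no ball need realize the optimal volume in the discrete setting, correctly disposes of the one genuinely graph-specific point, and the degenerate cases $\beta=0$, $q=+\infty$, $q_0=+\infty$ are handled properly. Note that the paper never defines $\mathcal M^\beta$ explicitly; your reading of it as the fractional maximal operator $\sup_{B\ni x}V(B)^{\beta-1}\sum_{y\in B}|f(y)|m(y)$ is the only one consistent with the Sobolev-type exponent relation in the hypothesis, and with it your argument proves exactly the stated proposition, with \eqref{Fen1DV} entering only through the $L^{q_0}$-boundedness of $\mathcal M$ (Vitali covering, weak $(1,1)$, interpolation), as you say.
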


\noindent We also recall the Fefferman-Stein inequality.

\begin{theo} \label{Fen1FeffermanStein}
Let $(\Gamma,\mu)$ be a weighted graph satisfying \eqref{Fen1DV} and $s\geq 1$. Then, if $p,q\in (s,+\infty)$, there exists $C_{p,q}>0$ such that for all sequences $(f_n)_{n\in \N}$ of measurable functions defined on $\Gamma$,
\[\left\| \left[ \sum_{n=0}^{+\infty} (\mathcal M_s f_n)^q \right]^{\frac{1}{q}}\right\|_{p} \leq C_{p,q} \left\| \left[ \sum_{n=0}^{+\infty} |f_n|^q \right]^{\frac{1}{q}}\right\|_{p}.\]
\end{theo}

\noindent This result is proven in $\R^d$ in \cite{FS1} and the proof easily extends to spaces of homogeneous type.

\subsection{$L^p$ boundedness for Calder\'on-Zygmund operators}

We will make use of the following theorems about Calder\'on-Zygmund operators ``without kernels'', which can be found in    \cite{BRuss}, Theorem 1.14 and Theorem 1.17. See also    \cite{Auscher2007}, Theorem 1.1 and 1.2.\par
\noindent Before stating these results, recall   (see Theorem \ref{Fen1maintheo})    that a sublinear operator $T$ is of weak type $(p,p)$ ($1\leq p<+\infty$) if there exists $C>0$ such that, for all $\lambda>0$ and all $f\in L^{p}(\Gamma)$,
\[
m\left(\left\{x\in \Gamma;\ \left\vert Tf(x)\right\vert>\lambda\right\}\right)\leq \frac C{\lambda^{p}}\left\Vert f\right\Vert_{L^{p}(\Gamma)}^{p}.
\]
Furthermore, $T$ is said to be of strong type $(p,p)$ if there exists $C>0$ such that, for all $f\in L^p(\Gamma)$,
\[
\left\Vert Tf\right\Vert_{L^p(\Gamma)}\leq C\left\Vert f\right\Vert_{L^p(\Gamma)}.
\]
  
\begin{theo} \label{Fen1interpolation1}
Let $p_0 \in (2,+\infty]$. Assume that $\Gamma$ satisfies the doubling property \eqref{Fen1DV} and let $T$ be a sublinear operator of strong type $(2,2)$ defined on $\Gamma$.
For all balls $B$, let $A_B$ be a linear operator acting on $L^2(\Gamma)$. 
Assume that there exists a constant $C>0$ such that, for all $f\in L^2(\Gamma)$, all $x\in \Gamma$ and all balls $B \ni x$,
\begin{equation} \label{Fen1inter1} \frac{1}{V(B)^{\frac{1}{2}}} \left\| T(I-A_B)f\right\|_{L^2(B)} \leq C \mathcal M_2f(x) \end{equation}
and
\begin{equation} \label{Fen1inter2} \frac{1}{V(B)^{\frac{1}{p_0}}} \left\| TA_Bf\right\|_{L^{p_0}(B)} \leq C  \mathcal M_2|Tf|(x).\end{equation}
Then, for all $p\in (2,p_0)$, $T$ is of strong type $(p,p)$.
\end{theo}

\begin{theo} \label{Fen1interpolation2}

Let $p_0 \in [1,2)$. Assume that $\Gamma$ satisfies the doubling property \eqref{Fen1DV} and let $T$ be a sublinear operator of strong type $(2,2)$. 
For all balls $B$, let $A_B$ be a linear operator acting on $L^2(\Gamma)$.
Assume that, for all $j\geq 1$, there exists $\varphi(j) >0$ such that, for all $B \subset \Gamma$ and all functions supported in $B$ and all $j\geq 2$,
 \begin{equation} \label{Fen1inter3} \frac{1}{V(2^{j+1}B)^{\frac{1}{2}}} \left\| T(I-A_B)f\right\|_{L^2(C_j(B))} \leq \varphi(j) \dfrac{1}{V(B)^{\frac{1}{p_0}}} \|f\|_{L^{p_0}}\end{equation}
and for all $j\geq 1$
\begin{equation} \label{Fen1inter4} \frac{1}{V(2^{j+1}B)^{\frac{1}{p_0}}} \left\| A_Bf\right\|_{L^{2}(C_j(B))} \leq \varphi(j) \dfrac{1}{V(B)^{\frac{1}{p_0}}} \|f\|_{L^{p_0}}.\end{equation}
If $\ds \sum_{j\geq 1} \varphi(j) 2^{jd} <+ \infty$, where $d$ is given by Proposition \ref{Fen1PDV}, then $T$ is of weak type $(p_0,p_0)$, and therefore of strong type $(p,p)$ for all $p_0<p<2$. 

\end{theo}

\section{Littlewood-Paley functionals}

\label{Fen1LPtheory}

\subsection{$L^2(\Gamma)$-boundedness    of $ g^2_\beta$}
In order to prove Theorem \ref{Fen1maintheo}, let us introduce an extra functional.  
\begin{lem} \label{Fen1casL2}
Let $(\Gamma, \mu)$ be a weighted graph. Let $P$  be the operator defined by \eqref{Fen1defP}.

Define, for all $\beta>0$ and all functions $f\in L^2(\Gamma)$,     $g^2_\beta f$ by
 \[g^2_\beta f(x) = \left( \sum_{l\geq 1} b_l \left| (I-P^2)^{\beta} P^{l-1} f(x) \right|^2\right)^{\frac{1}{2}}\]
where $\sum_{l\geq 1} b_l z^{l-1}$ is the Taylor series of the function  $z \mapsto (1-z)^{-2\beta}$. 
Then $g^2_\beta$ is $L^2(\Gamma)$ bounded. More precisely, $g^2_\beta$ is an isometry on $L^2(\Gamma)$, which means that, for all $f\in L^2(\Gamma)$,
\[\| g^2_\beta f\|_{L^2(\Gamma)} = \|f\|_{L^2(\Gamma)}.\]
\end{lem}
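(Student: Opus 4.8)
The plan is to forget the graph geometry entirely and run a pure spectral-theory computation, since no assumption beyond ``weighted graph'' is available. The starting observation is that $P$ is self-adjoint on $L^2(\Gamma)$: the symmetry $p(x,y)=p(y,x)$ makes $\langle Pf,g\rangle=\langle f,Pg\rangle$, and by \eqref{Fen1Pcont} we have $\|P\|_{2\to2}\le1$, so the spectrum of $P$ is contained in $[-1,1]$. Hence $P^2$ is self-adjoint with spectrum in $[0,1]$, the operator $(I-P^2)^{\beta}$ is well defined by the Borel functional calculus, it is self-adjoint, and it commutes with every power $P^{l-1}$ (all being functions of $P$). This is the only structure the proof will use.

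First I would expand the squared norm. Since the Taylor coefficients of $z\mapsto(1-z)^{-2\beta}$ are nonnegative, indeed $b_l=\binom{2\beta+l-2}{l-1}\ge0$ for $\beta>0$, Tonelli's theorem lets me interchange the sum over $l$ with the summation over $\Gamma$ defining the $L^2$ norm, giving
\[
\|g^2_\beta f\|_{L^2}^2=\sum_{l\ge1}b_l\,\big\|(I-P^2)^{\beta}P^{l-1}f\big\|_{L^2}^2 .
\]
Using self-adjointness and commutativity, each summand becomes an inner product:
\[
\big\|(I-P^2)^{\beta}P^{l-1}f\big\|_{L^2}^2=\big\langle (I-P^2)^{2\beta}P^{2(l-1)}f,\,f\big\rangle .
\]

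Next I would pass to the spectral resolution $P=\int_{-1}^{1}\lambda\,dE_\lambda$ and introduce the finite positive measure $d\mu_f(\lambda)=d\langle E_\lambda f,f\rangle$, whose total mass is $\|f\|_{L^2}^2$. Substituting and invoking Tonelli a second time (every factor $b_l$, $(1-\lambda^2)^{2\beta}$ and $(\lambda^2)^{l-1}$ is nonnegative on the spectrum) turns the series into a single integral,
\[
\|g^2_\beta f\|_{L^2}^2=\int_{[-1,1]}(1-\lambda^2)^{2\beta}\Big(\sum_{l\ge1}b_l\,(\lambda^2)^{l-1}\Big)\,d\mu_f(\lambda).
\]
For $\lambda^2<1$ the inner series sums to $(1-\lambda^2)^{-2\beta}$, so the integrand collapses to the constant $1$. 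This is the crux: the algebraic cancellation $(1-z)^{2\beta}(1-z)^{-2\beta}=1$ is dictated precisely by the choice of the $b_l$, and it is what produces an exact isometry rather than a mere equivalence.

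The one delicate point, which I expect to be the main obstacle, is the top of the spectrum $\lambda=\pm1$, where $\sum_l b_l(\lambda^2)^{l-1}$ diverges while $(1-\lambda^2)^{2\beta}$ vanishes. Termwise each integrand $b_l(1-\lambda^2)^{2\beta}(\lambda^2)^{l-1}$ already vanishes at $\lambda=\pm1$, so the set $\{\pm1\}$ contributes nothing and the computation yields $\|g^2_\beta f\|_{L^2}^2=\mu_f\big((-1,1)\big)$. To promote this to the exact identity $\mu_f((-1,1))=\|f\|_{L^2}^2$ one must check that $P$ carries no spectral mass at $\pm1$, that is, has no $L^2$-eigenfunction of eigenvalue $\pm1$; on a connected graph such a function would be constant in modulus (for $+1$ one has $\nabla f\equiv0$, hence $f$ constant), hence not in $L^2$ for an infinite graph of infinite volume. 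Throughout, the positivity of the $b_l$ and of $d\mu_f$ is exactly what legitimizes the two applications of Tonelli, so the argument needs no decay or geometric hypothesis whatsoever.
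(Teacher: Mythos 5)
Your proposal follows the same spectral route as the paper's proof --- expand the squared $L^2$ norm, insert $P=\int_{-1}^{1}\lambda\,dE(\lambda)$, and resum the Taylor series of $(1-z)^{-2\beta}$ --- and up to the endpoint discussion it matches the paper's computation step for step, with the added (and correct) justification of the two interchanges via positivity of the $b_l$ and of $dE_{f,f}$. Your scruple about $\lambda=\pm1$ is moreover a genuine catch: the paper's third equality tacitly evaluates $(1-\lambda^2)^{2\beta}\sum_{l\geq 1} b_l\lambda^{2(l-1)}=1$ on all of $[-1,1]$, whereas the termwise integrand vanishes at $\pm1$; what the computation actually yields is $\|g^2_\beta f\|_{L^2}^2=\|f\|_{L^2}^2-\|E(\{-1,1\})f\|_{L^2}^2$, so the exact isometry requires the absence of spectral atoms at $\pm1$, a point the paper's proof passes over in silence.

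However, your patch of this endpoint step is where the gap sits. The lemma assumes only a weighted graph (infinite, connected, locally uniformly finite, per the standing conventions), and nothing in these hypotheses forces $m(\Gamma)=\infty$: take $\Gamma=\N$ with $\mu_{n,n+1}=4^{-n}$, so that $m(\Gamma)<+\infty$; then the constant function $1$ belongs to $L^2(\Gamma)$, $P1=1$ by \eqref{Fen1sum=1}, hence $(I-P^2)^{\beta}1=0$ (for fractional $\beta>0$ use the absolutely convergent series $\sum_k a_k P^{2k}$ with $\sum_k a_k=0$) and $g^2_\beta 1=0\neq\|1\|_{L^2}$. So your appeal to ``infinite volume'' is an unstated extra hypothesis, and it is not removable: in the finite-volume case the isometry itself fails, so no argument can close this step in the stated generality (only the inequality $\|g^2_\beta f\|_{L^2}\leq\|f\|_{L^2}$ survives); the paper's own proof carries the same unacknowledged lacuna. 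Separately, your claim for the eigenvalue $-1$ is asserted rather than proved: from $Pf=-f$ one has $|f|=|Pf|\leq P|f|$ pointwise, while $\langle P|f|,|f|\rangle\leq\||f|\|_{L^2}^2$ by \eqref{Fen1Pcont}, which forces $\langle (I-P)|f|,|f|\rangle=0$, i.e. $\nabla|f|\equiv 0$, whence $|f|$ is constant by connectedness; only then does infinite volume give $f=0$. With these two repairs --- explicitly assuming $m(\Gamma)=\infty$ (equivalently $E(\{-1,1\})=0$), and the $|f|$ argument above --- your proof is complete, and indeed more careful than the paper's.
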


\begin{proof}
Since $\|P\|_2 \leq 1 $, by spectral theory, $P$ can be written as
\[P = \int_{-1}^1 \lambda dE(\lambda).\]
It follows that for all $l\geq 1$, one has
\[(I-P^2)^{\beta} P^{l-1} = \int_{-1}^1 (1-\lambda^2)^{\beta}\lambda^{l-1} dE(\lambda)\]
so that, for all $f\in L^2(\Gamma)$ and $l\geq 1$,
\[\|(I-P^2)^{\beta}P^{l-1}f\|_{L^2}^2 = \int_{-1}^1 (1-\lambda^2)^{2\beta} \lambda^{2(l-1)} dE_{f,f}(\lambda).\]
The $L^2$-norm of   $g^2_\beta f$    can be now computed as
\[\begin{split}
   \|g^2_\beta f\|_{L^2}^2 & = \sum_{l\geq 1}b_l \|(I-P^2)^{\beta}P^{l-1}f\|_{L^2}^2 \\
& = \int_{-1}^1 (1-\lambda^2)^{2\beta} \sum_{l\geq 1} b_l \lambda^{2(l-1)} dE_{f,f}(\lambda) \\
& = \int_{-1}^1 dE_{f,f}(\lambda) \\
& = \|f\|_{L^2}^2
  \end{split}\]
where the third line is a consequence of the definition of $b_l$.
\end{proof}

\begin{lem}
 Let $(\Gamma, \mu)$ be a weighted graph satisfying \eqref{Fen1LB}.

Then $g_\beta$ and $\tilde g_\beta$ are  $L^2(\Gamma)$-bounded. 
\end{lem}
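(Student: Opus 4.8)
The plan is to reduce both statements to pointwise bounds on spectral multipliers, exactly in the spirit of the proof of Lemma~\ref{Fen1casL2}. Since $\|P\|_{2\to2}\le1$ and $P$ is self-adjoint on $L^2(\Gamma)$ by \eqref{Fen1symmetry}, I would write $P=\int_{-1}^1\lambda\,dE(\lambda)$. For $g_\beta$, self-adjointness of $(I-P)^\beta P^{l-1}$ gives
\[
\|g_\beta f\|_{L^2}^2=\sum_{l\ge1}l^{2\beta-1}\|(I-P)^\beta P^{l-1}f\|_{L^2}^2=\int_{-1}^1 m_\beta(\lambda)\,dE_{f,f}(\lambda),\qquad m_\beta(\lambda)=(1-\lambda)^{2\beta}\sum_{l\ge1}l^{2\beta-1}\lambda^{2(l-1)}.
\]
For $\tilde g_\beta$, the identity $\|\nabla h\|_{L^2}^2=\langle(I-P)h,h\rangle$ together with self-adjointness yields $\|\nabla(I-P)^\beta P^{l-1}f\|_{L^2}^2=\langle(I-P)^{2\beta+1}P^{2(l-1)}f,f\rangle$, hence
\[
\|\tilde g_\beta f\|_{L^2}^2=\int_{-1}^1 \tilde m_\beta(\lambda)\,dE_{f,f}(\lambda),\qquad \tilde m_\beta(\lambda)=(1-\lambda)^{2\beta+1}\sum_{l\ge1}l^{2\beta}\lambda^{2(l-1)}.
\]
It therefore suffices to show that $m_\beta$ and $\tilde m_\beta$ are bounded on the spectrum of $P$.

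Next I would record the elementary generating-function asymptotics: for any $\alpha>-1$ one has $\sum_{l\ge1}l^{\alpha}u^{l-1}\simeq(1-u)^{-(\alpha+1)}$ uniformly for $u\in[0,1)$, which follows by comparing the coefficients $l^\alpha$ with the Taylor coefficients of $(1-u)^{-(\alpha+1)}$ (comparable by Stirling, just as the $b_l$ of Lemma~\ref{Fen1casL2} satisfy $b_l\simeq l^{2\beta-1}$). Applying this with $u=\lambda^2$ and $\alpha=2\beta-1>-1$, resp. $\alpha=2\beta>-1$ (here the hypothesis $\beta>0$, resp. $\beta>-\frac12$, is used to keep $\alpha>-1$), gives
\[
m_\beta(\lambda)\simeq(1-\lambda)^{2\beta}(1-\lambda^2)^{-2\beta}=(1+\lambda)^{-2\beta},\qquad \tilde m_\beta(\lambda)\simeq(1+\lambda)^{-(2\beta+1)}.
\]
Both functions are continuous and bounded on any interval $[-1+\delta,1]$, the only possible singularity being at $\lambda=-1$; at $\lambda=1$ the vanishing prefactor exactly cancels the blow-up of the sum.

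The crux --- and the only place where \eqref{Fen1LB} enters --- is to exclude a neighbourhood of $-1$ from $\sigma(P)$. I would symmetrize as in the computation preceding the definition of $\nabla$: using \eqref{Fen1sum=1} and \eqref{Fen1symmetry}, for $f\in L^2(\Gamma)$,
\[
\langle(I+P)f,f\rangle=\tfrac12\sum_{x,y\in\Gamma}p(x,y)\,(f(x)+f(y))^2\,m(x)m(y)\ge 2\sum_{x\in\Gamma}p(x,x)\,f(x)^2\,m(x)^2,
\]
where only the diagonal terms $x=y$ are retained, all other terms being nonnegative. Since $p(x,x)m(x)=\mu_{xx}/m(x)\ge\epsilon$ by \eqref{Fen1LB}, the right-hand side is at least $2\epsilon\|f\|_{L^2}^2$, so that $\sigma(P)\subset[-1+2\epsilon,1]$. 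Combining with the previous step, $m_\beta\lesssim(2\epsilon)^{-2\beta}$ and $\tilde m_\beta\lesssim(2\epsilon)^{-(2\beta+1)}$ on $\sigma(P)$, whence $\|g_\beta f\|_{L^2}\lesssim\|f\|_{L^2}$ and $\|\tilde g_\beta f\|_{L^2}\lesssim\|f\|_{L^2}$.

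The main obstacle is precisely this endpoint $\lambda=-1$: without \eqref{Fen1LB} the multipliers are genuinely unbounded there, since $\sum_l l^{2\beta-1}$ and $\sum_l l^{2\beta}$ diverge for $\beta>0$, resp. $\beta>-\frac12$. This also explains why the $(I-P^2)^\beta$ functional of Lemma~\ref{Fen1casL2}, whose multiplier $(1-\lambda^2)^\beta$ vanishes at \emph{both} endpoints, requires no lower-bound hypothesis. The remaining points --- convergence of the defining series for $\tilde g_\beta$ when $-\frac12<\beta<0$, and the justification of the term-by-term spectral computation --- are routine once the bound on the spectrum is in hand.
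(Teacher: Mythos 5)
Your proof is correct and follows essentially the same route as the paper: spectral calculus for the self-adjoint contraction $P$, the generating-function asymptotics of Lemma \ref{Fen1DSE1} to bound the multipliers by $(1+\lambda)^{-2\beta}$ (resp.\ $(1+\lambda)^{-(2\beta+1)}$), and \eqref{Fen1LB} to keep the spectrum away from $-1$. The only cosmetic differences are that you prove the spectral gap $\sigma(P)\subset[-1+2\epsilon,1]$ directly via the symmetrized quadratic form (where the paper cites Lemma 1.3 of \cite{Dungey}) and that you treat $\tilde g_\beta$ through its own multiplier rather than through the paper's identity $\|\tilde g_\beta f\|_{L^2}=\|g_{\beta+\frac{1}{2}}f\|_{L^2}$, which amounts to the same computation.
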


\begin{proof}
Since $\Gamma$ satisfies \eqref{Fen1LB}, $-1$ is not in the $L^2$ spectrum of $P$  (see for instance Lemma 1.3 in \cite{Dungey}).  Therefore there exists $a>-1$ such that
\[P = \int_{a}^1 \lambda dE(\lambda).\]
Proceeding as in the proof of the Lemma \ref{Fen1casL2}, we obtain
\[\begin{split}
   \|g_\beta f\|_{L^2}^2 & = \int_{a}^1 (1-\lambda)^{2\beta} \sum_{l\geq 1} l^{2\beta-1} \lambda^{2(l-1)} dE_{f,f}(\lambda) \\
& \lesssim \int_{a}^1 (1-\lambda)^{2\beta} \frac{1}{(1-\lambda^2)^{2\beta}} dE_{f,f}(\lambda) \\
& \quad = \int_{a}^1 \frac{1}{(1+\lambda)^{2\beta}} dE_{f,f}(\lambda) \\
& \lesssim \|f\|^2_{L^2}
  \end{split}\]
where, for the second line, we use Lemma \ref{Fen1DSE1}.\par
\noindent For $\tilde g_\beta$, just notice that, by definition of $\nabla$,
\[\|\tilde g_\beta f\|_{L^2} = \|g_{\beta+\frac{1}{2}} f\|_{L^2}.\]
\end{proof}

\subsection{$L^p(\Gamma)$-boundedness of $g_\beta$,   $2<p < + \infty$  }

The proof of the    $L^p$-   boundedness of $g_\beta$ for $p>2$ is based on the following Lemma and Theorem \ref{Fen1interpolation1}. 
The idea of the proof comes from Theorem 1.16 in \cite{BRuss}.

\begin{lem} \label{Fen1lemcalc}
 Let $(\Gamma,\mu)$ be a weighted graph satisfying \eqref{Fen1DV}, \eqref{Fen1LB} and \eqref{Fen1DUE}. 

For all $n \in \N^*$, there exists a constant   $C_n>0$    such that, for all balls $B = B(x_0,r)$ of $\Gamma$, all $j\geq 2$ and all $f$ supported in $C_j(B)$, one has
\[\left\| g_\beta (I-P^{r^2})^n f\right\|_{L^2(B)} \leq C_n 2^{j(\frac{d}{2}-2n)} \left( \frac{V(B)}{V(2^jB)} \right)^{\frac{1}{2}} \|f\|_{L^2}\]
\end{lem}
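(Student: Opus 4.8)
The plan is to bound the square function $g_\beta (I-P^{r^2})^n f$ on the ball $B$ by expanding its definition and estimating each term $(I-P)^\beta P^{l-1}(I-P^{r^2})^n f$ on $B$ using the Gaffney-type inequalities from Theorem \ref{Fen1GTineq}. The key observation is that $f$ is supported in the annulus $C_j(B)$, which is at distance $\simeq 2^j r$ from $B$, so the off-diagonal decay $e^{-c\,d(B,C_j(B))^2/l}$ will supply both the Gaussian factor controlling the sum over $l$ and the geometric gain $2^{j(\frac d2 - 2n)}$. First I would write
\[
\|g_\beta(I-P^{r^2})^n f\|_{L^2(B)}^2 = \sum_{l\geq 1} l^{2\beta-1}\,\bigl\|(I-P)^\beta P^{l-1}(I-P^{r^2})^n f\bigr\|_{L^2(B)}^2,
\]
using that the $\ell^2$-norm in the definition of $g_\beta$ commutes with the $L^2(B)$-norm (both are sums of squares).

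The central difficulty is handling the non-integer power $(I-P)^\beta$ together with the operator $(I-P^{r^2})^n$, since Theorem \ref{Fen1GTineq} provides Gaffney estimates for integer powers $(I-P)^j P^l$. I expect to expand $(I-P^{r^2})^n = (I-P)^n(I+P+\cdots+P^{r^2-1})^n$, so that the factor $(I-P)^n$ can be combined with the integer part of the analysis, while the polynomial factor in $P$ is a bounded combination of powers $P^k$ with $0\leq k \leq n(r^2-1)$ that only shift the time index. For the fractional part $(I-P)^\beta$, the natural approach is to write it via its binomial/Taylor series $(I-P)^\beta = \sum_{m\geq 0} a_m P^m$ with coefficients $a_m = O(m^{-\beta-1})$ summable, reducing everything to estimates on $(I-P)^n P^{l-1+k+m}f$ to which \eqref{Fen1GT2} applies with $(E,F)=(B,C_j(B))$, and then resumming. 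I would then insert the Gaffney bound, taking $x_0$ the center of $B$ so that condition (iv) or the relevant alternative holds, yielding for each term a factor $l^{-n}\,V(x_0,\sqrt l)^{-1/2}\,e^{-c(2^jr)^2/l}\,\|f\|_{L^1(C_j(B))}$, and use $\|f\|_{L^1}\leq V(2^jB)^{1/2}\|f\|_{L^2}$ by Cauchy-Schwarz.

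The main obstacle will be the bookkeeping of the sum over $l$: after inserting the estimates one must evaluate a sum of the form $\sum_{l\geq 1} l^{2\beta-1-2n}\,V(x_0,\sqrt l)^{-1}\,e^{-c\,4^j r^2/l}$. The Gaussian factor forces the effective range $l\gtrsim 4^j r^2$; splitting the sum there and using doubling \eqref{Fen1PDV} to compare $V(x_0,\sqrt l)$ with $V(B)=V(x_0,r)$ (gaining powers of $2^j$ from $\sqrt l \simeq 2^j r$) should produce the volume ratio $V(B)/V(2^j B)$ and the power $2^{j(d-4n)}$ at the level of the squared norm, which becomes $2^{j(\frac d2 - 2n)}$ after taking the square root. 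I would carry out the $l$-summation by estimating $e^{-c\,4^j r^2/l}$ against any desired negative power of $4^j r^2/l$ to absorb the $l^{2\beta-2n}$ factor and ensure convergence, choosing the power large enough (depending on $n$ and $\beta$) to extract the stated decay $2^{j(\frac d2 - 2n)}$; this is where the exponent $2n$ is crucial, and it is the only place where $n$ enters quantitatively.
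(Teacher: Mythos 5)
Your overall skeleton (expanding $(I-P^{r^2})^n=(I-P)^n\bigl(\sum_{s=0}^{r^2-1}P^s\bigr)^n$ to gain $r^{2n}$, applying the Gaffney estimate \eqref{Fen1GT2} with $(E,F)=(B,C_j(B))$, passing to $\|f\|_{L^2}$ by Cauchy--Schwarz and doubling, then summing in $l$) is indeed the paper's. But your treatment of the fractional power contains a genuine gap. Expanding the \emph{positive} power $(I-P)^{\beta}=\sum_m a_m P^m$ with signed, summable coefficients $a_m=O(m^{-\beta-1})$ and then applying the triangle inequality reduces everything to $\|(I-P)^nP^{l-1+m+s}f\|_{L^2(B)}\lesssim (l+m+s)^{-n}e^{-cK/(l+m+s)}\cdot(\dots)$, with $K=4^jr^2$ --- but this throws away the smoothing $\|(I-P)^{\beta}P^{l}\|_{2\to 2}\lesssim l^{-\beta}$, which is precisely what makes the $l$-summation close. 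Concretely, in the regime $l\geq K$ the Gaussian factor is of order $1$ and absorbs nothing (your step ``estimate $e^{-cK/l}$ against any desired negative power of $K/l$'' fails there), so after your resummation the tail of the $l$-sum is $\sum_{l>K}l^{2\beta-1}\,l^{-2n}$: it \emph{diverges} when $n\leq\beta$ (note the lemma is invoked with $n=1$ for arbitrary $\beta>0$ in the weak $(1,1)$ part of the paper), and when $n>\beta$ it yields $K^{2\beta-2n}$ instead of the required $K^{-2n}$, i.e.\ a final bound of order $4^{j(\beta-n)}r^{2\beta}$ in place of $4^{-jn}$ --- losing part of the $j$-decay and, worse, the uniformity in the radius $r$. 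With the true decay $(l)^{-\beta-n}$ the tail would be $\sum_{l>K}l^{-1-2n}\simeq K^{-2n}$, as the statement demands.

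The paper's proof is built to avoid exactly this. Setting $\eta$ to be the integer with $\eta+1\geq\beta>\eta$, it factors $(I-P)^{\beta}=(I-P)^{1+\eta}(I-P)^{\beta-1-\eta}$: the \emph{integer} power $(I-P)^{1+\eta}$ stays inside the Gaffney estimate, producing the stronger decay $(l+k+s)^{-(1+\eta+n)}$, and only the \emph{negative} fractional power is expanded, with nonnegative coefficients $a_k\simeq k^{\eta-\beta}$ (Lemma \ref{Fen1DSE1}). That series is not absolutely summable, so two further ingredients, absent from your plan, are needed: the reverse H\"older lemma for sequences (Proposition \ref{Fen1l2l1} and Lemma \ref{Fen1L2L1}), which converts the $\ell^2$-norm in $l$ into the $\ell^1$-type sum $\sum_l \frac1l A_l^{k,r,j}$ inside the $k$-sum, and the Beta-function convolution bound $\sum_{k=0}^{m-1}a_k(m-k)^{\beta-\eta-1}\lesssim 1$, i.e.\ \eqref{Fen1ght}, which is where the fractional decay $l^{-\beta}$ is recaptured despite the divergent coefficient series. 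The remaining sum $\sum_m\sup_{0\le s\le nr^2}(m+s)^{-(1+n)}e^{-cK/(m+s)}\lesssim K^{-n}$ then cancels $r^{2n}$ and yields $2^{-2jn}$, the factor $2^{jd/2}(V(B)/V(2^jB))^{1/2}$ being inserted via doubling. To repair your argument you would have to adopt this splitting (or some equivalent device preserving $\|(I-P)^{\beta}P^l\|\lesssim l^{-\beta}$ inside the square function); the direct binomial expansion of $(I-P)^{\beta}$ cannot be salvaged by sharper bookkeeping.
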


\begin{proof} First fix $n\in \N^*$. Denote by $\eta$ the only integer such that $\eta + 1 \geq \beta > \eta \geq 0$. We use the fact that
\[(I-P)^{\beta-1-\eta}  = \sum_{k\geq 0} a_k P^k\]
   where $\sum a_k z^k$ is the Taylor series    of the function $(1-z)^{\beta-\eta -1}$. 
Note that the equality   holds on $L^2(\Gamma)$ by    spectral theory and \eqref{Fen1Pcont}. Moreover, notice that if $\beta$ is an integer, then $a_k = \delta_0(k)$.

By the generalized Minkowski inequality, we get
\[ \begin{split}
    \left\| g_\beta (I-P^{r^2})^n f\right\|_{L^2(B)} & \leq \sum_{k\geq 0} a_k  \left( \sum_{l\geq 1} l^{2\beta-1} \left\| (I-P)^{1 + \eta}P^{k+l-1}(I-P^{r^2})^n f\right\|^2_{L^2(B)} \right)^{\frac{1}{2}}.
   \end{split}\]

We divide the sequel of the proof in 3 steps.
\begin{itemize}
 \item[{\bf 1-}]{\bf Estimate of the inner term}

Notice that $I-P^{r^2} = (I-P)\sum_{  s=0  }^{r^2-1} P^s$. Then, we get
\[\|(I-P)^{1+\eta}P^{k+l-1}(I-P^{r^2})^n f\|_{L^2(B)} \leq r^{2n} \sup_{s\in \bb 0,nr^2\bn} \|(I-P)^{1+\eta+n}P^{k+l+s-1 } f\|_{L^2(B)}\]
We   now estimate    the terms $\|(I-P)^{1+\eta}P^{k+l+s-1} f\|_{L^2(B)}$. For $0\leq s \leq nr^2$, since $f$ is supported in  $C_j(B)$ and by Remark \ref{Fen1pl-1}, one has,
\[\begin{split}
   \|(I-P)^{  1+n+\eta  }& P^{k+l-1 + s} f\|_{L^2(B)} \\ & \lesssim \frac{1}{(l+k+s)^{1+\eta+n}} \exp\left(-c\frac{(2^j-1)^2r^2}{l+k+s} \right) \|f\|_{L^2(C_j(B))} \\
& \lesssim \frac{1}{(l+k+s)^{1+\eta+n}} \exp\left(-c\frac{4^jr^2}{l+k+s} \right) \|f\|_{L^2(C_j(B))} \\
& \lesssim 2^\frac{jd}{2} \left(\frac{V(B)}{V(2^{j}B)} \right)^{\frac{1}{2}} \frac{1}{(l+k+s)^{1+\eta+n}} \exp\left(-c\frac{4^jr^2}{l+k+s} \right) \|f\|_{L^2}
  \end{split}\]
where the first line follows from \eqref{Fen1GT2}   and Cauchy-Schwarz    and the third one from \eqref{Fen1DV}.\par
\noindent Consequently, we obtain
\begin{equation} \begin{split} 
\|(I-P)^{1+\eta}P^{k+l-1}& (I-P^{r^2})^n f\|_{L^2(B)} \\ & \lesssim r^{2n} 2^{\frac{jd}{2}} \sup_{s\in \bb 0,nr^2\bn}\left[  \dfrac{\exp\left( -c \frac{(  4^j r^2  }{l+k+s}\right)}{(l+k+s)^{(1+\eta+n)}}\right] \left(\frac{V(B)}{V(2^jB)}\right)^{\frac{1}{2}} \|f\|_{L^2} \\
& \quad \leq  r^{2n} l^{-\eta} 2^{\frac{jd}{2}} \sup_{ s\in \bb 0,nr^2\bn }\left[  \dfrac{\exp\left( -c \frac{4^j r^2}{l+k+s}\right)}{(l+k+s)^{1+n}}\right] \left(\frac{V(B)}{V(2^jB)}\right)^{\frac{1}{2}} \|f\|_{L^2}.
\end{split}\end{equation}

\item[{\bf 2-}]{\bf   Reverse H\"older estimates   }\par
\noindent According to Proposition \ref{Fen1l2l1}    below   , the set of sequences $\{A^{k,r,j}_l, \, k \in \N, \, r \in \N^*, \, j\geq 2 \}$, where
\[A_l^{k,r,j} = l^{\beta-\eta} \sup_{s\in \bb 0,nr^2\bn} \left\{ \dfrac{\exp\left( -c \frac{4^j r^2}{l+k+s}\right)}{(l+k+s)^{1+n}} \right\},\]
is included in 
\[E_M = \left\{ (a_l)_{l\geq 1}, \ \forall l\in \N^*, \, 0 \leq a_l \leq M \sum_{k\in \N^*} \frac{1}{k} a_k \right\}\]  
for some $M>0$.  Therefore, Lemma \ref{Fen1L2L1} below yields   

\[\begin{split}
   V(B)^{-\frac{1}{2}} \left\| g_\beta (I-P^{r^2})^n f\right\|_{L^2(B)} & \lesssim r^{2n} 2^{\frac{jd}{2}}  V(2^jB)^{-\frac{1}{2}}\|f\|_{L^2} \sum_{k \geq 0} a_k \left( \sum_{l\geq 1} \frac{1}{l} (A_l^{k,r,j})^2\right)^{\frac{1}{2}} \\
& \lesssim r^{2n} 2^{\frac{jd}{2}} V(2^jB)^{-\frac{1}{2}}\|f\|_{L^2} \sum_{k \geq 0} a_k  \sum_{l\geq 1} \frac{1}{l} A_l^{k,r,j}.
 \end{split}\]

\item[{\bf 3-}]{\bf End of the calculus}\par

\noindent Note, thanks to Lemma \ref{Fen1DSE1}, that, when $\beta$ is not an integer, \label{Fen1RiemannIntegral}
\[ \begin{split} \sum_{k = 0}^{m-1} a_k   (m-k)  ^{\beta-1-\eta} &\lesssim 1 + \sum_{k = 1}^{m-1} k^{\eta -\beta}   (m-k)  ^{\beta-\eta-1} \\
& \quad = 1 + \frac{1}{m} \sum_{k=1}^{m-1} \left(\frac{k}{m}\right)^{\eta -\beta} \left(1-\frac{k}{m}\right)^{\beta-1-\eta} \\
& \quad \xrightarrow[m\to +\infty]{} 1 + \int_0^1 t^{\eta -\beta} (1-t)^{\beta-1-\eta}dt  < +\infty.
    \end{split} \]
The integral converges since    $\eta -\beta>-1$ and  $\beta-1-\eta>-1$. 
It follows that
\begin{equation} \label{Fen1ght}\sum_{k = 0}^{  m-1  } a_k   (m-k)  ^{\beta-\eta-1} \lesssim 1.\end{equation}
Since $a_k = \delta_0(k)$ and $\beta-1-\eta = 0$ when $\beta$ is an integer, the result above holds for all $\beta>0$.\par

   Using the expression of $A^{k,r,j}_l$,    we have {\small
\[\begin{split}
   V(B)^{-\frac{1}{2}} \left\| g_\beta (I-P^{r^2})^n f\right\|_{L^2(B)} & \lesssim V(2^jB)^{-\frac{1}{2}} r^{2n} 2^{\frac{jd}{2}} \|f\|_{L^2} \sum_{m=1}^{+\infty}   \sup_{s\in \bb 0,nr^2\bn} \left\{ \dfrac{\exp\left( -c \frac{4^j r^2}{m+s}\right)}{(m+s)^{1+n}} \right\}
 \end{split}\] }
  But, for some $c^{\prime}\in (0,c)$,
\[ \label{Fen1finRH}
\begin{split}
\sum_{m=1}^{+\infty} & \sup_{s\in \bb 0,nr^2\bn} \left\{ \dfrac{\exp\left( -c \frac{4^j r^2}{m+s}\right)}{(m+s)^{1+n}} \right\}  \\
& =  \frac 1{(4^jr^2)^{1+n}} \sum_{m=1}^{+\infty} \sup_{s\in \bb 0,nr^2\bn} \exp\left( -c \frac{4^j r^2}{m+s}\right)\left(\frac{4^jr^2}{m+s}\right)^{1+n}\\
&  \lesssim  \frac 1{(4^jr^2)^{1+n}} \sum_{m=1}^{4^jr^2} \exp\left( -c^{\prime} \frac{4^j r^2}{m+nr^2}\right)
 +  \frac 1{(4^jr^2)^{1+n}} \sum_{m=4^jr^2+1}^{+\infty} \left(\frac{4^jr^2}{m}\right)^{1+n}\\
&  \lesssim 4^{-jn} r^{-2n}.
\end{split}
\]

   The proof of Lemma \ref{Fen1lemcalc} is now complete.
\end{itemize}
\end{proof}

\begin{proof} 
   The proof of the $L^p$-boundedness of $g_\beta$ for $p>2$    is analogous to the one found in \cite{BRuss}, Theorem 1.16, when $2<p<+\infty$. Let us give the argument for the completeness.
We are aiming to use Theorem \ref{Fen1interpolation1}. It is enough to verify the validity of the assumptions \eqref{Fen1inter1} and \eqref{Fen1inter2}. We choose $A_B = I-(I-P^{r^2})^n$, where $r$ is the radius of $B$ and $n> \frac{d}{4}$.

\begin{itemize}
 \item[{\bf Proof of}] {\bf \eqref{Fen1inter1}} \par
\noindent We need to check that, for all $f\in L^2$, for all $x_0\in \Gamma$ and all balls $B \ni x_0$, one has
\[\frac{1}{V(B)^{\frac{1}{2}}} \|g_\beta (I-P^{r^2})^n f\|_{L^2(B)} \lesssim \left( \mathcal M |f|^2\right)^{\frac{1}{2}}(x_0).\]
We can decompose
\[f =  \sum_{j\geq 1} f\1_{C_j(B)} =: \sum_{j\geq 1} f_j.\]
First, since $g_\beta$ and $I-A_B = (I-P^{r^2})^n$ are   $L^2(\Gamma)$-bounded and by    the doubling property,
\[  \frac{1}{V(B)^{\frac{1}{2}}} \|g_\beta (I-P^{r^2})^n f_1\|_{L^2(B)} \lesssim \frac{1}{V(B)^{\frac{1}{2}}} \|f\|_{L^2(4B)} \lesssim \left( \mathcal M |f|^2\right)^{\frac{1}{2}}(x_0).   \]
For $j\geq 2$, Lemma \ref{Fen1lemcalc} provides:
\[
 \begin{split}
     \frac{1}{V(B)^{\frac{1}{2}}} \|g_\beta (I-P^{r^2})^n f_j\|_{L^2(B)}    & \lesssim 2^{j(\frac{d}{2}-2n)} \frac{1}{V(2^jB)^{\frac{1}{2}}} \|f_j\|_{L^2} \\
& \lesssim 2^{j(\frac{d}{2}-2n)} (\mathcal M|f|^2)^{\frac{1}{2}}(x_0).
 \end{split}
\]
Since $ n> \frac{d}{4}$, we can sum  in  $j\geq 1$, which gives the result.
\item[{\bf Proof of}] {\bf \eqref{Fen1inter2}} \par
\noindent What we have to show is that, for all $m\in \bb 1,n\bn$, all $f\in L^2(\Gamma) \cap L^\infty(\Gamma)$, all $x_0\in \Gamma$ and all balls $B\ni x_0$, one has,
\[\|g_\beta P^{mr^2} f\|_{L^\infty(B)} \lesssim (\mathcal M|g_\beta f|^2)^{\frac{1}{2}}(x_0).\]
First, since $\sum_{y\in G} p(x,y)m(y) = 1$, and by the use of Cauchy-Schwarz inequality, we obtain, for all $x\in \Gamma$ and $h\in L^2(\Gamma)$,
\[\left| P^{mr^2} h(x)\right| \leq \left( P^{mr^2} |h|^2(x)\right)^{\frac{1}{2}}.\]
Hence, it follows that for all $l\geq 1$
\[\left| P^{mr^2} (I-P)^{\beta}P^{l-1} f(x)\right|^2 \leq P^{mr^2}  |(I-P)^{\beta}P^{l-1} f|^2(x),\]
   so that, summing up  in  $l$,
\[\begin{split}
   (g_\beta P^{mr^2} f)(x)^2 &  =\sum_{l\geq 1} l^{2\beta-1} |P^{mr^2} (I-P)^{\beta}P^{l-1} f(x)|^2 \\
& \leq P^{mr^2} \left( \sum_{l\geq 1}    l^{2\beta-1} |(I-P)^{\beta}P^{l-1} f|^2\right)   (x) \\
& \quad = P^{mr^2} \left( |g_\beta f|^2 \right)(x) \\
& \lesssim \mathcal M \left( |g_\beta f|^2 \right)(x_0),
  \end{split}\]
   where the last line is due to \eqref{Fen1dominmax}.   
Here ends the proof of \eqref{Fen1inter2}, and the one of the    $L^p$-boundedness of $g_\beta$ for $p\in (2,+\infty)$.   
\end{itemize}
\end{proof}
\subsection{$L^p$-boundedness of $\tilde g_\beta$,    $2 \leq p < p_0$}

\begin{lem} \label{Fen1lemcalc3}
 Let $(\Gamma,\mu)$ be a weighted graph satisfying \eqref{Fen1DV}, \eqref{Fen1LB} and \eqref{Fen1DUE}. 

For all $n \in \N^*$, there exists a constant $C_n$ such that, for all balls $B = B(x_0,r)$ of $\Gamma$, all $j\geq 2$ and all $f$ supported in $C_j(B) = 2^{j+1}B \backslash 2^j B$, we get
\[\left\| \tilde g_\beta (I-P^{r^2})^n f\right\|_{L^2(B)} \leq C_n 2^{j(\frac{d}{2}-2n)} \left( \frac{V(B)}{V(2^jB)} \right)^{\frac{1}{2}} \|f\|_{L^2}.\]
\end{lem}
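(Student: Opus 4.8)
The plan is to reproduce the proof of Lemma~\ref{Fen1lemcalc} almost verbatim, the single structural change being that the scalar Gaffney estimate \eqref{Fen1GT2} is everywhere replaced by its gradient form (the first inequality of \eqref{Fen1GGT2}). I fix $n\in\N^*$, let $\eta$ be the integer and $(a_k)_{k\ge0}$ the Taylor coefficients of $(1-z)^{\beta-1-\eta}$ attached to $\beta$ as in Lemma~\ref{Fen1lemcalc}, so that $(I-P)^{\beta}=\sum_{k\ge0}a_k\,(I-P)^{1+\eta}P^{k}$ on $L^2(\Gamma)$. Using $I-P^{r^2}=(I-P)\sum_{s=0}^{r^2-1}P^s$ and the generalized Minkowski inequality, I would reduce the bound to a weighted $\ell^2$-in-$l$ estimate for the inner terms $\|\nabla(I-P)^{1+\eta}P^{k+l-1}(I-P^{r^2})^{n}f\|_{L^2(B)}$, with $f$ supported in $C_j(B)$.

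For these inner terms I would extract, exactly as in Step~1 of Lemma~\ref{Fen1lemcalc}, the factor $(I-P)^{n}$ out of $(I-P^{r^2})^{n}$ together with $r^{2n}\sup_s$ (over $0\le s\le nr^2$), apply \eqref{Fen1GGT2} with $E=B$, $F=C_j(B)$, $x_0=x_B$ (condition~(i) of Theorem~\ref{Fen1GTineq} holding once $j\ge2$), then Cauchy--Schwarz and \eqref{Fen1DV}. The outcome is identical to Step~1 of Lemma~\ref{Fen1lemcalc} up to one point: the gradient estimate gains an extra half-power of time decay, producing $(l+k+s)^{-(1+\eta+n+1/2)}$ in place of $(l+k+s)^{-(1+\eta+n)}$. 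The key observation is that this extra half-power offsets the heavier weight of $\tilde g_\beta$: when $\beta>0$ (so $\eta\ge0$), pulling $(l+k+s)^{-(2\eta+1)}\le l^{-(2\eta+1)}$ and using the weight $l^{2\beta}$ in place of $l^{2\beta-1}$ reproduces verbatim the sequences $A_l^{k,r,j}=l^{\beta-\eta}\sup_s\{(l+k+s)^{-(1+n)}\exp(-c\,4^jr^2/(l+k+s))\}$ of Lemma~\ref{Fen1lemcalc}. Steps~2 and 3 --- membership in $E_M$, the reverse-Hölder reduction via Proposition~\ref{Fen1l2l1} and Lemma~\ref{Fen1L2L1}, the beta-integral bound \eqref{Fen1ght}, and the final geometric summation giving $\lesssim 4^{-jn}r^{-2n}$ --- then apply unchanged and yield the announced $2^{j(\frac{d}{2}-2n)}(V(B)/V(2^jB))^{1/2}$.

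The one genuinely new difficulty is the part of the range $\beta\in(-\frac12,0]$ permitted for $\tilde g_\beta$, where no nonnegative integer $\eta$ satisfies $\eta<\beta$ and the pull-out $(l+k+s)^{-(2\eta+1)}\le l^{-(2\eta+1)}$ is illegal. There I would instead take $\eta=-1$, so that $(I-P)^{1+\eta}=I$ and the full power $(I-P)^{\beta}=\sum_k a_kP^k$ is expanded ($\beta-1-\eta=\beta\in(-1,0)$, the regime already handled in Lemma~\ref{Fen1lemcalc}), letting the gradient in $\nabla(I-P)^{n}P^{k+l-1+s}$ supply all the smoothing. I would then keep the half-power inside the time decay rather than pulling it out, setting $A_l^{k,r,j}=l^{\beta+1/2}\sup_s\{(l+k+s)^{-(n+1/2)}\exp(-c\,4^jr^2/(l+k+s))\}$; the concluding Riemann sum is now $\sum_{k=0}^{m-1}a_k(m-k)^{\beta-1/2}$, controlled by $\int_0^1 t^{-1-\beta}(1-t)^{\beta-1/2}\,dt$, which converges precisely because $\beta\in(-\frac12,0)$ (at $\beta=0$, $a_k=\delta_0(k)$ and the sum reduces to a single term), after which the $m$-summation again collapses to $\sim(4^jr^2)^{-n}$. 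The hard part will therefore not be Step~1 but checking that this modified sequence still lies in the class $E_M$ of Proposition~\ref{Fen1l2l1}, so that Lemma~\ref{Fen1L2L1} applies uniformly for all $\beta>-\frac12$; this is the rigorous counterpart of the heuristic $\nabla(I-P)^{\beta}\sim(I-P)^{\beta+1/2}$ with exponent $\beta+\frac12>0$.
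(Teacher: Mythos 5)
Your proposal is correct and follows essentially the same route as the paper: the same Minkowski/Taylor-expansion reduction with \eqref{Fen1GGT2} in place of \eqref{Fen1GT2}, the same case split at $\eta=-1$ for $\beta\in(-\frac12,0]$, the same modified sequence (the paper's $B_l^{k,r,j}$), the same beta-integral bound $\sum_{k=0}^{m-1}a_k(m-k)^{\beta-\frac12}\lesssim m^{-1/2}$, and the same final summation giving $4^{-jn}r^{-2n}$. The one step you flag but do not carry out --- membership of the modified sequence in $E_M$ --- is exactly what the paper disposes of in Remark \ref{Fen1careful}, and your justification (the exponent $\beta+\frac12\in(0,1]$ keeps the $\rho_\lambda$-stability argument of Proposition \ref{Fen1l2l1} valid) is the correct one.
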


\begin{proof} (Lemma \ref{Fen1lemcalc3})
   
\noindent The proof is analogous to the one of Lemma \ref{Fen1lemcalc}, and we only indicates the main differences.   

 \noindent Define $\eta$ as in the proof of Lemma \ref{Fen1lemcalc}.    By the use of the generalized Minkowski inequality, we get
\[ \begin{split}
    \left\| \tilde g_\beta (I-P^{r^2})^n f\right\|_{L^2(B)} & \leq \sum_{k\geq 0} a_k  \left( \sum_{l\geq 1} l^{2\beta} \left\|\nabla(I-P)^{1 + \eta}P^{k+l-1}(I-P^{r^2})^n f\right\|^2_{L^2(B)} \right)^{\frac{1}{2}}.
   \end{split}\]

\noindent We now distinguish the cases $\beta>0$ ( {\it i.e. }  $\eta \in \N$) and $-\frac{1}{2} < \beta \leq 0$ ( {\it i.e. }  $\eta = -1$).

{\bf First case:} $\mathbf{\beta >0}$.    In this case, the proof is analogous to the one in Lemma \ref{Fen1lemcalc}, using \eqref{Fen1GGT2} instead of \eqref{Fen1GT2}.  

{\bf Second case:} $\mathbf{-\frac{1}{2}<\beta \leq 0.}$
\begin{itemize}
\item[$1.$]
By \eqref{Fen1GGT2}, {\small
\[\|\nabla P^{k+l-1} (I-P^{r^2})^n f\|_{L^2(B)} \lesssim 2^{\frac{jd}{2}} r^{2n} \hspace{-3pt} \sup_{s\in \bb 0,nr^2\bn} \left[  \dfrac{\exp\left( -c \frac{4^j r^2}{l+k+s}\right)}{(l+k+s)^{n+\frac{1}{2}}}\right] \left(\frac{V(B)}{V(2^jB)}\right)^{\frac{1}{2}} \|f\|_{L^2(C_j(B))}.  \]
}

\item[$2.$]
Define now  $B_l^{k,r,j}$    by
\[  B_l^{k,r,j} = l^{\beta+\frac{1}{2}}  \sup_{s\in \bb 0,nr^2\bn} \left\{ \dfrac{\exp\left( -c \frac{4^j r^2}{l+k+s}\right)}{(l+k+s)^{n+\frac{1}{2}}} \right\}\]
Remark \ref{Fen1careful} below therefore shows 
\begin{equation} \label{Fen1truc1} \begin{split}
   V(B)^{-\frac{1}{2}} \left\| \tilde g_\beta (I-P^{r^2})^n f\right\|_{L^2(B)} & \lesssim     2^{\frac{jd}{2}}V(2^jB)^{-\frac{1}{2}}\|f\|_{L^2} r^{2n}  \sum_{k \geq 0} a_k \left( \sum_{l\geq 1} \frac{1}{l} (B_l^{k,r,j})^2\right)^{\frac{1}{2}} \\
& \lesssim     2^{\frac{jd}{2}} V(2^jB)^{-\frac{1}{2}}\|f\|_{L^2}  r^{2n} \sum_{k \geq 0} a_k  \sum_{l\geq 1} \frac{1}{l} B_l^{k,r,j}.   
 \end{split} \end{equation} 
\item[$3.$]
Thanks to Lemma \ref{Fen1DSE1}, one has
\[ \begin{split} \sum_{k = 0}^{m-1} a_k (m-k)^{\beta-\frac{1}{2}} &\lesssim  m^{\beta-\frac{1}{2}} + \sum_{k = 1}^{m-1} k^{-\beta-1} (m-k)^{\beta-\frac{1}{2}} \\
& \lesssim   \frac{1}{\sqrt m} \int_0^1 t^{-\beta-1} (1-t)^{\beta-\frac{1}{2}}dt,
    \end{split} \]
if $\beta \in (-\frac{1}{2},0)$. If $\beta =0$, we have $a_k = \delta_0(k)$,    so that, in both cases,   
\begin{equation} \label{Fen1truc2} \sum_{k = 0}^{m-1} a_k (m-k)^{\beta-\frac{1}{2}} \lesssim \frac{1}{\sqrt m}.\end{equation} 
   Using \eqref{Fen1truc1} and \eqref{Fen1truc2}, one obtains   
\[\begin{split}
   V(B)^{-\frac{1}{2}} \left\| g_\beta (I-P^{r^2})^n f\right\|_{L^2(B)} & \\
 \lesssim 2^{\frac{jd}{2}} V(2^jB)^{-\frac{1}{2}} \|f\|_{L^2}  &  r^{2n} \sum_{m=1}^{+\infty}  \sup_{s\in \bb 0,nr^2\bn}  \left\{ \dfrac{\exp\left( -c \frac{4^j r^2}{m+s}\right)}{\sqrt m (m+s)^{n+\frac{1}{2}}} \right\}. 
 \end{split}\]
However, one has,
\[\begin{split}
   \sum_{m=1}^{+\infty}   \sup_{s\in \bb 0,nr^2\bn} & \left\{ \dfrac{\exp\left( -c \frac{4^j r^2}{m+s}\right)}{\sqrt m (m+s)^{n+\frac{1}{2}}} \right\} \\
& = \frac{1}{(4^jr^2)^{n+1}} \sum_{m=1}^{+\infty}  \frac{2^jr}{\sqrt{m}} \sup_{s\in \bb 0,nr^2\bn}   \left\{ \left( \frac{4^jr^2}{m+s}\right)^{n+\frac{1}{2}} \exp\left( -c \frac{4^j r^2}{m+s}\right) \right\}\\
& \lesssim \frac{1}{(4^jr^2)^{n+1}}  \sum_{m=1}^{4^jr^2}  \frac{2^jr}{\sqrt{m}} + \frac{1}{(4^jr^2)^{n+1}} \sum_{m=4^jr^2+1}^{+\infty}  \left(\frac{4^jr^2}{m}\right)^{n+1} \\
& \lesssim 4^{-jn} r^{-2n}.
  \end{split}\]

It yields the desired result
\[V(B)^{-\frac{1}{2}} \left\| g_\beta (I-P^{r^2})^n f\right\|_{L^2(B)} \lesssim  2^{j(\frac{d}{2}-2n)} V(2^jB)^{-\frac{1}{2}} \|f\|_{L^2} \]
\end{itemize}
\end{proof}

\begin{proof} (   $L^p$-boundedness    of $\tilde g_\beta$ for $2<p<p_0$)\par

\noindent We use Theorem \ref{Fen1interpolation1} as well.    The proof of \eqref{Fen1inter1} for $\tilde g_\beta$ is analogous to the corresponding one for $g_\beta$, by use of Lemma \ref{Fen1lemcalc3}.
Let us now check \eqref{Fen1inter2}. We argue as in \cite{ACDH} pp 932-936, using $(P_2)$ and ($GG_{p_0}$).    

We want to prove    that,    for all $2<p<p_0$, there exists $C_n$ such that for all balls $B\subset \Gamma$ of radius $r$, all $m\in \bb 0,n\bn$, all functions $f$ on $\Gamma$ and $x\in B$,
\begin{equation}
  \frac{1}{V^{\frac 1p}(B)}    \left\| \tilde g_\beta P^{2mr^2}f \right\|_{L^p(B)} \leq  
C_n \left(\mathcal M(|\tilde g_\beta f|^2)\right)^{\frac{1}{2}}(x).
\end{equation}

Let $f\in L^2(\Gamma)$. Since $P^l 1 \equiv 1$ for all $l\in \N$, we may write, if $g^l = (I-P)^{\beta} P^{l-1} f$,
\[\nabla P^{mr^2} (I-P)^{\beta} P^{l-1} f = \nabla P^{mr^2} \left(g^l - \left(g^l\right)_{4B}\right).\]
Write $\ds g^l - \left(g^l\right)_{4B} = \sum_{i\geq 1} g^l_i$ with $g^l_i = \left(g^l - \left(g^l\right)_{4B}\right)\1_{C_i(B)}$. 
For $i=1$,    Lemma $4.2$ in \cite{BRuss} and $(P_2)$ yield   
\[\begin{split}
 \left( \sum_{l\geq 1} l^{2\beta} \left( \frac{1}{V^{\frac{1}{p}}(B)} \|\nabla P^{mr^2} g_1^l\|_{L^p(B)}\right)^2 \right)^{\frac{1}{2}} & \lesssim \frac{1}{rV(4B)^{\frac{1}{2}}} \left( \sum_{l\geq 1} l^{2\beta} \|g_1^l\|^2_{L^2(4B)}\right)^{\frac{1}{2}} \\
& \lesssim \left( \frac{1}{V(8B)} \sum_{l\geq 1} l^{2\beta} \sum_{ y\in 8B }    |\nabla g_1^l(y)|^2m(y)   \right)^{\frac 12}\\
& \lesssim     \mathcal M_2 \left( \tilde g_\beta f \right)(x).   
\end{split}\]
For $i\geq 2$,    Lemma 4.2 in \cite{BRuss} shows that   
\[\left( \sum_{l\geq 1} l^{2\beta} \left( \frac{1}{V^{\frac{1}{p}}(B)} \|\nabla P^{mr^2} g_i^l\|\right)^2 \right)^{\frac{1}{2}}
\lesssim \frac{e^{-c4^i}}{r}  \left( \frac{1}{V(2^{i+1}B)} \sum_{l\geq 1} l^{2\beta} \|g^l_i\|_{L^2(C_i(B))}^2 \right)^{\frac{1}{2}}.
\]
But for all $l\geq 1$,
\[\begin{split}   \|g_i^l\|_{L^2(C_i(B))} & \leq \|g^l - \left(g^l\right)_{4B}\|_{L^2(2^{i+1}B)} \\
& \leq \|g^l - \left(g^l\right)_{2^{i+1}B}\|_{L^2(2^{i+1}B)} + V(2^{i+1}B)^{\frac{1}{2}} \sum_{j=2}^i |\left(g^l\right)_{2^{j}B}-\left(g^l\right)_{2^{j+1}B}|.
\end{split}\]
   For all $j\in \bb 2,i\bn$, $(P_2)$ implies
\[\begin{split} |\left(g^l\right)_{2^{j}B}-\left(g^l\right)_{2^{j+1}B}| & \lesssim  \frac{1}{V(2^{j+1}B)^{\frac{1}{2}}}  \|g^l - \left(g^l\right)_{2^{j+1}B}\|_{L^2(2^{j+1}B)} \\
   &  \lesssim 2^{j+1}r \frac{1}{V(2^{j+1}B)^{\frac{1}{2}}} \|\nabla g^l\|_{L^2(2^{j+1}B)}, \end{split}\]
while
\[
\|g^l - \left(g^l\right)_{2^{i+1}B}\|_{L^2(2^{i+1}B)}\lesssim 2^{i+1}r\left\Vert \nabla g^l\right\Vert_{L^2(2^{i+1}B)},
\]
so that
\[
 \|g_i^l\|_{L^2(C_i(B))}\lesssim \sum_{j=2}^{i} 2^{j}r\frac{V(2^{i+1}B)^{\frac{1}{2}}}{V(2^{j+1}B)^{\frac{1}{2}}}\left\Vert \nabla g^l\right\Vert_{L^2(2^{j+1}B)}.
 \]
As a consequence, by the Minkowski inequality, {\small
\[
\begin{array}{lll}
\displaystyle \left(\frac{1}{V(2^{i+1}B)}\sum_{l\geq 1} l^{2\beta} \|g^l_i\|_{L^2(C_i(B))}^2 \right)^{\frac{1}{2}} & \lesssim & \displaystyle \sum_{j=2}^{i} 2^jr\frac{1}{V(2^{j+1}B)^{\frac{1}{2}}}\left(\sum_{l\geq 1} l^{2\beta} \|\nabla g^l\|_{L^2(2^{j+2}B)}^2\right)^{\frac 12}\\
& \lesssim & \displaystyle \sum_{j=2}^{i} 2^jr {\mathcal M}_2\tilde{g}_{\beta}f(x)\\
& \lesssim & \displaystyle 2^{i}r{\mathcal M}_2\tilde{g}_{\beta}f(x).
\end{array}
\]}
\end{proof}

\subsection{ $L^p$-boundedness  of $g_\beta$  and $\tilde g_\beta$, $1 < p \leq 2$}

The proof of the    $L^p$-boundedness  of $g_\beta$ for $1<p<2$ relies on Theorem \ref{Fen1interpolation2}, via the following lemma:   

\begin{lem} \label{Fen1lemcalc2}
 Let $(\Gamma,\mu)$ be a weighted graph satisfying \eqref{Fen1DV}, \eqref{Fen1LB} and \eqref{Fen1DUE}. 

For all $n \in \N^*$, there exists a constant $C_n$ such that, for all balls $B = B(x_0,r)$ of $\Gamma$, all $j\geq 2$ and all $f \in L^{1}(\Gamma)$ supported in $B$, we get
\[\left\| g_\beta (I-P^{r^2})^n f\right\|_{L^2(C_j(B))} \leq C_n 2^{-2jn} \frac{V(2^jB)^{\frac{1}{2}}}{V(B)} \|f\|_{L^{1}}.\]
\end{lem}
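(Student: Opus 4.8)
The plan is to treat Lemma \ref{Fen1lemcalc2} as the off-diagonal companion of Lemma \ref{Fen1lemcalc}: the roles of $B$ and $C_j(B)$ are swapped (now $f$ lives in $B$ and we estimate on the far annulus $C_j(B)$), and the right-hand side carries an $L^1$ norm rather than an $L^2$ norm. Accordingly I would reproduce the three-step scheme of Lemma \ref{Fen1lemcalc}, the only genuine change being that the kernel estimate is now fed through the $L^1\to L^2$ Gaffney inequality \eqref{Fen1GT2} applied to the pair $(E,F)=(C_j(B),B)$, precisely the configuration singled out in the remark following Theorem \ref{Fen1GTineq}. First, with $\eta$ the integer determined by $\eta+1\geq \beta>\eta\geq 0$ as in Lemma \ref{Fen1lemcalc}, I factor $(I-P)^\beta=(I-P)^{1+\eta}\sum_{k\geq 0}a_kP^k$ and apply the generalized Minkowski inequality, so that it suffices to control $\|(I-P)^{1+\eta}P^{k+l-1}(I-P^{r^2})^nf\|_{L^2(C_j(B))}$ for each $k,l$. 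Writing $(I-P^{r^2})^n=(I-P)^n(\sum_{s=0}^{r^2-1}P^s)^n$ and expanding the second factor as a polynomial in $P$ with nonnegative coefficients summing to $r^{2n}$ and supported on exponents $\leq nr^2$, this inner norm is $\leq r^{2n}\sup_{0\leq s\leq nr^2}\|(I-P)^{1+\eta+n}P^{k+l+s-1}f\|_{L^2(C_j(B))}$, exactly as in Step~1 of Lemma \ref{Fen1lemcalc}.

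For the inner term I would apply \eqref{Fen1GT2} with $E=C_j(B)$, $F=B$, exponent $1+\eta+n$ and $L:=k+l+s$. Since $f$ is supported in $B=B(x_0,r)$ and $j\geq 2$, one has $d(C_j(B),B)\geq (2^j-1)r\gtrsim 2^jr$, and taking $x_0$ to be the center of $B$ makes hypothesis (i) of Theorem \ref{Fen1GTineq} hold, because $\sup\{d(x_0,y):y\in B\}=r\leq 3\,d(E,F)$. This yields
\[
\|(I-P)^{1+\eta+n}P^{L-1}f\|_{L^2(C_j(B))}\lesssim \frac{1}{L^{1+\eta+n}}\,\frac{1}{V(x_0,\sqrt L)^{1/2}}\,e^{-c\,4^jr^2/L}\,\|f\|_{L^1}.
\]
The only new point compared with Lemma \ref{Fen1lemcalc} is the volume normalization. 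The combined factor $e^{-c\,4^jr^2/L}L^{-(1+n)}$ is concentrated at the scale $L\simeq 4^jr^2\geq r^2$: contributions from $L\ll 4^jr^2$ are annihilated by the Gaussian factor and those from $L\gg 4^jr^2$ by the polynomial decay. On the effective range $\sqrt L\gtrsim 2^jr\geq r$ one has $V(x_0,\sqrt L)\geq V(x_0,r)=V(B)$, so $V(x_0,\sqrt L)^{-1/2}\lesssim V(B)^{-1/2}$, while the off-scale range costs at worst a power of $r/\sqrt L$ that is absorbed by the exponential. Finally $V(B)\leq V(2^jB)$ lets me replace $V(B)^{-1/2}$ by $V(2^jB)^{1/2}/V(B)$, the shape required. (Alternatively, estimating $\|\cdot\|_{L^2(C_j(B))}\leq m(C_j(B))^{1/2}\|\cdot\|_{L^\infty(C_j(B))}$ and bounding $V(x,\sqrt L)^{-1}\lesssim (2^jr/\sqrt L)^dV(B)^{-1}$ for $x\in C_j(B)$ via \eqref{Fen1PDV}, together with $m(C_j(B))\lesssim V(2^jB)$, produces the factor $V(2^jB)^{1/2}/V(B)$ on the nose, which is exactly the form tailored to the application in Theorem \ref{Fen1interpolation2}.)

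It then remains to sum. Setting $A_l^{k,r,j}=l^{\beta-\eta}\sup_{0\leq s\leq nr^2}\{e^{-c\,4^jr^2/L}L^{-(1+n)}\}$, where the factor $L^{-\eta}$ is absorbed using $L\geq l$, the family $\{A_l^{k,r,j}\}$ belongs to the class $E_M$ (Proposition \ref{Fen1l2l1}), so Lemma \ref{Fen1L2L1} replaces $(\sum_l l^{-1}(A_l^{k,r,j})^2)^{1/2}$ by $\sum_l l^{-1}A_l^{k,r,j}$; summing in $k$ against $a_k$ and invoking the beta-integral bound \eqref{Fen1ght} collapses everything to $\sum_{m\geq 1}\sup_{0\leq s\leq nr^2}\{e^{-c\,4^jr^2/(m+s)}(m+s)^{-(1+n)}\}$, which is $\lesssim 4^{-jn}r^{-2n}$ by the same elementary splitting at $m\simeq 4^jr^2$ carried out at the end of Lemma \ref{Fen1lemcalc}. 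Multiplying by the prefactor $r^{2n}V(B)^{-1/2}$ gives $\lesssim 2^{-2jn}V(B)^{-1/2}\|f\|_{L^1}$, whence $\lesssim 2^{-2jn}V(2^jB)^{1/2}V(B)^{-1}\|f\|_{L^1}$ by $V(B)\leq V(2^jB)$, as claimed. I expect the only real care to be needed in verifying the geometric hypothesis of \eqref{Fen1GT2} for the pair $(C_j(B),B)$ and in pinning down the volume factor in the stated form; the summation is an almost verbatim transcription of Steps~2--3 of Lemma \ref{Fen1lemcalc}.
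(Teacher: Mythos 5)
Your proof is correct and is essentially the paper's own argument: the paper's (deliberately sketchy) proof of Lemma \ref{Fen1lemcalc2} runs Steps 1--3 of Lemma \ref{Fen1lemcalc} verbatim, applying \eqref{Fen1GT2} to the pair $(E,F)=(C_j(B),B)$ and reusing the same sequences $A_l^{k,r,j}$. The volume normalization you correctly single out as the only new point is exactly the paper's key display, $\frac{V(B)}{V(x_0,\sqrt{l+k+s})}\lesssim \left(\frac{r^2}{l+k+s}\right)^{d/2}\lesssim \exp\left(-c\,\frac{4^jr^2}{l+k+s}\right)$ (with the constant $c$ in the exponential suitably decreased), followed by $V(B)^{-1/2}\leq V(2^jB)^{1/2}/V(B)$, matching your absorption argument.
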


\begin{proof}
The proof of Lemma \ref{Fen1lemcalc2} is very similar to the one of Lemma \ref{Fen1lemcalc}, and we will therefore by sketchy.    
First, we still have {\small
\[ \begin{split}
    \left\| g_\beta (I-P^{r^2})^n f\right\|_{L^2(C_j(B))} & \leq \sum_{k\geq 0} a_k  \left( \sum_{l\geq 1} l^{2\beta-1} \left\|(I-P)^{1+\eta} P^{k+l-1}(I-P^{r^2})^n f\right\|^2_{L^2(C_j(B))} \right)^{\frac{1}{2}}
   \end{split}\] }
where $a_k$ is defined as in the proof of \ref{Fen1lemcalc}. 
\begin{itemize}
\item[{\bf 1-}]{\bf Estimate of the inner term}

 Let $B = B(x_0,r)$.
   As in Lemma \ref{Fen1lemcalc} and using \eqref{Fen1GT2},   
\begin{equation} \begin{split}
\|(I-P)^{1+\eta}&  P^{k+l-1}(I-P^{r^2})^n  f\|_{L^2(C_j(B))} \\
& \lesssim l^{-\eta}  \|f\|_{L^{1}(B)} \sup_{s\in \bb 0,nr^2\bn}\left( \frac{1}{V(x_0,\sqrt{l+k+s})^{\frac{1}{2}}} \dfrac{\exp\left( -c \frac{4^j r^2}{l+k+s}\right)}{(l+k+s)^{1+n}}\right) \\
& \lesssim l^{-\eta} \frac{V(2^jB)^{\frac{1}{2}}}{V(B)} \|f\|_{L^{1}} \sup_{s\in \bb 0,nr^2\bn}\left(  \dfrac{\exp\left( -c \frac{4^j r^2}{l+k+s}\right)}{(l+k+s)^{1+n}}\right) \\
\end{split} \end{equation}
where we use for the second line the following fact, consequence of \eqref{Fen1DV}
\[  \frac{V(B)}{V(x_0,\sqrt{l+k+s})}   \lesssim \left(\frac{r^2}{l+k+s}\right)^{\frac{d}{2}} \lesssim \exp\left( -c \frac{4^j r^2}{l+k+s}\right).\]

\item[{\bf 2/3-}] {\bf Conclusion}\par
\noindent The proof is then the same (with obvious modifications) as the proof of Lemma \ref{Fen1lemcalc},    using the same sequence $A_l^{k,r,j}$ as in the proof of Lemma \ref{Fen1lemcalc}.   
\end{itemize}
\end{proof}

   We can now conclude for the $L^p$-boundedness of $g_\beta$ and $\tilde{g}_{\beta}$ for $1<p<2$.   

\begin{proof} (   $L^p$-boundedness and weak $(1,1)$ type of $g_\beta$ for $1<p<2$   )\par
\noindent We    apply    Theorem \ref{Fen1interpolation2}.    It is enough to check    \eqref{Fen1inter3} and \eqref{Fen1inter4} with $g(j)=2^{-j}$.    We take $A_B = P^{r^2}$ where $r$ is the radius of $B$. The inequality \eqref{Fen1inter3} is then a consequence of Lemma \ref{Fen1lemcalc2} for $n=1$. For the estimate \eqref{Fen1inter4},    it suffices to prove that,    for all balls $B$ of $\Gamma$, all $j\geq 1$, and all $f$ supported in $B$,
\[\|P^{r^2}f\|_{L^2(C_j(B))} \lesssim \frac{V(2^{j+1}B)^{\frac{1}{2}}}{V(B)}e^{-c4^j} \|f\|_{L^1(B)}.\]
The case $j\geq 2$ is a consequence of \eqref{Fen1GT2} and \eqref{Fen1DV}, while the case $j=1$ follows from \eqref{Fen1UE} and \eqref{Fen1ghk}.
\end{proof}

\begin{proof} (   $L^p$-boundedness and weak $(1,1)$ type of $\tilde g_\beta$   )\par
\noindent For $\beta>0$, the proof is the analogous to the one of the $L^p$-boundedness of $g_\beta$, using ($GGT_2$) instead of ($GT_2$).

The case $\beta \in \left( -\frac{1}{2},0\right]$ is analogous, with minor changes identical to the corresponding case in the proof of $L^p$-boundedness of $\tilde g_\beta$ for $p>2$. 

\end{proof}

\subsection{Reverse $L^p$ inequalities for    $g_\beta$ and $\tilde g_\beta$}

   Let us now end up the proof of Theorem \ref{Fen1maintheo}. What remains to be proved is:   
  \begin{theo} \label{Fen1theogeneral}
  Let $(\Gamma,\mu)$ be a weighted graph satisfying \eqref{Fen1DV}, \eqref{Fen1LB} and \eqref{Fen1DUE}. For all $1<p<+\infty$ and $\beta>0$, there exist three constants $C_1,C_2,C_3>0$ such that
  \[\|f\|_{L^p} \leq C_1 \|g_\beta f\|_{L^p} \leq C_2 \|g^2_\beta f\|_{L^p} \leq C_3 \|f\|_{L^p} \qquad \forall f\in L^p(\Gamma) \cap L^2(\Gamma).\]
  \end{theo}

\begin{rmq}
 Notice that Theorem \ref{Fen1theogeneral} implies Theorem \ref{Fen1maintheo} for $g_\beta$. A statement analogous to Theorem \ref{Fen1theogeneral} holds with $\tilde{g}_{\beta}$, with the same proof, which ends the proof of Theorem \ref{Fen1maintheo}.
\end{rmq}
\begin{proof}

By Lemma \ref{Fen1DSE1}, we get
\[g^2_\beta f(x) \simeq g_\beta (I+P)^{\beta} f(x) \qquad \forall f\in L^p(\Gamma) \cap L^2(\Gamma), \, \forall x\in \Gamma.\]
   As a consequence of this fact and Remark \ref{Fen1poweri-p}, for all $p\in (1,+\infty)$,    we have the inequalities
\begin{equation} \label{Fen1circle1} \|g^2_\beta f\|_{L^p} \lesssim \|g_\beta (I+P)^{\beta} f\|_{L^p} \lesssim \|(I+P)^{\beta} f\|_{L^p} \lesssim \|f\|_{L^p}.\end{equation}
The proof will then be complete if we establish, for all $1<p<+\infty$,
\begin{equation} \label{Fen1circle2} \|f\|_{L^p} \leq \|g^2_\beta f\|_{L^p} \qquad  \forall f\in L^p(\Gamma). \end{equation}
   Indeed, assume that \eqref{Fen1circle2} is established. The conjunction of    \eqref{Fen1circle1} and \eqref{Fen1circle2} provide the equivalences
 \[\|g^2_\beta f\|_{L^p} \simeq \|f\|_{L^p} \qquad \forall f\in L^p(\Gamma)\cap L^2(\Gamma)\]
and 
 \[\|g_\beta f\|_{L^p} \simeq \|f\|_{L^p} \qquad \forall f\in A = \{ (I+P)^{\beta} g, \ g\in L^p \cap L^2 \}\]
   and it is therefore enough to check that $A$ is dense in $L^p(\Gamma)$. \par
\noindent    To that purpose, notice     that \eqref{Fen1circle1} and \eqref{Fen1circle2} also provide the equivalence $\|(I+P)^{\beta}f\|_{L^p(\Gamma)} \simeq \|f\|_{L^p(\Gamma)}$    for all    $f\in L^2(\Gamma)\cap L^p(\Gamma)$, then    for all $f\in L^p(\Gamma)$ by the $L^p$-boundedness of $(I+P)^\beta$ and since $L^2(\Gamma)\cap L^p(\Gamma)$ is dense in $L^p(\Gamma)$. This entails that $(I+P)^{\beta}$ is one-to-one on $L^{p^{\prime}}(\Gamma)$ (with $\frac 1p+\frac 1{p^{\prime}}=1$), which implies that $A$ is dense in $L^p(\Gamma)$. \par  
\noindent The inequality \eqref{Fen1circle2} can be proven by duality. Actually,    for all $f,h\in L^2(\Gamma)$, Lemma \ref{Fen1casL2} shows that   
\[\begin{split}
   4<f,h> & = \|f+h\|_2^2 - \|f-h\|_2^2 \\
& = \|g^2_\beta (f+h)\|_2^2 - \|g^2_\beta (f-h)\|_2^2 \\
& \leq \|g^2_\beta f + g^2_\beta h\|_2^2 - \| g^2_\beta f- g^2_\beta h\|_2^2 \\
& \quad = 4<g^2_\beta f, g^2_\beta h>.
  \end{split}\]
For the   third line,    notice that
\[
g^2_\beta f-g^2_{\beta}h\leq g^2_{\beta}(f-h),
\]
and  interverting  the roles of $f$ and $h$, we obtain
\[
\left\vert g^2_\beta f-g^2_{\beta}h\right\vert \leq g^2_{\beta}(f-h),
\]
so that
\[
\left\Vert g^2_\beta f-g^2_{\beta}h\right\Vert_{L^2} \leq \left\Vert g^2_{\beta}(f-h)\right\Vert_{L^2}.
\]
Thus, if $\frac{1}{p} + \frac{1}{p^{\prime}} = 1$, we have for all $f\in L^p(\Gamma) \cap L^2(\Gamma)$, $1<p<+\infty$,
\[\begin{split}
   \|f\|_{L^p(\Gamma)} & = \sup_{\begin{subarray}{c} h\in L^2 \cap L^{p^{\prime}} \\ \|h\|_{  L^{p^{\prime}}  } \leq 1 \end{subarray}} <f,h> \\
& \leq \sup_{\begin{subarray}{c} h\in L^2 \cap L^{p^{\prime}} \\ \|h\|_{L^{p^{\prime}}} \leq 1 \end{subarray}} <g^2_\beta f, g^2_\beta h> \\
& \leq \|g^2_\beta f\|_{L^p} \sup_{\begin{subarray}{c} h\in L^2 \cap L^{p^{\prime}} \\ \|h\|_{L^{p^{\prime}}} \leq 1 \end{subarray}} \|g^2_\beta h\|_{L^{p^{\prime}}} \\
& \lesssim \|g^2_\beta f\|_{L^p} \sup_{\begin{subarray}{c} h\in L^2 \cap L^{p^{\prime}} \\ \|h\|_{L^{p^{\prime}}} \leq 1 \end{subarray}} \| h\|_{L^{p^{\prime}}} \\
& \quad = \|g^2_\beta f\|_{L^p}
  \end{split}\]
where the third line is a consequence of H\"older inequality and the fourth   one follows from    the boundedness of $g^2_\beta$ on $L^{p^{\prime}}(\Gamma)$. We obtain the desired result
\[\|f\|_{L^p} \lesssim \| g^2_\beta f\|_{L^p}.\]
\end{proof}

\section{$L^p$-boundedness of $\tilde g_0$, $1<p<2$  }

\noindent Define,    for all $q\in (1,2]$ and all functions $f$ on $\Gamma$,  
\[  \tilde N_q f := q f\Delta f - f^{2-q}\Delta f^q   \]
and,    for all functions $u_n:\N\times \Gamma\rightarrow \R$,   
\[  N_qu_n:= q u_n [\partial_n + \Delta] u_n - u_n^{2-q} [\partial_n + \Delta] u_n^q = \tilde N_q u_n + qu_n \partial_n u_n - u_n^{2-q} \dr_n u_n^q.   \]
   Here and after, $\partial_nu_n=u_{n+1}-u_n$ for all $n\in \N$.   
\begin{rmq}
 \begin{itemize}
  \item Dungey proved in \cite{Dungey2} that $0 \leq \tilde N_q(f) \leq \frac{q}{2} |\nabla f|^2$.
\item    The Young inequality shows at once that 
\begin{equation} \label{Fen1Young}
\dr_n u_n^q \geq qu_n^{q-1} \dr_n u_n,
\end{equation}
and then   $N_q(u_n) \leq \tilde N_q(u_n)$.   
  \item    As will be shown in Proposition \ref{Fen1propsup} below, $N_q(P^nf)\geq 0$ for all nonnegative functions $f$ and all $n\in \N$.    \end{itemize}
\end{rmq}

We    also introduce the functional   
\[\tilde g_{0,q} f(x) = \left( \sum_{n\geq 0} N_q(P_nf)(x) \right)^{\frac{1}{2}}.\]

\begin{theo} \label{Fen1secondtheo}
 If $q\in (1,2]$, then there exists a constant    $c>0$    such that
\[\|\tilde g_{0,q} f\|_q \leq c \|f\|_q\]
for all nonnegative functions $f \in L^1 \cap L^\infty$.
\end{theo}

\begin{cor} \label{Fen1NPnEstimates}
 Let $(\Gamma,\mu)$ be a graph satisfying \eqref{Fen1LB} and \eqref{Fen1LDV} and let $q\in (1,2]$ Then there exists $c_q>0$ such that
\[\|\nabla P^n f\|_q  \leq \frac{c_q}{\sqrt n} \|f\|_q\]
\end{cor}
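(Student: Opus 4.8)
The corollary is precisely assertion \eqref{Fen1GGp} for $q\in(1,2]$, and the plan is to derive it from the $L^q$-boundedness of $\tilde g_{0,q}$ established in Theorem \ref{Fen1secondtheo}. First I would reduce to the situation of that theorem. Since $\nabla$ is sublinear and $P$ preserves positivity, writing $f=f_+-f_-$ and invoking the density of $L^1\cap L^\infty$ in $L^q$, it is enough to prove the estimate for $0\le f\in L^1\cap L^\infty$; I also record $\|P^nf\|_q\le\|f\|_q$, which follows from \eqref{Fen1Pcont}.

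The genuine difficulty is that every inequality available between $N_q$ and the gradient runs the \emph{wrong} way: the remarks preceding Theorem \ref{Fen1secondtheo} only give $N_q(P_mf)\le\tilde N_q(P_mf)\le\frac q2|\nabla P^mf|^2$ (the latter via \eqref{Fen1Young}), so $\tilde g_{0,q}$ is dominated by, rather than dominating, the true gradient square function. To reverse this at the level of $L^q$-norms I would use the power-weight Hölder trick adapted to the exponent $q$: setting $u_n=P^nf$ and splitting $|\nabla u_n|^q=(u_n^{q-2}|\nabla u_n|^2)^{q/2}(u_n^q)^{(2-q)/2}$, Hölder with exponents $2/q$ and $2/(2-q)$ yields
\[\|\nabla u_n\|_q^q\le\Big(\sum_{x\in\Gamma}u_n(x)^{q-2}|\nabla u_n(x)|^2m(x)\Big)^{q/2}\|f\|_q^{q(2-q)/2},\]
so that matters reduce to the weighted energy $\mathcal I_n:=\sum_x u_n^{q-2}|\nabla u_n|^2m$, the discrete avatar of $\int u^{q-2}|\nabla u|^2$. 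A Stroock--Varopoulos / convexity inequality valid for $1<q\le2$ should then compare $\mathcal I_n$, up to replacing $u_n(x)^{q-2}$ by the symmetric factor $\max(u_n(x),u_n(y))^{q-2}$, with $\sum_x N_q(P_nf)(x)m(x)$; here the identity $N_q(P_nf)=u_n^{2-q}\big(P(u_n^q)-(Pu_n)^q\big)$, coming from the discrete heat equation $[\partial_n+\Delta]u_n=0$ together with the positivity of $N_q(P^nf)$ from Proposition \ref{Fen1propsup}, is what makes the comparison meaningful.

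To extract the decisive factor $n^{-1/2}$ I would work not with a single time but with a whole dyadic block. Applying Theorem \ref{Fen1secondtheo} to $P^nf$ and using $\tilde g_{0,q}(P^nf)^2=\sum_{m\ge n}N_q(P_mf)$, the tail difference controls the block,
\[\Big\|\Big(\sum_{m=n}^{2n-1}N_q(P_mf)\Big)^{1/2}\Big\|_q\le\|\tilde g_{0,q}(P^nf)\|_q\lesssim\|f\|_q,\]
and an almost-monotonicity of $m\mapsto N_q(P_mf)$ over $[n,2n)$, which I expect to follow from the analyticity/Ritt behaviour of $P$ (cf. Remark \ref{Fen1poweri-p}), lets me bound the block below by a constant times $n\,N_q(P_{2n}f)$. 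Reconciling this $L^q$ block bound with the $L^1$-type weighted energy $\mathcal I_{2n}$ through the convexity comparison above should give $\mathcal I_{2n}\lesssim n^{-1}\|f\|_q^{q}$, whence $\|\nabla P^{2n}f\|_q\lesssim n^{-1/2}\|f\|_q$, the odd indices being treated identically. I expect the main obstacle to be exactly this passage from $N_q$ back to the honest gradient: the weight $u_n^{q-2}$ degenerates at the points where $u_n$ vanishes, so there is no pointwise lower bound of $N_q$ by $|\nabla u_n|^2$, and the reversal must instead be carried out through the Stroock--Varopoulos estimate and the smoothing of the flow, with careful bookkeeping of the $L^1$-versus-$L^{q/2}$ integrability of the block.
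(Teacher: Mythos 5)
Your reduction to nonnegative $f\in L^1\cap L^\infty$ is fine, but the heart of your argument --- the passage from $N_q$ back to the gradient --- is left as an acknowledged hope, and the paper in fact supplies exactly the tools you declare missing. You assert that ``every inequality available between $N_q$ and the gradient runs the wrong way'', yet Proposition \ref{Fen1propDungey} gives the averaged reverse bound $|\nabla f|^2(x)\le c_q\,A(\tilde N_qf)(x)$ --- this is precisely where the hypothesis \eqref{Fen1LDV} enters, a hypothesis your proposal never uses --- and the second assertion of Proposition \ref{Fen1propsup}, which you cite only for positivity, gives $\tilde N_q(P^nf)\le c_q N_q(P^nf)$ under \eqref{Fen1LB}. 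Combined with the $L^{q/2}$-boundedness \eqref{Fen1averaging} of $A$, these yield $\|\nabla P^nf\|_q^2=\bigl\||\nabla P^nf|^2\bigr\|_{q/2}\lesssim\|N_q(P^nf)\|_{q/2}=\|N_q^{1/2}(P^nf)\|_q^2$, so no Stroock--Varopoulos comparison is needed. Your substitute comparison would in any case fail as stated: symmetrizing the weighted energy $\mathcal I_n$ over the edge $(x,y)$ produces the weight $u_n(x)^{q-2}+u_n(y)^{q-2}\simeq\min(u_n(x),u_n(y))^{q-2}$, whereas by \eqref{Fen1tildeNq} the quantity $\tilde N_q$ carries, after symmetrization, the weight $\max(u_n(x),u_n(y))^{q-2}$, and the ratio $(\max/\min)^{2-q}$ is unbounded. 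The averaging operator $A$ is exactly Dungey's device for circumventing this degeneracy (a large difference attenuated at $x$ because $u_n(y)\gg u_n(x)$ contributes fully to $\tilde N_q$ at the neighbor $y$), and no purely one-site convexity argument can replace it; your closing remark that the reversal should come from ``the smoothing of the flow'' with ``careful bookkeeping of the $L^1$-versus-$L^{q/2}$ integrability'' is the gap, not a proof of it.

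Your dyadic-block step contains a second genuine gap: the pointwise ``almost-monotonicity'' of $m\mapsto N_q(P_mf)(x)$ on $[n,2n)$ does not follow from analyticity or the Ritt property, which give operator-norm decay such as $\|\Delta P^m\|_{q\to q}\lesssim m^{-1}$ but say nothing pointwise about the nonlinear quantity $N_q(P_mf)(x)$. The detour through Theorem \ref{Fen1secondtheo} is moreover unnecessary: the paper proves the single-time estimate \eqref{Fen1NqPnEstimates}, namely $\|N_q^{1/2}(P^nf)\|_q\lesssim n^{-1/2}\|f\|_q$, directly. Writing $u_n=P^nf$ and $J_n=-(\partial_n+\Delta)(u_n^q)$, so that $N_q(u_n)=u_n^{2-q}J_n$, H\"older gives
\[
\|N_q^{1/2}(P^nf)\|_q^q\le\Bigl[\sum_{x\in\Gamma}m(x)u_n(x)^q\Bigr]^{\frac{2-q}{2}}\Bigl[\sum_{x\in\Gamma}J_n(x)m(x)\Bigr]^{\frac q2},
\]
and the second factor is controlled by $\sum_x\Delta g(x)m(x)=0$, the Young inequality \eqref{Fen1Young}, a further H\"older, and $\|\partial_nu_n\|_q=\|\Delta P^nf\|_q\lesssim n^{-1}\|f\|_q$ from analyticity of $P$ on $L^q$. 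This is essentially your own H\"older split, but applied to $N_q$ itself rather than to $|\nabla u_n|^q$, so that the degenerate weight $u_n^{q-2}$ never appears; the corollary then follows immediately from the two pointwise propositions above.
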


\begin{rmq}
In \cite{Dungey2}, using semigroup arguments, Dungey proved the conclusion of Corollary \ref{Fen1NPnEstimates} under the   weaker    assumption that $-1$ does not belong to the $L^2$ spectrum of $P$. 
\end{rmq}

\subsection{Proof of Theorem \ref{Fen1secondtheo}}

The proof of this result is based on Stein's argument in \cite{Stein1}, Chapter II, also used in Riemannian manifolds in \cite{CDL} and on graphs with continuous time functionals in \cite{Dungey2}. \par
\noindent Let us first state the maximal ergodic theorem for Markov kernels (   see \cite{Stein2}, see also \cite{Stein1}, Chapter IV, Theorems 6 and 9  ):

\begin{lem} \label{Fen1ergodic} Let $(X,m)$ be a measurable space. 
Assume that $P$ is a linear operator  simultaneously  defined and bounded from $L^1(X)$ to itself and from $L^\infty(X)$ to itself that satisfies
\begin{enumerate}[i.]
 \item $P$ is self adjoint,
 \item $\|P\|_{L^1\to L^1} \leq 1$.
\end{enumerate}
 Let $f^*(x) = \sup_{n\geq 0} |P^n f(x)|$.
Then there exists a constant $c_q>0$ such that
\[\|f^*\|_q \leq c_q \|f\|_q\]
for all $q\in (1,+\infty]$.
\end{lem}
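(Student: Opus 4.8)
The plan is to follow Stein's argument, reducing the maximal bound first to the case of even powers and then to a martingale maximal inequality.

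\textbf{Reductions.} For $q=\infty$ the statement is trivial: by self-adjointness and hypothesis (ii), $\|P\|_{L^\infty\to L^\infty}=\|P\|_{L^1\to L^1}\le 1$, so $|P^nf|\le\|f\|_\infty$ for every $n$ and $f^*\le\|f\|_\infty$. By Riesz--Thorin interpolation $P$ is then a contraction on every $L^q(\Gamma)$, $1\le q\le\infty$. Next I would split the maximal function according to the parity of $n$: writing $M_e g:=\sup_{n\ge0}|P^{2n}g|$ and using $P^{2n+1}f=P^{2n}(Pf)$, one gets the pointwise bound $f^*\le M_ef+M_e(Pf)$. Since $P$ is an $L^q$-contraction, $\|M_e(Pf)\|_q\le C\|Pf\|_q\le C\|f\|_q$, so everything reduces to proving $\|M_e g\|_q\lesssim\|g\|_q$ for $1<q<\infty$.

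\textbf{Core step.} Set $Q:=P^2$. Then $Q$ is self-adjoint, an $L^q$-contraction for every $q$, and (in the case at hand, where $P$ is the positivity-preserving Markov operator of the graph) $Q$ is positivity-preserving with $Q1=1$ and $L^2$-spectrum in $[0,1]$. This is exactly the setting of Rota's dilation theorem: one builds an enlarged measure space carrying a decreasing sequence of $\sigma$-algebras $(\mathcal B_n)_n$, the associated conditional expectations $\mathcal E_n$, and an $L^q$-bounded embedding $g\mapsto\tilde g$ such that $Q^n g$ is recovered from $\mathcal E_n\tilde g$. Consequently the sequence $(Q^n g)_n$ is dominated by the reverse martingale $(\mathcal E_n\tilde g)_n$. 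This martingale representation is the analytic heart of Stein's argument, and is where positivity and self-adjointness are jointly used.

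\textbf{Conclusion and main obstacle.} Doob's maximal inequality for (reverse) martingales gives $\|\sup_n|\mathcal E_n\tilde g|\|_q\le q'\|\tilde g\|_q$ for $1<q<\infty$ (with $\frac1q+\frac1{q'}=1$), and since the embedding is $L^q$-bounded, $\|\tilde g\|_q\lesssim\|g\|_q$; hence $\|M_e g\|_q\lesssim\|g\|_q$, which combined with the reductions proves the lemma for all $1<q\le\infty$. I expect the main obstacle to be the construction of the dilation: Rota's theorem is classically stated over probability spaces, so adapting it to the $\sigma$-finite space $(\Gamma,m)$, and checking the reverse-martingale domination precisely, requires care. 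If one insists on the bare self-adjoint hypotheses of the lemma, without the positivity available in our application, the martingale representation must be replaced by Stein's control of the maximal function by a Littlewood--Paley $g$-function built from $Q$; the delicate point there is handling the negative part of the spectrum of $P$, which is precisely why the even-power reduction above is carried out first.
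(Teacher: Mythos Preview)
The paper does not give a proof of this lemma: it is simply quoted, with a reference to Stein (\cite{Stein2} and \cite{Stein1}, Chapter~IV, Theorems~6 and~9). Your sketch reproduces precisely the Rota--Stein argument that underlies those references: reduce to even powers so that $Q=P^2$ has nonnegative spectrum, invoke Rota's dilation to dominate $(Q^n g)_n$ by a reverse martingale of conditional expectations, and conclude with Doob's inequality. For the operator $P$ actually used in the paper (a positivity-preserving Markov kernel with $P1=1$) this is exactly right, and your remarks on the $\sigma$-finite adaptation and on the role of positivity in Rota's construction are accurate.

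Your final paragraph is also on target: under the \emph{bare} hypotheses of the lemma (self-adjoint contraction on $L^1$ and $L^\infty$, no positivity assumed) Rota's theorem is not directly available, and one must use instead Stein's spectral/$g$-function argument from the same cited chapter. So there is no gap in what you wrote, only the caveat---which you already identify---that the martingale route covers the application at hand rather than the lemma in its full stated generality.
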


\noindent We can now turn to the proof of Theorem \ref{Fen1secondtheo}.

\noindent If $u_n = P^{n-1}f$, then $[\dr_n + \Delta]u_n = 0$ and,    as will be proved in Proposition \ref{Fen1propsup} below,    one has
\[N_q u_n = -u_n^{2-q} [\dr_n + \Delta] u_n^q \geq 0.\]

\noindent Consequently, we have
\[\begin{split}
   \tilde g_{0,q} f(x)^2 & =  \sum_{n \geq 0} N_q(P^{n} f)(x) = -\sum_{n\geq 0} [P^{n}f(x)]^{2-q} [\dr_n + \Delta] ([P^{n}f(x)]^q) \\
& \leq -f^*(x)^{2-q} \sum_{n\geq 0} [\dr_n + \Delta] ([P^{n}f(x)]^q).
  \end{split}\]
It follows, with $J(x) = -\sum_{n\geq 0} [\dr_n + \Delta] ([P^{n}f(x)]^q) \geq 0$,
\begin{equation}\label{Fen1sdf1}\begin{split}
   \|\tilde g_{0,q} f\|_q^q & \leq \sum_{x\in \Gamma} f^*(x)^{\frac{(2-q)q}{2}} J(x)^{\frac{q}{2}} m(x) \\
& \leq \left( \sum_{x\in \Gamma} f^*(x)^q m(x) \right)^{\frac{2-q}{2}} \left(\sum_{x\in \Gamma} J(x) m(x) \right)^{\frac{q}{2}}.
  \end{split}\end{equation}
Yet, by Lemma \ref{Fen1ergodic},
\begin{equation} \label{Fen1sdf2} \begin{split}
   \left( \sum_{x\in \Gamma} f^*(x)^q m(x) \right) \lesssim \|f\|_q^q
  \end{split}\end{equation}
and since $\ds \sum_{x\in \Gamma} \Delta g(x) m(x) = 0$ for all $g\in L^1(\Gamma)$,
\begin{equation} \label{Fen1sdf3}\begin{split}
   \sum_{x\in \Gamma} J(x) m(x) & = -\sum_{x\in \Gamma} m(x) \sum_{n\geq 0}\dr_n [P^{n}f(x)]^q \\
& \leq \sum_{x\in \Gamma} f(x)^q m(x) = \|f\|_q^q.
  \end{split}\end{equation}
The inequality in the last line is due to the fact that, for all $N\in \N$,
\[
\sum_{n=0}^N \dr_n \sum_{x\in \Gamma} [P^{n}f(x)]^qm(x)=\left\Vert f\right\Vert_q^q-\left\Vert P^{N+1}f\right\Vert_q^q.
\]

 Using \eqref{Fen1sdf1}, \eqref{Fen1sdf2} and \eqref{Fen1sdf3}, we thus obtain  the conclusion of Theorem \ref{Fen1secondtheo}.

\subsection{Proof of Theorem \ref{Fen1Maintheo}}

Recall   some    facts proved by Dungey in \cite{Dungey2}. Define the ``averaging'' operator $A$ by setting
\[(Af)(x) = \sum_{y\in B(x,2)} f(y)  = \sum_{y\sim x} f(y) \]
for $x\in \Gamma$ and functions $f:\Gamma\to \R$.

\begin{prop} \label{Fen1propDungey}
 Suppose that $(\Gamma,\mu)$ satisfies property \eqref{Fen1LDV}, and let $q\in (1,2]$. There exists $c_q >0$ such that
\[|\nabla f|^2 (x)  \leq c_q \, A(\tilde N_qf)(x)\]
for all    $x\in \Gamma$    and    all    nonnegative functions $f\in L^\infty$. Moreover, there exists $c'_q>0$ such that
\begin{equation} \label{Fen1averaging}
 \|AF\|_{\frac{q}{2}} \leq c'_q \|F\|_{\frac{q}{2}}
\end{equation}
for   all nonnegative functions    $F$ on $\Gamma$.
\end{prop}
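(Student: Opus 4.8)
The plan is to reduce the first inequality to an elementary pointwise estimate combined with a volume comparison coming from \eqref{Fen1LDV}, and to prove \eqref{Fen1averaging} by a direct summation. First I would expand $\tilde N_q f$ as a weighted sum over neighbours. Using the definition \eqref{Fen1defP} of $P$ together with \eqref{Fen1sum=1} to write $\Delta f(x)=\sum_y p(x,y)(f(x)-f(y))m(y)$, a direct computation gives
\[
\tilde N_q f(x)=\sum_{y}p(x,y)m(y)\,\psi_q(f(x),f(y))=\frac{1}{m(x)}\sum_{y}\mu_{xy}\,\psi_q(f(x),f(y)),
\]
where for $a,b\geq 0$ I set $\psi_q(a,b):=(q-1)a^2-qab+a^{2-q}b^q$. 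In the same notation $|\nabla f|^2(x)=\frac{1}{2m(x)}\sum_{y}\mu_{xy}|f(x)-f(y)|^2$, and the convexity of $b\mapsto b^q$ gives $\psi_q\geq 0$, recovering $\tilde N_q f\geq 0$.

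The averaging step is where \eqref{Fen1LDV} enters. Since $d(x,x)=0<2$, both $x$ and all its neighbours belong to $B(x,2)$, so retaining in $A(\tilde N_q f)(x)=\sum_{z\in B(x,2)}\tilde N_q f(z)$ the full term $z=x$ and, for each neighbour $z=y$, only the single summand indexed by $x$ (using $\mu_{yx}=\mu_{xy}$), I obtain
\[
A(\tilde N_q f)(x)\geq \frac{1}{m(x)}\sum_{y}\mu_{xy}\,\psi_q(f(x),f(y))+\sum_{y}\frac{\mu_{xy}}{m(y)}\,\psi_q(f(y),f(x)).
\]
The two sums carry the mismatched weights $1/m(x)$ and $1/m(y)$; to reconcile them I would use \eqref{Fen1LDV} in the form $m(y)\leq V(x,2)\leq c\,m(x)$, valid for every $y\in B(x,2)$, which replaces $1/m(y)$ by $1/(c\,m(x))$ at the price of $c\geq 1$, yielding
\[
A(\tilde N_q f)(x)\geq \frac{1}{c\,m(x)}\sum_{y}\mu_{xy}\bigl[\psi_q(f(x),f(y))+\psi_q(f(y),f(x))\bigr].
\]

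The crux, and the step I expect to be the main obstacle, is the symmetric pointwise inequality
\[
\psi_q(a,b)+\psi_q(b,a)\geq \kappa_q\,(a-b)^2\qquad(a,b\geq 0)
\]
for some $\kappa_q>0$. Both terms are genuinely needed, since for fixed $a$ and $b\to+\infty$ the summand $\psi_q(a,b)$ grows only like $b^q$ while $\psi_q(b,a)$ grows like $(q-1)b^2$. I would prove it by homogeneity: both sides are $2$-homogeneous, so after treating $b=0$ by hand (there the left side equals $(q-1)a^2$) the claim reduces, with $t=a/b$, to $\phi(t):=\psi_q(t,1)+\psi_q(1,t)\geq \kappa_q(t-1)^2$ on $(0,+\infty)$. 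A short computation gives $\phi(1)=\phi'(1)=0$ and $\phi''(1)=2q(q-1)>0$, so $\phi(t)/(t-1)^2\to q(q-1)$ as $t\to1$, while this quotient tends to $q-1>0$ as $t\to0^+$ and as $t\to+\infty$; since $\psi_q(a,b)=0$ forces $a=b$ (strict convexity of $b\mapsto\psi_q(a,b)$ for $q\in(1,2)$), the quotient is continuous and strictly positive on $[0,+\infty)$ with a positive limit at infinity, hence bounded below by some $\kappa_q>0$. Combining with the previous display gives $A(\tilde N_q f)(x)\geq \frac{2\kappa_q}{c}\,|\nabla f|^2(x)$, which is the first assertion with $c_q=c/(2\kappa_q)$.

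Finally, for \eqref{Fen1averaging} I would argue directly, without duality. Since $q/2\in(\tfrac12,1]$, the subadditivity of $t\mapsto t^{q/2}$ yields $(AF(x))^{q/2}\leq\sum_{y\in B(x,2)}F(y)^{q/2}$, whence
\[
\|AF\|_{q/2}^{q/2}\leq \sum_{x}m(x)\sum_{y\in B(x,2)}F(y)^{q/2}=\sum_{y}F(y)^{q/2}\sum_{x\in B(y,2)}m(x)=\sum_{y}F(y)^{q/2}\,V(y,2),
\]
where I used the symmetry $y\in B(x,2)\Leftrightarrow x\in B(y,2)$ to interchange the summations. A last application of \eqref{Fen1LDV}, namely $V(y,2)\leq c\,m(y)$, gives $\|AF\|_{q/2}^{q/2}\leq c\,\|F\|_{q/2}^{q/2}$, i.e. \eqref{Fen1averaging} with $c'_q=c^{2/q}$.
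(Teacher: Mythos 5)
Your proof is correct, and it is worth noting that it supplies an argument the paper itself omits: Proposition \ref{Fen1propDungey} is not proved in the paper but simply recalled from Dungey \cite{Dungey2}. Your overall route --- expanding $\tilde N_q f(x)=\frac{1}{m(x)}\sum_y \mu_{xy}\,\psi_q(f(x),f(y))$ edgewise, symmetrizing each edge through the averaging operator with \eqref{Fen1LDV} supplying $m(y)\le V(x,2)\le c\,m(x)$ for $y\in B(x,2)$, and bounding $A$ on $L^{q/2}$ by subadditivity of $t\mapsto t^{q/2}$ together with the interchange of summations and a second use of \eqref{Fen1LDV} --- is in substance Dungey's. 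The one point where you diverge is the scalar inequality $\psi_q(a,b)+\psi_q(b,a)\ge\kappa_q(a-b)^2$: your homogeneity-plus-compactness argument is sound (the quotient $\phi(t)/(t-1)^2$ extends continuously at $t=1$ with value $q(q-1)$, is strictly positive by strict convexity of $b\mapsto\psi_q(a,b)$, and tends to $q-1$ at $0$ and $+\infty$), but it yields a non-explicit $\kappa_q$. A shortcut, consistent with how the paper argues for Proposition \ref{Fen1propsup}, is to use the second-order Taylor formula \eqref{Fen1TaylorExp}, which gives the identity $\psi_q(a,b)=q(q-1)(a-b)^2\int_0^1\frac{(1-u)\,a^{2-q}}{((1-u)a+ub)^{2-q}}\,du$ for $a>0$; when $b\le a$ the integrand is at least $1-u$, so $\psi_q(a,b)\ge\frac{q(q-1)}{2}(a-b)^2$, and since for any pair one of the two orderings holds and the other term is nonnegative, your symmetric inequality follows with the explicit constant $\kappa_q=\frac{q(q-1)}{2}$, sparing the limit analysis. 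Otherwise your argument is exactly on target, including the pertinent observation that a single $\psi_q(a,b)$ grows only like $b^q=o(b^2)$ as $b\to+\infty$, so the symmetrization over both endpoints of each edge --- and hence the averaging operator $A$ --- is genuinely needed.
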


\noindent Note that $\frac{q}{2} \leq 1$ in \eqref{Fen1averaging}, 
and that we use the notation $\|F\|_r:= \ds\left(\sum_{x\in \Gamma} m(x) |F(x)|^r\right)^{\frac{1}{r}}$ for $r\in (0,1]$.

\noindent In order to   prove    Theorem \ref{Fen1Maintheo}, we need the following result.

\begin{prop} \label{Fen1propsup}
Let $(\Gamma,\mu)$ be a weighted graph and let $q\in (1,2]$. Then $N_q(P^n f) \geq 0$ for all functions $0 \leq f\in  L^\infty$.

Moreover, if $(\Gamma,\mu)$ satisfies \eqref{Fen1LB}, there exists a constant $c_q>0$ such that
\[0 \leq \tilde N_q(P^n f) \leq c_q N_q(P^n f).\]
\end{prop}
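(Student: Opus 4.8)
The plan is to fix a nonnegative $f\in L^\infty$, set $u_n=P^nf\geq 0$, and use the identity $[\partial_n+\Delta]u_n=0$, which holds because $u_{n+1}-u_n+u_n-Pu_n=Pu_n-Pu_n=0$. Hence the first term of $N_q(u_n)$ vanishes and
\[
N_q(u_n)=-u_n^{2-q}[\partial_n+\Delta]u_n^q=u_n^{2-q}\left(Pu_n^q-(Pu_n)^q\right).
\]
Since $t\mapsto t^q$ is convex for $q\in(1,2]$ and $P$ is an averaging operator with $\sum_yp(x,y)m(y)=1$ by \eqref{Fen1sum=1}, Jensen's inequality gives $(Pu_n)^q\leq Pu_n^q$ pointwise, whence $N_q(P^nf)\geq 0$. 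This proves the first assertion. (When $u_n(x)=0$ the expression is read by continuity; for $q<2$ both members vanish, and $q=2$ is immediate from Cauchy--Schwarz.)

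For the second assertion I would rewrite both functionals through one convex profile. Fix $x$ with $u_n(x)>0$ and introduce the probability measure $\nu_y:=p(x,y)m(y)$ on $\Gamma$; it has total mass $1$, and by \eqref{Fen1LB} it charges the diagonal, $\nu_x=\mu_{xx}/m(x)\geq\epsilon$. With $\phi(t):=t^q-qt+(q-1)$ and $t_y:=u_n(y)/u_n(x)$, a direct expansion of $\tilde N_q u_n(x)=qu_n\Delta u_n-u_n^{2-q}\Delta u_n^q$ yields
\[
\tilde N_q u_n(x)=u_n(x)^2\sum_y\nu_y\,\phi(t_y),
\]
while, using $u_{n+1}=Pu_n$,
\[
N_q u_n(x)=u_n(x)^2\left(\sum_y\nu_y\,\phi(t_y)-\phi\Big(\sum_y\nu_y\,t_y\Big)\right).
\]
The function $\phi$ is convex, nonnegative, and vanishes to second order at $t=1$ ($\phi(1)=\phi'(1)=0$, $\phi''(t)=q(q-1)t^{q-2}>0$); in particular $\tilde N_q u_n\geq 0$, which is the left inequality (Dungey's bound). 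Thus $\tilde N_q u_n(x)\leq c_q N_q u_n(x)$ is equivalent to a \emph{quantitative} Jensen gap, $\phi\big(\sum_y\nu_y t_y\big)\leq\theta\sum_y\nu_y\phi(t_y)$ for some $\theta=\theta(q,\epsilon)<1$, after which $c_q=1/(1-\theta)$.

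To get the quantitative gap I would use \eqref{Fen1LB} to split $\nu=\epsilon\delta_1+(1-\epsilon)\rho$ with $\rho$ a probability measure (legitimate since $\nu_x\geq\epsilon$ and $t_x=1$). Writing $s:=\sum_y\rho_y t_y\geq 0$, one has $\sum_y\nu_y t_y=\epsilon+(1-\epsilon)s$ and $\sum_y\nu_y\phi(t_y)=(1-\epsilon)\sum_y\rho_y\phi(t_y)\geq(1-\epsilon)\phi(s)$ by ordinary Jensen, so it suffices to prove the one-variable estimate
\[
\phi\big(\epsilon+(1-\epsilon)s\big)\leq\theta(1-\epsilon)\,\phi(s)\qquad(s\geq 0).
\]
Let $h(s):=(1-\epsilon)\phi(s)-\phi(\epsilon+(1-\epsilon)s)$. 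Then $h(1)=0$ and $h'(s)=(1-\epsilon)\big[\phi'(s)-\phi'(\epsilon+(1-\epsilon)s)\big]$; since $\phi'$ is strictly increasing and $\epsilon+(1-\epsilon)s$ lies strictly between $s$ and $1$, the bracket is negative for $s<1$ and positive for $s>1$. Hence $h$ is strictly decreasing on $(0,1)$ and strictly increasing on $(1,\infty)$, so $h(s)>0$ for $s\neq 1$. The ratio $g(s):=\phi(\epsilon+(1-\epsilon)s)/\phi(s)$ thus satisfies $g(s)<1-\epsilon$ for $s\neq 1$; removing the singularity at $s=1$ (value $(1-\epsilon)^2$) and noting the finite values at $s=0$ and $s=+\infty$ (namely $\phi(\epsilon)/(q-1)$ and $(1-\epsilon)^q$, both $<1-\epsilon$), $g$ is continuous on the compact $[0,+\infty]$ and stays strictly below $1-\epsilon$, so $M:=\sup_sg(s)<1-\epsilon$ and $\theta:=M/(1-\epsilon)<1$ works.

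The main obstacle is exactly this strict improvement of Jensen: plain convexity only yields $\theta=1$ (i.e. $N_q\geq 0$), and the gain $\theta<1$ must come from \eqref{Fen1LB}, which forces a fixed mass $\epsilon$ of $\nu$ to sit at the point $t=1$ where both $\phi$ and $\phi'$ vanish. The monotonicity analysis of $h$ together with compactness of $[0,+\infty]$ is what turns this fixed diagonal mass into a constant $c_q$ uniform in $x$, $n$ and $f$. The only remaining bookkeeping is the degenerate set $\{u_n(x)=0\}$, handled by continuity (replacing $f$ by $f+\eta$ and letting $\eta\downarrow 0$) or by a direct Cauchy--Schwarz argument when $q=2$.
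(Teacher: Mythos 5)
Your proof is correct, and it rests on the same underlying mechanism as the paper's — normalize by $u_n(x)$, express $\tilde N_q$ and $\tilde N_q-N_q$ through a convex profile, and convert the diagonal mass $p(x,x)m(x)\geq\epsilon$ forced by \eqref{Fen1LB} into a quantitative Jensen gap at the point where the profile vanishes to second order — but the execution is genuinely different. The paper never introduces your single profile $\phi(t)=t^q-qt+(q-1)$: it uses the Taylor expansion with integral remainder \eqref{Fen1TaylorExp} to write both quantities as $q(q-1)g(x)^{2-q}\int_0^1(1-t)(\cdots)\,dt$ over the one-parameter family $\mathcal F_t(s)=s^2/(1+ts)^{2-q}$, checks each $\mathcal F_t$ is convex on $[-1,+\infty)$, and quantifies the gap by the scaling inequality $\mathcal F_t\bigl(s/(1-\epsilon)\bigr)\geq(1-\epsilon)^{-q}\mathcal F_t(s)$ after restricting the kernel to $\{y:\,g(y)\neq g(x)\}$, whose mass is at most $1-\epsilon$ (the same role your splitting $\nu=\epsilon\delta_1+(1-\epsilon)\rho$ plays). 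Your $\phi$ is exactly the paper's family integrated out, $\phi(1+s)=q(q-1)\int_0^1(1-t)\mathcal F_t(s)\,dt$, and your one-variable lemma $\phi\bigl(\epsilon+(1-\epsilon)s\bigr)\leq\theta(1-\epsilon)\phi(s)$, proved via the monotonicity of $h$ and compactness of $[0,+\infty]$, replaces the paper's homogeneity argument. The trade-off: the paper's scaling trick yields the explicit constant $c_q=\bigl(1-(1-\epsilon)^{q-1}\bigr)^{-1}$, while your compactness argument produces a $\theta(q,\epsilon)<1$ that is uniform in $x$, $n$, $f$ (all that is needed) but not explicit; in exchange, your route avoids the integral-remainder bookkeeping, and your direct Jensen proof of $N_q(P^nf)\geq0$ via $(Pu_n)^q\leq Pu_n^q$ is cleaner than the paper's derivation of positivity from the convexity of $\mathcal F_t$. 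One small remark: your handling of the degenerate set $\{u_n(x)=0\}$ (perturbing to $f+\eta$, or Cauchy--Schwarz when $q=2$) is sound, but the paper disposes of it more simply by noting that $N_q(g)(x)=[\tilde N_q-N_q]g(x)=0$ at such points.
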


\begin{proof} (Theorem \ref{Fen1Maintheo})

  Proposition \ref{Fen1propDungey} yields the pointwise estimate   
\[|\nabla P^n f|^2 \lesssim A( \tilde N_q (P^nf))\]
for $0\leq f \in L^\infty$,   so that, by Proposition \ref{Fen1propsup},   
\[\begin{split}
   (\tilde g_0  f)^2 & = \sum_{n\geq 0} |\nabla P^n f|^2 \\
& \lesssim \sum_{n\geq 0} A(N_q(P^nf)) \\
& \quad = A \left(\sum_{n\geq 0} N_q(P^n f)\right) = A \left(\tilde g_{0,q} f\right)^2.
  \end{split}\]
Theorem \ref{Fen1secondtheo} and \eqref{Fen1averaging} provide    the conclusion of    Theorem \ref{Fen1Maintheo} for all nonnegative functions $f$. 
We obtain then $L^q$-boundedness of $\tilde g_0$ by subadditivity of $\tilde g_0$. 
\end{proof}

It remains to prove Proposition \ref{Fen1propsup}.

\begin{proof} (Proposition \ref{Fen1propsup})

Taylor expansion of the function $t \mapsto t^q$,    $q\in (1,2]$,    gives
\begin{equation} \label{Fen1TaylorExp}
\begin{split}
t^q-s^q  &= q s^{q-1}(t-s) + q(q-1) \int_s^t \tau^{q-2} (t-\tau) d\tau \\
& = q s^{q-1}(t-s) + q(q-1)(t-s)^2 \int_0^1 \frac{(1-u)du}{((1-u)s+ut)^{2-q}} 
\end{split}
\end{equation}
for $t,s\geq 0$ with $s\neq t$. From this expansion, one has, for $q\in (1,2]$, $0\leq g\in L^\infty$ and $x\in \Gamma$, {\small
\begin{equation} \label{Fen1tildeNq}
\begin{split}
\tilde N_q(g)(x)  & = \sum_{y\in \Gamma} p(x,y)m(y) \left[qg(x)(g(x)-g(y))-g(x)^{2-q}(g(x)^q-g(y)^q) \right] \\
& = q(q-1) \sum_{y: \, g(y)\neq g(x)} p(x,y)m(y) (g(x)-g(y))^2 \int_0^1  \frac{(1-t)g(x)^{2-q}}{((1-t)g(x)+tg(y))^{2-q}}dt \\
& = q(q-1) g(x)^{2-q} \int_0^1 (1-t) \sum_{y: \, g(y)\neq g(x)} p(x,y)m(y) \frac{(g(x)-g(y))^2}{((g(x)+t(g(y)-g(x))^{2-q}}dt.
\end{split}
\end{equation} }
Let $0\leq f\in L^\infty$ and $n\in \N$. Define $g:=P^n f$ and notice $0\leq g\in L^\infty$. Therefore,
\[\begin{split}
   \dr_n (P^n f)(x) & = (P-I)g(x) = \sum_{y\sim x} p(x,y)m(y) (g(y)-g(x)) 
  \end{split}\]
and with \eqref{Fen1TaylorExp}, one has 
\[\begin{split}
   \dr_n (P^n f(x))^q - & q(P^n f(x))^{q-1} \dr_n (P^n f) \\ & = q(q-1)((P-I)g(x))^2 \int_0^1 \frac{(1-t)dt}{(g(x) + t (P-I)g(x))^{2-q}}
  \end{split}\] 
   so that   
\[\begin{split}
[\tilde N_q - N_q]g(x) & = g(x)^{2-q} \dr_n (P^n f(x))^q -    qg(x) \dr_n (P^n f)   \\
& = q(q-1) g(x)^{2-q}  \int_0^1 (1-t) \frac{((P-I)g(x))^2}{(g(x) + t (P-I)g(x))^{2-q}}dt
  \end{split} \]

If $g(x) = 0$, then $N_q(g)(x) = [\tilde N_q - N_q]g(x) = 0$,    therefore the conclusion of Proposition \ref{Fen1propsup} holds at $x$.     
Assume    now    that $g(x) \neq 0$.    Define, for all $y\in \Gamma$, $h(y) = \frac{g(y)-g(x)}{g(x)} \geq -1$ and,    for all $t\in (0,1)$ and all $s\in [-1,+\infty)$,    
\[\mathcal F_t (s) =\frac{s^2}{(1+ts)^{2-q}}.\] 
One has
\[\tilde N_q(g)(x) = q(q-1) g(x)^{2-q} \int_0^1 (1-t) \sum_{y: \, g(y)\neq g(x)} p(x,y)m(y) \mathcal F_t(h(y))dt\]
and
\[ [\tilde N_q - N_q]g(x) = q(q-1) g(x)^{2-q}  \int_0^1 (1-t) \mathcal F_t\left(\sum_{y: \, g(y)\neq g(x)} p(x,y)m(y)h(y)\right)dt.\]

   Assume for a while that it is known that $\mathcal F_t$ is convex on $[-1,+\infty)$ for all $t\in (0,1)$, and let us conclude the proof of Proposition \ref{Fen1propsup}.   
One has    $\tilde N_q(g)(x) \geq [\tilde N_q - N_q]g(x)$, which means that $N_q(g)(x) \geq 0$. 
Moreover,   since    $p(x,x) >\epsilon$, then $\sum_{y: \, g(y)\neq g(x)} \frac{p(x,y)m(y)}{1-\epsilon} \leq 1$,    so that    
\[\begin{split} \sum_{y: \, g(y)\neq g(x)} \frac{p(x,y)m(y)}{1-\epsilon} \mathcal F_t(h(y)) & \geq \mathcal F_t\left(\sum_{y: \, g(y)\neq g(x)} \frac{p(x,y)m(y)}{1-\epsilon}h(y)\right) \\ &\geq (1-\epsilon)^{-q} \mathcal F_t\left(\sum_{y: \, g(y)\neq g(x)} p(x,y)m(y)h(y)\right),\end{split}\]
where the first inequality is due to the convexity of ${\mathcal F}_t$ and the last one to the definition of ${\mathcal F}_t$. 
We deduce 
\[[\tilde N_q - N_q]g(x) \leq (1-\epsilon)^{q-1} \tilde N_q(g)(x),\]
which means
\[\tilde N_q(g)(x) \leq \frac{1}{1-(1-\epsilon)^{q-1}} N_q(g)(x).\]
\end{proof}

\noindent It remains to prove the following lemma

\begin{lem}
The function $\mathcal F_t$ is convex on $[-1,+\infty)$ for all $t\in (0,1)$.
\end{lem}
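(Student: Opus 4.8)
The plan is to prove convexity of $\mathcal{F}_t(s) = s^2(1+ts)^{q-2}$ on $[-1,+\infty)$ by computing its second derivative and checking that $\mathcal{F}_t''(s) \geq 0$ throughout, using $q \in (1,2]$ and $t \in (0,1)$. First I would note that on $[-1,+\infty)$ with $t \in (0,1)$ the quantity $1+ts$ is positive (since $1+ts \geq 1-t > 0$), so $\mathcal{F}_t$ is smooth and all the powers $(1+ts)^{q-2}$, $(1+ts)^{q-3}$, $(1+ts)^{q-4}$ appearing in the derivatives are well-defined and positive. I would then differentiate twice. Writing $w = 1+ts$ for brevity, the product/chain rule gives
\[
\mathcal{F}_t'(s) = 2s\, w^{q-2} + (q-2)t\, s^2 w^{q-3},
\]
and differentiating once more,
\[
\mathcal{F}_t''(s) = 2\,w^{q-2} + 4(q-2)t\, s\, w^{q-3} + (q-2)(q-3)t^2 s^2 w^{q-4}.
\]

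Next I would factor out the positive quantity $w^{q-4} = (1+ts)^{q-4}$ and reduce the sign question to that of a quadratic-type expression in $s$ (with $w = 1+ts$):
\[
\mathcal{F}_t''(s) = w^{q-4}\Big[ 2w^2 + 4(q-2)t\, s\, w + (q-2)(q-3)t^2 s^2 \Big].
\]
Substituting $w = 1+ts$ and expanding, the bracket becomes a genuine polynomial in $s$; the key observation I expect is that it simplifies nicely. Indeed, writing $ts = w-1$, I would replace each occurrence of $ts$ and collect terms, aiming to show the bracket equals $2 + 2q(q-1)(ts)^2 + \text{(a nonnegative middle term)}$ or some manifestly nonnegative combination. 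Since $q(q-1) > 0$ for $q \in (1,2]$, the pure squares contribute nonnegatively; the task is to control the cross term's sign.

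The main obstacle will be handling the cross term $4(q-2)t s w$, whose coefficient $(q-2)$ is negative on $(1,2)$, so the bracket is not obviously a sum of squares. I would address this by completing the square in $s$ (treating the bracket as a quadratic in the variable $ts$ after the substitution), showing its discriminant is nonpositive, or equivalently by rewriting the bracket as $2(1+qts)^2/\text{something}$ plus a nonnegative remainder. A clean route is to expand fully in terms of $u := ts \in [-t,+\infty) \subset (-1,+\infty)$: after substitution one should obtain
\[
2(1+u)^2 + 4(q-2)u(1+u) + (q-2)(q-3)u^2,
\]
and collecting the coefficients of $u^2$, $u$, and the constant yields $q(q-1)u^2 + 2(q-1)u + 2 = (q-1)(qu^2 + 2u) + 2$. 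Completing the square gives $(q-1)\big(\sqrt{q}\,u + \tfrac{1}{\sqrt q}\big)^2 + 2 - \tfrac{q-1}{q}$, and since $2 - \tfrac{q-1}{q} = \tfrac{q+1}{q} > 0$ and $(q-1) > 0$, the whole expression is strictly positive. Multiplying back by the positive factor $w^{q-4}$ shows $\mathcal{F}_t''(s) > 0$, which establishes convexity and completes the proof.
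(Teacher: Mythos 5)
Your overall strategy---computing $\mathcal F_t''$ directly and checking its sign---is sound and does prove the lemma, but your expansion contains an arithmetic slip, so the completed square you display is not correct. Your derivatives and the factorization $\mathcal F_t''(s) = w^{q-4}\bigl[2w^2 + 4(q-2)tsw + (q-2)(q-3)t^2s^2\bigr]$ are right, but with $u=ts$ the bracket expands to
\[
2(1+u)^2 + 4(q-2)u(1+u) + (q-2)(q-3)u^2 \;=\; q(q-1)u^2 + 4(q-1)u + 2,
\]
since the linear coefficient is $4+4(q-2)=4(q-1)$, not $2(q-1)$ as you wrote; consequently your remainder $\tfrac{q+1}{q}$ is not what the computation produces. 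The repair is immediate: completing the square correctly gives
\[
q(q-1)u^2 + 4(q-1)u + 2 \;=\; (q-1)\left(\sqrt{q}\,u + \frac{2}{\sqrt q}\right)^{2} + \frac{2(2-q)}{q},
\]
and both terms are nonnegative for $q\in(1,2]$ (equivalently, the discriminant is $16(q-1)^2-8q(q-1)=8(q-1)(q-2)\leq 0$). Hence $\mathcal F_t''\geq 0$ wherever $1+ts>0$, in particular on $[-1,+\infty)$, which is all convexity requires. One further nitpick: your claim of \emph{strict} positivity fails at $q=2$, where the bracket is $2(u+1)^2$ and vanishes at $u=-1$; that point lies outside the range $u=ts\geq -t>-1$, but in any case nonnegativity suffices.

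For comparison, the paper sidesteps carrying $t$ through the calculus: it sets $F(x)=\frac{x^2}{(1+x)^{2-q}}$, checks (by the same kind of second-derivative computation, i.e.\ your calculation at $t=1$) that $F$ is convex on $(-1,+\infty)$, and then uses the scaling identity $\mathcal F_t(s)=\frac{1}{t^2}F(ts)$ to conclude that $\mathcal F_t$ is convex on $\left(-\frac 1t,+\infty\right)\supset[-1,+\infty)$. The scaling observation buys a one-variable computation and makes transparent why the domain of convexity is actually the larger interval $\left(-\frac 1t,+\infty\right)$; your direct route is the same calculus with $t$ kept inside, and once the coefficient is corrected it is a complete proof.
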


\begin{proof}
Let $F(x) = \frac{x^2}{(1+x)^{2-q}}$.    Easy computations show that $F$ is convex on $(-1,+\infty)$. Since, for all $t\in (0,1)$, $\mathcal F_t = \frac{1}{t^2} F(tx)$, $\mathcal F_t$ is convex on $(-\frac{1}{t}, +\infty) \supset [-1,+\infty)$.   
\end{proof}

\subsection{Proof of Corollary \ref{Fen1NPnEstimates}}

First we will prove the    following    result. If $q\in (1,2]$, $n\in \N^*$    with $n\geq 1$    and $0 \leq f \in L^1 \cap L^\infty$, one has
\begin{equation} \label{Fen1NqPnEstimates}
\|N_q^{\frac{1}{2}}(P^n f)\|_q \leq \frac{c_q}{\sqrt n} \|f\|_{q} 
\end{equation}

\noindent Let $u_n = P^n f$ and $J_n := - (\dr_n + \Delta)(u_n^q)$. Then 
\begin{equation} \label{Fen1NqPn1} \begin{split}
   \|N_q^{\frac{1}{2}} (P^n f)\|_q^q & =    \sum_{x\in \Gamma} m(x) N_q^{q/2}(u_n)(x)   \\
& = \sum_{x\in \Gamma} m(x) u_n^{\frac{q(2-q)}{2}} J_n(x)^{q/2} \\
& \leq \left[ \sum_{x\in \Gamma} m(x) u_n(x)^q \right]^{\frac{2-q}{2}} \left[ \sum_{x\in \Gamma} J_n(x)m(x) \right]^{\frac{q}{2}}
  \end{split} \end{equation}
where the last step follows from H\"olde inequality. 
Yet,
\[\sum_{x\in \Gamma} m(x) u_n(x)^q = \|P^n f\|_q^q \leq \|f\|_q^q\]
and
\[\begin{split}
   \sum_{x\in \Gamma} J_n(x)m(x) & = - \sum_{x\in \Gamma} \dr_n (u_n^q)(x)m(x) \\
& \leq -q \sum_{x\in \Gamma} m(x) u_n^{q-1}(x) \dr_n u_n(x) \\
& \leq q \|u_n\|_q^{q/q'} \|\dr_n u_n\|_q
  \end{split}\]
where the first line   holds    because $\sum_{x\in \Gamma}\Delta g(x)m(x)= 0$ if $g\in L^1$, the second line follows from \eqref{Fen1Young}, and the third one from H\"older inequality again (with $\frac{1}{q}+\frac{1}{q'}=1$).
Here $\|u_n\|_q \leq \|f\|_q$ while $\|\dr_n u_n \|_q = \|\Delta u_n\|_q  \lesssim \frac{1}{n}\|f\|_q$ by the analyticity of $P$ on $L^q$. Thus
\[\sum_{x\in \Gamma} J_n(x)m(x) \lesssim \frac{1}{n} \|f\|_q^q\]

\noindent Substitution of the last two estimates in \eqref{Fen1NqPn1} gives
\[\|N_q^{\frac{1}{2}} (P^n f)\|_q \lesssim \frac{1}{\sqrt n}\|f\|_q,\]
 which ends the proof of \eqref{Fen1NqPnEstimates}.

\noindent Now just use Propositions \ref{Fen1propDungey} and \ref{Fen1propsup} to get Corollary \ref{Fen1NPnEstimates}.

\appendix

\section{Further estimates for Markov chains}

\subsection{Time regularity estimates } \label{Fen1tools2}

The theorem we prove here is slightly more general than (and clearly implies) Theorem \ref{Fen1MTDUE}.

\begin{theo} \label{Fen1MTDUE2}
Let $(\Gamma, \mu)$ be a weighted graph satisfying \eqref{Fen1LB}, \eqref{Fen1DV} and \eqref{Fen1DUE}. Then, for all $j\geq 0$, there exist two constants $C_j,c_j>0$ such that, for all $(r_1, \dots, r_j) \in \N^{*j}$ for all $l\geq \max_{i\leq j} r_{i}$ and all $x,y\in \Gamma$,
\[
  |(D(r_1)\dots D(r_j)p)_l(x,y)| \leq \frac{C_jr_1\dots r_j}{l^j V(x,\sqrt l)^{\frac{1}{2}}V(y,\sqrt l)^{\frac{1}{2}}} \exp\left( -c_j \frac{d^2(x,y)}{l}\right).
\]
\end{theo}

We first recall the following result (Lemma 2.1 in \cite{Dungey}).

\begin{lem} \label{Fen1lemTDUE}
Let $P$ be a power bounded and analytic operator in a Banach space $X$. For each $j\in \N$ and $p\in (1,+\infty)$, there exists a constant $c_j>0$ such that
\[\|(I-P^{r_1})(I-P^{r_2})\dots(I-P^{r_j})P^l\|_{p\to p}\leq c_j r_1\dots r_j l^{-j}\]
for all $j_1, \dots, j_k \in \N$, all $(r_1, \dots, r_j)\in \N^j$ and all $l\in \N^*$.
\end{lem}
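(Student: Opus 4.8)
The final statement to prove is Lemma~\ref{Fen1lemTDUE}, the operator-norm bound
\[
\|(I-P^{r_1})(I-P^{r_2})\cdots(I-P^{r_j})P^l\|_{p\to p}\leq c_j\, r_1\cdots r_j\, l^{-j},
\]
valid for a power-bounded analytic operator $P$ on a Banach space $X$. The plan is to reduce everything to the single-factor case $j=1$, i.e. to the estimate $\|(I-P^r)P^l\|_{p\to p}\lesssim r\,l^{-1}$, and then to iterate. The key structural input is analyticity of $P$: a power-bounded operator is called analytic when $\|(I-P)P^n\|_{p\to p}\leq C n^{-1}$ for all $n\geq 1$; this is precisely the hypothesis that replaces the fractional-derivative/semigroup analyticity used in the continuous setting, and it is the engine of the whole estimate.

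First I would treat the base case $j=1$. Factor $I-P^r=(I-P)(I+P+\cdots+P^{r-1})$, so that
\[
(I-P^r)P^l=(I-P)\sum_{s=0}^{r-1}P^{l+s}=\sum_{s=0}^{r-1}(I-P)P^{l+s}.
\]
Applying analyticity to each summand gives $\|(I-P)P^{l+s}\|_{p\to p}\leq C(l+s)^{-1}\leq C\,l^{-1}$ since $s\geq 0$, and summing the $r$ terms yields $\|(I-P^r)P^l\|_{p\to p}\leq C\,r\,l^{-1}$, which is the claim for $j=1$. This already isolates the only nontrivial analytic fact and turns the $I-P^r$ difference into a manageable telescoping sum.

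Next I would handle the general $j$ by distributing the power $P^l$ across the factors. The natural idea is to write $P^l=P^{l_1}P^{l_2}\cdots P^{l_j}$ with $l_i=\lfloor l/j\rfloor$ (up to adjusting one exponent so the $l_i$ sum to $l$), so that
\[
(I-P^{r_1})\cdots(I-P^{r_j})P^l=\prod_{i=1}^{j}\big[(I-P^{r_i})P^{l_i}\big].
\]
This regrouping is legitimate because all the operators $I-P^{r_i}$ and powers of $P$ commute, being polynomials in $P$. Now I would apply submultiplicativity of the operator norm together with the base case to each bracketed factor: since $l\geq\max_i r_i$ and $l_i\geq l/(2j)$ for $l$ large (with the small-$l$ range absorbed into the constant using power-boundedness), each factor obeys $\|(I-P^{r_i})P^{l_i}\|_{p\to p}\leq C\,r_i\,l_i^{-1}\leq C'\,r_i\,(j/l)$. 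Multiplying the $j$ estimates gives
\[
\Big\|\prod_{i=1}^{j}(I-P^{r_i})P^{l_i}\Big\|_{p\to p}\leq (C')^j j^j\, \frac{r_1\cdots r_j}{l^j},
\]
which is exactly the asserted bound with $c_j=(C')^j j^j$.

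The main obstacle is really only bookkeeping rather than a genuine analytic difficulty: I must verify that splitting $l$ into $j$ comparable pieces is valid, namely that each $l_i$ stays comparable to $l$ (so that $l_i^{-1}\lesssim l^{-1}$ with a $j$-dependent constant) and that the finitely many degenerate cases where some $l_i$ is too small to apply analyticity can be absorbed using $\|P^l\|_{p\to p}\leq C$ and $\|I-P^{r_i}\|_{p\to p}\leq 2C$. Care is also needed so that the constant $c_j$ depends only on $j$ (and on the power-bound and analyticity constants of $P$), not on the $r_i$ or on $l$; the argument above delivers this because the only $j$-dependence enters through the fixed number of factors and the fixed split of $l$. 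I would remark that this reduction to the one-difference analytic estimate is the standard route and is exactly the approach of Dungey, whose Lemma~2.1 is cited for this statement.
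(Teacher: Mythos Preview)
Your argument is correct and is exactly the standard one; the paper does not give its own proof of this lemma but simply quotes it as Lemma~2.1 of \cite{Dungey}, and the route there is precisely your factor-and-split: telescope $I-P^r=(I-P)\sum_{s=0}^{r-1}P^s$ to get the case $j=1$ from the analyticity bound $\|(I-P)P^n\|\lesssim n^{-1}$, then distribute $P^l$ into $j$ comparable blocks and multiply. One small remark: the hypothesis $l\geq\max_i r_i$ that you invoke in passing is neither assumed in the statement nor needed anywhere in your proof, since the single-factor estimate $\|(I-P^r)P^l\|\leq C\,r\,l^{-1}$ already holds for every $l\geq 1$ and every $r\geq 1$.
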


\begin{proof} let us now establish Theorem \ref{Fen1MTDUE2}.    We follow closely the proof of Theorem 1.1 of \cite{Dungey}, arguing by induction on $j$.\par
\noindent The case $j=0$ is obvious since the result is the assumption. 
\noindent The case $j=1$ and $r_1=1$ is the one proven by Dungey in \cite{Dungey} and we will just here verify that the proof for $j=1$ can be extended to all $j\in \N$.\par
\noindent Assume now that, for some $j\in \N$, the kernel $p_l$ satisfies for all $(r_1,\dots,r_j)\in \N^{*j}$ and all $l\geq \max_i {r_i}$
\begin{equation} \label{Fen1TDUEj}
  |D(r_j)\dots D(r_1)p_l(x,y)| \leq \frac{C_jr_1\dots r_j}{l^j V(x,\sqrt l)^{\frac{1}{2}}V(y,\sqrt l)^{\frac{1}{2}}} \exp\left( -c_j \frac{d^2(x,y)}{l}\right)
\end{equation}
where the constant $C_j$ depends only of the graph $\Gamma$ and   $j$.   \par
\noindent Let $(r_1,\dots,r_{j+1})\in \N^{*(j+1)}$. We then use the abstract identity (which can easily be proved by induction on $k$) for all linear operators $A$ and all $k\in \N$:\begin{equation} \label{Fen1Aformula}I-A = 2^{-(k+1)} (I-A^{2^{k+1}}) + \sum_{i=0}^k 2^{-(i+1)} (I-A^{2^i})^2 \end{equation}
where $I$ denotes the identity operator. Hence we have, applying \eqref{Fen1Aformula} with $(Au)_l = u_{l+r_{j+1}}$,
\[D(r_{j+1}) = 2^{-(k+1)} D(2^{k+1}r_{j+1}) + \sum_{i=0}^k 2^{-(i+1)} D(2^ir_{j+1})^2 \]
and if we apply this formula to $(D(r_j)\dots D(r_1)p)_l$, we obtain, for all $l\in \N$, $k\in \N$ and $x,y \in \Gamma$,
\begin{equation} \label{Fen1uio1} \begin{split}
|D(r_{j+1})\dots D(r_1)p_l(x,y)| & \leq 2^{-(k+1)} |D(2^{k+1}r_{j+1})D(r_{j})\dots D(r_1)p_l(x,y)| \\
& \qquad + \sum_{i=0}^k 2^{-(i+1)} |D(2^ir_{j+1})^2D(r_{j})\dots D(r_1)p_l(x,y)|.
\end{split}\end{equation}
Suppose that $0<2^k r_{j+1}\leq l$, hence $l+2^{k+1}r_{j+1} \leq 3l$ and \eqref{Fen1TDUEj} provides the estimate
\begin{equation} \label{Fen1uio2} \begin{split} 
   |D(2^{k+1}r_{j+1})& D(r_{j})\dots D(r_1)p_l(x,y)| \\ & \leq  |D(r_{j})\dots D(r_1)p_l(x,y)| +  |D(r_{j})\dots D(r_1)p_{l+2^{k+1}r_{j+1}}(x,y)| \\
& \leq C_j \frac{r_1\dots r_j}{l^j V(x,\sqrt l)^{\frac{1}{2}}V(y,\sqrt l)^{\frac{1}{2}}} \exp\left( -c_j \frac{d^2(x,y)}{3l}\right).  
 \end{split}\end{equation}
Besides, observe that 
\begin{equation} \label{Fen1uio4} \begin{split}
   |D(n)^2& D(r_{j}) \dots D(r_1)p_l(x,y)| \\ & = \|(I-P^n)^2(I-P^{r_j})\dots(I-P^{r_1})P^l\|_{L^1(\{y\})\to L^\infty(\{x\})} \\
& \leq \|P^{l_1}\|_{L^2\to L^\infty(\{x\})} \|(I-P^n)^2(I-P^{r_j})\dots(I-P^{r_1})P^{l_2}\|_{2\to2} \|P^{l_3}\|_{L^1(\{y\})\to L^2}.
  \end{split} \end{equation}
whenever $l=l_1+ l_2 +l_3$. Moreover, let us notice that for all $l_0\in \N^*$ and all $z\in \Gamma$, \eqref{Fen1DUE}   provides    
\begin{equation} \label{Fen1uio5} 
\begin{split} \|P^{l_0}\|_{L^2\to L^\infty(\{x\})} = \|P^{l_0}\|_{L^1(\{x\})\to L^2} & = \left( \sum_{y\in \Gamma} [p_{l_0}(x,y)]^2 m(y) \right)^{\frac{1}{2}} \\ & = p_{2l_0}(x,x)^{\frac{1}{2}} \\ & \leq \frac{C}{V(x,\sqrt l_0)^{\frac{1}{2}}}. \end{split}
\end{equation}

The two last results (\eqref{Fen1uio4} and \eqref{Fen1uio5})  combined with Lemma \ref{Fen1lemTDUE}  and the doubling property \eqref{Fen1DV} give, with $l_1,l_2,l_3 \sim \frac{l}{3}$
\begin{equation} \label{Fen1uio3}
 |D(n)^2D(r_{j})\dots D(r_1)p_l(x,y)| \leq C'_j \frac{n^2r_1\dots r_j}{l^{j+2} V(x,\sqrt l)^{\frac{1}{2}}V(y,\sqrt l)^{\frac{1}{2}}}.
\end{equation}
Collecting estimates \eqref{Fen1uio1}, \eqref{Fen1uio2} and \eqref{Fen1uio3} and using that $\ds\sum_{i=0}^k 2^{i-1} \leq 2^k$, we obtain
\[\begin{split}
   |D(r_{j+1})\dots D(r_1)p_l(x,y)| & \lesssim 2^{-(k+1)} \frac{r_1\dots r_j}{l^j V(x,\sqrt l)^{\frac{1}{2}}V(y,\sqrt l)^{\frac{1}{2}}} \exp\left( -c_j \frac{d^2(x,y)}{3l}\right) \\
& \qquad  + \sum_{i=0}^k 2^{-(i+1)} \frac{2^{2i}r_{j+1}^2r_j\dots r_1}{l^{j+2} V(x,\sqrt l)^{\frac{1}{2}}V(y,\sqrt l)^{\frac{1}{2}}} \\
& \lesssim 2^{-(k+1)} \frac{r_1\dots r_j}{l^j V(x,\sqrt l)^{\frac{1}{2}}V(y,\sqrt l)^{\frac{1}{2}}} \exp\left( -c_j \frac{d^2(x,y)}{3l}\right) \\
& \qquad + \frac{2^{k}r_1\dots r_j r_{j+1}^2}{l^{j+2} V(x,\sqrt l)} 
  \end{split}\]
for all $l\geq 1$, $k\in \N$ with $2^kr_{j+1} \leq l$.

We will now choose $k$   to obtain    the desired inequality. If  $l,j,x,y$  satisfy
\[l \exp\left(-c_j \frac{d^2(x,y)}{4l} \right) \geq r_{j+1}.\]
We choose $k$ such that
\[  2^kr_{j+1} \leq l \exp\left(-c_j \frac{d^2(x,y)}{4l} \right) <2^{k+1}r_{j+1}   \]
which gives
\[|D(r_{j+1})\dots D(r_1)p_l(x,y)| \lesssim \frac{r_1 \dots r_{j+1}}{l^{j+1} V(x,\sqrt l)^{\frac{1}{2}}V(y,\sqrt l)^{\frac{1}{2}}} \exp\left( -\frac{c_j}{12} \frac{d^2(x,y)}{l}\right).\]
In the other case, i.e. $l \exp\left(-c_j \frac{d^2(x,y)}{4l} \right) \leq r_{j+1}$, observe that by \eqref{Fen1TDUEj}
\[\begin{split}
   |D(r_{j+1})& \dots D(r_1)p_l(x,y)| \\ & \leq |D(r_{j})\dots D(r_1)p_l(x,y)| +  |D(r_{j})\dots D(r_1)p_{l+r_{j+1}}(x,y)| \\
& \leq C_j \frac{r_1 \dots r_j}{l^j V(x,\sqrt l)^{\frac{1}{2}}V(y,\sqrt l)^{\frac{1}{2}}} \exp\left( -c_j \frac{d^2(x,y)}{l+r_{j+1}}\right) \\
& \leq C_j \frac{r_1 \dots r_j}{l^j V(x,\sqrt l)^{\frac{1}{2}}V(y,\sqrt l)^{\frac{1}{2}}} \exp\left( -\frac{c_j}{2} \frac{d^2(x,y)}{l}\right) \\
& \leq C_j \frac{  r_1 \dots r_{j+1}  }{l^{j+1} V(x,\sqrt l)^{\frac{1}{2}}V(y,\sqrt l)^{\frac{1}{2}}} \exp\left( -\frac{c_j}{4} \frac{d^2(x,y)}{l}\right)
  \end{split}
\]
where the third line holds because $l\geq r_{j+1}$.
\end{proof}

\subsection{Gaffney-type inequalities}

\label{Fen1tools3}

  This paragraph is devoted to the proof of    Theorem \ref{Fen1GTineq}. Actually, we establish more general versions in Theorem \ref{Fen1GTinequalities} and Corollary \ref{Fen1GTinequalitiesbis}.   

\begin{theo} \label{Fen1GTinequalities}
Let $(\Gamma,\mu)$ be a weighted graph.  
Assume \eqref{Fen1LB}, \eqref{Fen1DV} and \eqref{Fen1DUE}. Then,     for all $j\in \N$, there exist $c,C>0$ such that for all $(r_1,\dots,r_j)\in \N^j$, for all sets $E,F\subset \Gamma$ and $x_0\in \Gamma$ such that $\sup\left\{d(x_0,y), \, y\in F \right\} \leq 3d(E,F)$ and all functions $f$ supported   in    $F$,
\begin{enumerate}[(i)]
\item $\displaystyle \| (I-P^{r_1})\dots(I-P^{r_j}) P^l f\|_{L^2(E)} \leq C \frac{r_1\dots r_j}{l^{j}} \frac{1}{V(x_0,\sqrt l)^{\frac{1}{2}}} e^{-c\frac{d(E,F)^2}{l}} \|f\|_{L^1(F)} $, 

for all $(r_1, \dots r_{j})\in \N^*$ and all $l\geq \max_{i\leq j} r_i$.
\item $\displaystyle \|\nabla (I-P^{r_1})\dots(I-P^{r_j}) P^l f\|_{L^2(E)} \leq C \frac{r_1\dots r_j}{l^{j+\frac{1}{2}}} \frac{1}{V(x_0,\sqrt l)^{\frac{1}{2}}} e^{-c\frac{d(E,F)^2}{l}} \|f\|_{L^1(F)}$, 

for all $(r_1, \dots r_{j})\in \N^*$ and all $l\geq \max_{i\leq j} r_i$.
\item $\displaystyle \|\nabla (I-P^{r_1})\dots(I-P^{r_j}) P^l f\|_{L^2(E)} \leq C \frac{r_1\dots r_j}{l^{j+\frac{1}{2}}} e^{-c\frac{d(E,F)^2}{l}} \|f\|_{L^2(F)}$, 

for all $(r_1, \dots r_{j})\in \N^*$ and all $l\geq \max_{i\leq j} r_i$.
\end{enumerate}
\end{theo}

\begin{cor} \label{Fen1GTinequalitiesbis}
Let $(\Gamma,\mu)$ be a weighted graph satisfying \eqref{Fen1LB}, \eqref{Fen1DV} and \eqref{Fen1DUE}. The conclusions of Theorem \ref{Fen1GTinequalities} still hold under any of    the following assumptions on $E,F,x_0$ and $l$:
\begin{enumerate}
 \item $\sup\left\{ d(x_0,y), \, y\in F \right\} \leq \sqrt l$,
 \item $\sup\left\{ d(x_0,x), \, x\in E \right\} \leq 3 d(E,F)$,
 \item $\sup\left\{ d(x_0,x), \, x\in E \right\} \leq \sqrt l$.
\end{enumerate}
\end{cor}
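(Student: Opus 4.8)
The plan is to reopen the proof of Theorem~\ref{Fen1GTinequalities} and notice that, before the final normalization of the volume factor, it really produces an estimate in which the volume is localized at the support set $F$ rather than at $x_0$. Each of the three alternative hypotheses will then be absorbed by a doubling comparison of the relevant volume with $V(x_0,\sqrt l)$, the polynomial loss being swallowed either by a constant (when the control is at scale $\sqrt l$) or by the Gaussian factor $e^{-c\frac{d(E,F)^2}{l}}$ (when the control is at scale $d(E,F)$).

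First I would record the two elementary summation estimates, both consequences of \eqref{Fen1DV} and \eqref{Fen1PDV}, obtained by splitting into dyadic annuli: for all $y\in\Gamma$ and $l\geq 1$,
\[
\sum_{x\in\Gamma}e^{-c\frac{d^2(x,y)}{l}}m(x)\lesssim V(y,\sqrt l)
\qquad\text{and}\qquad
\sum_{x\in\Gamma}\frac{e^{-c\frac{d^2(x,y)}{l}}}{V(x,\sqrt l)}m(x)\lesssim 1.
\]
I would also record the comparison: if $d(z_1,z_2)\leq A$, then $B(z_1,\sqrt l)\subset B(z_2,\sqrt l+A)$, so by \eqref{Fen1PDV} one has $V(z_1,\sqrt l)\lesssim(1+A/\sqrt l)^dV(z_2,\sqrt l)$. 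When $A\leq\sqrt l$ the factor $(1+A/\sqrt l)^d$ is a harmless constant, and when $A\leq 3d(E,F)$ it satisfies $(1+A/\sqrt l)^d\,e^{-c\frac{d(E,F)^2}{l}}\lesssim e^{-c'\frac{d(E,F)^2}{l}}$, since $t\mapsto(1+3t)^de^{-(c-c')t^2}$ is bounded on $[0,+\infty)$.

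Next, for the $L^1\to L^2$ estimates (i) and (ii) I would reduce to a kernel bound. The operator $T=(I-P^{r_1})\cdots(I-P^{r_j})P^l$ has a symmetric kernel $k(x,y)=(D(r_1)\cdots D(r_j)p)_l(x,y)$, and its $L^1(F)\to L^2(E)$ norm equals $\sup_{y\in F}\big(\sum_{x\in E}|k(x,y)|^2m(x)\big)^{1/2}$; for (ii) the sublinear quantity $\nabla Tf(x)$ is dominated by the linear operator with nonnegative kernel $\mathcal K(x,y)=\nabla_x[k(\cdot,y)](x)$, so the same reduction applies with $\mathcal K$ in place of $k$. Theorem~\ref{Fen1MTDUE2} (together with the gradient estimates, yielding an extra $l^{-1/2}$ for $\mathcal K$) gives $|k(x,y)|\lesssim\frac{r_1\cdots r_j}{l^{j}}(V(x,\sqrt l)V(y,\sqrt l))^{-1/2}e^{-c\frac{d^2(x,y)}{l}}$. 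Under the hypotheses bearing on $F$ (the original hypothesis of Theorem~\ref{Fen1GTinequalities} and assumption~$1$), I keep the factor $V(y,\sqrt l)^{-1}$ outside the sum over $x\in E$, bound the remaining sum by $e^{-c\frac{d(E,F)^2}{l}}$ with the second summation estimate, and replace $V(y,\sqrt l)$ by $V(x_0,\sqrt l)$ through the comparison, since $d(x_0,y)\leq 3d(E,F)$ (resp.\ $\leq\sqrt l$) for every $y\in F$. Under the hypotheses bearing on $E$ (assumptions~$2$ and $3$) I instead use $d(x_0,x)\leq 3d(E,F)$ (resp.\ $\leq\sqrt l$) to bound $V(x,\sqrt l)^{-1}\lesssim(1+A/\sqrt l)^dV(x_0,\sqrt l)^{-1}$ \emph{inside} the sum, after which the Gaussian sum $\sum_{x\in E}e^{-c\frac{d^2(x,y)}{l}}m(x)\lesssim e^{-c\frac{d(E,F)^2}{l}}V(y,\sqrt l)$ regenerates the factor $V(y,\sqrt l)$ that cancels the remaining $V(y,\sqrt l)^{-1}$, leaving exactly $V(x_0,\sqrt l)^{-1}e^{-c'\frac{d(E,F)^2}{l}}$. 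The inequality (iii) carries no volume factor, so the hypothesis on $x_0$ plays no role in its conclusion and it holds verbatim under all three assumptions.

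The main obstacle is precisely the asymmetry between $E$ and $F$ in assumptions~$2$ and $3$: these control the geometry of the target set $E$ while leaving the source set $F$ of possibly unbounded diameter. One therefore cannot reduce to Theorem~\ref{Fen1GTinequalities} directly (its hypothesis would force $\operatorname{diam}F\lesssim d(E,F)$), nor exchange $E$ and $F$ using the self-adjointness of $T$, because duality turns the $L^1\to L^2$ norm into the $L^2\to L^\infty$ norm and does not preserve the desired statement. The resolution is exactly the reordering described above, namely performing the volume substitution before the Gaussian summation, so that the summation itself supplies the complementary volume.
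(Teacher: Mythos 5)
Your direct kernel-level argument for conclusion (i) is correct and is a genuinely different route from the paper's (which, for assumptions 2 and 3, never touches kernels: it decomposes $F=\bigcup_i F_i$ into annuli --- $F_i=F\cap\{3^i d(E,F)\leq d(\cdot,E)<3^{i+1}d(E,F)\}$ for assumption 2, dyadic shells around $x_0$ of width comparable to $R=\sup_{x\in E}d(x,x_0)$ for assumption 3 --- checks $\sup_{y\in F_i}d(x_0,y)\leq 6\,d(E,F_i)$, applies Theorem \ref{Fen1GTinequalities} with constant $6$ to each piece, and sums the $L^1$ norms; assumption 1 is handled by rerunning the theorem's proof with the simpler comparisons $V(x_0,\sqrt l)\lesssim V(z,\sqrt l)$ and $m(F)\leq V(x_0,\sqrt l)$). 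However, there is a genuine gap in your conclusion (ii): the parenthetical claim that ``the gradient estimates yield an extra $l^{-1/2}$ for $\mathcal K$'' asserts a \emph{pointwise} Gaussian bound $\mathcal K(x,y)\lesssim \frac{r_1\cdots r_j}{l^{j+1/2}}\bigl(V(x,\sqrt l)V(y,\sqrt l)\bigr)^{-1/2}e^{-c\frac{d^2(x,y)}{l}}$. No such estimate is available under \eqref{Fen1DV}, \eqref{Fen1LB}, \eqref{Fen1DUE}: the paper only proves the \emph{integrated}, weighted $L^2$ estimate \eqref{Fen1lemGE}, which cannot be disintegrated into a per-point bound carrying the factor $V(x,\sqrt l)^{-1/2}$. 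Indeed, summing your claimed pointwise bound in $y$ would give $\|\nabla P^l\|_{\infty\to\infty}\lesssim l^{-1/2}$, i.e. $(GG_\infty)$, which the paper treats as a strictly additional hypothesis (needed for the $p>2$ theory) precisely because it does not follow from the standing assumptions. This is load-bearing throughout your (ii), and most seriously under assumptions 2 and 3, where your ``substitution inside the sum'' requires the per-point volume factor. A repair staying close to your scheme: keep \eqref{Fen1lemGE} with its factor $V(y,\sqrt l)^{-1/2}$, extract $e^{-c\,d(y,E)^2/(2l)}$ from the weight (legitimate since $d(x,y)\geq d(y,E)$ for all $x\in E$), note that under assumption 2 one has $d(x_0,y)\leq 3d(E,F)+d(y,E)\leq 4\,d(y,E)$, and absorb the doubling loss $(1+d(x_0,y)/\sqrt l)^{d/2}$ into that Gaussian before taking the supremum over $y\in F$; or simply use the paper's annulus reduction.

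Your one-line dismissal of (iii) is also unjustified. The statement of (iii) carries no volume factor, but its \emph{proof} in Theorem \ref{Fen1GTinequalities} uses the hypothesis: it is deduced from the $L^1\to L^2$ bound by Cauchy--Schwarz via $m(F)^{1/2}$ and the comparison \eqref{Fen1ghj}, which requires $F\subset B(x_0,3d(E,F))$. Under assumptions 2 and 3 the set $F$ is unconstrained and may have $m(F)=+\infty$, so (iii) does not hold ``verbatim''; it needs an argument, e.g. the annulus decomposition above in the regime $d(E,F)^2\geq l$ (where the Gaussian factors $e^{-c\,9^i d(E,F)^2/l}$ make the pieces summable in $L^2$), combined, in the regime $d(E,F)^2\leq l$, with the global bound $\|\nabla(I-P^{r_1})\cdots(I-P^{r_j})P^l\|_{2\to 2}\lesssim r_1\cdots r_j\, l^{-j-\frac 12}$, which follows from $\|\nabla g\|_{L^2}^2=\langle (I-P)g,g\rangle_{L^2}$ and the analyticity estimate of Lemma \ref{Fen1lemTDUE}.
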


The proof of Theorem \ref{Fen1GTinequalities} relies on:   

\begin{lem}  Let $(\Gamma, \mu)$ be a weighted graph satisfying \eqref{Fen1DV}, \eqref{Fen1LB} and \eqref{Fen1DUE}, then we have the following estimates: 
for all $j\in \N$, there exist $C_j,c_j>0$ such that for all $(r_1, \dots r_j)\in \N^{*j}$ and all $l\geq max_{i\leq j} r_i$
\begin{equation} \label{Fen1lemE} \sum_{y\in \Gamma} |\left(D(r_j)\dots D(r_1) p\right)_l(y,x)|^2 e^{c_j\frac{d(x,y)^2}{l}} m(y) \leq C_{j} \frac{  r_1^2\dots r_j^2  }{l^{2j} V(x,\sqrt l)}\end{equation}
and
\begin{equation} \label{Fen1lemGE} \sum_{y\in \Gamma} |\nabla_y \left(D(r_j)\dots D(r_1) p\right)_l(y,x)|^2 e^{c_j\frac{d(x,y)^2}{l}} m(y) \leq C_{j} \frac{  r_1^2\dots r_j^2  }{l^{2j+1} V(x,\sqrt l)}. \end{equation}
\end{lem}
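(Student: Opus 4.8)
The plan is to derive both estimates from the pointwise time-regularity bound of Theorem~\ref{Fen1MTDUE2}: the inequality \eqref{Fen1lemE} follows almost directly, while \eqref{Fen1lemGE} is reduced to \eqref{Fen1lemE} through a weighted Dirichlet-form identity.

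\emph{A summation estimate and proof of \eqref{Fen1lemE}.} First I would isolate the elementary fact that, for every $\gamma>0$,
\[
\sum_{y\in\Gamma}\frac{m(y)}{V(y,\sqrt l)}\,e^{-\gamma d(x,y)^2/l}\lesssim_\gamma 1
\]
uniformly in $x\in\Gamma$ and $l\geq 1$. This is proved by splitting $\Gamma$ into $B(x,\sqrt l)$ and the annuli $\{2^{k-1}\sqrt l\le d(x,y)<2^k\sqrt l\}$, $k\ge 1$: on the $k$-th annulus the inclusion $B(x,2^k\sqrt l)\subset B(y,2^{k+1}\sqrt l)$ together with \eqref{Fen1PDV} gives $V(y,\sqrt l)\gtrsim 2^{-(k+1)d}V(x,\sqrt l)$, while $V(x,2^k\sqrt l)\lesssim 2^{kd}V(x,\sqrt l)$, so that annulus contributes $\lesssim 2^{(2k+1)d}e^{-\gamma 4^{k-1}}$, and the series converges. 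Granting this, \eqref{Fen1lemE} is immediate: squaring the bound of Theorem~\ref{Fen1MTDUE2} and choosing $c_j$ to be, say, half the Gaussian rate there, the factor $e^{c_j d(x,y)^2/l}$ leaves a net decay $e^{-c d(x,y)^2/l}$; pulling out $r_1^2\cdots r_j^2\,l^{-2j}V(x,\sqrt l)^{-1}$ and applying the summation estimate yields the claim.

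\emph{Reduction of \eqref{Fen1lemGE}.} Fix $x$, write $u(y)=(D(r_j)\cdots D(r_1)p)_l(y,x)$ and $\phi(y)=e^{c_j d(x,y)^2/(2l)}$, so $\phi^2$ is the target weight. The key algebraic point is that applying $I-P$ in the variable $y$ shifts the time index, $(I-P)_y\,p_l(\cdot,x)=p_l(\cdot,x)-p_{l+1}(\cdot,x)=(D(1)p)_l(\cdot,x)$, and $I-P$ commutes with every $D(r_i)$; hence $(I-P)u=(D(1)D(r_j)\cdots D(r_1)p)_l(\cdot,x)$ is a difference of order $j+1$. Applying the already-proved \eqref{Fen1lemE} with the parameters $(1,r_1,\dots,r_j)$ gives $\|\phi\,(I-P)u\|_{L^2}\lesssim r_1\cdots r_j\,l^{-(j+1)}V(x,\sqrt l)^{-1/2}$, and \eqref{Fen1lemE} for order $j$ gives $\|\phi u\|_{L^2}\lesssim r_1\cdots r_j\,l^{-j}V(x,\sqrt l)^{-1/2}$; their product is precisely the right-hand side of \eqref{Fen1lemGE}.

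\emph{The Dirichlet-form identity and the main obstacle.} Starting from $\sum_y|\nabla_y u|^2\phi^2 m=\frac12\sum_{y,z}p(y,z)|u(y)-u(z)|^2\phi(y)^2m(y)m(z)$ and the splitting $\phi(y)(u(y)-u(z))=(u\phi)(y)-(u\phi)(z)-u(z)(\phi(y)-\phi(z))$, I would bound
\[
\sum_{y}|\nabla_y u|^2\phi^2 m\lesssim \|\nabla(u\phi)\|_{L^2}^2+\sum_{z}u(z)^2|\nabla\phi(z)|^2 m(z).
\]
Since $\|\nabla(u\phi)\|_{L^2}^2=\langle(I-P)(u\phi),u\phi\rangle$ and $(I-P)(u\phi)=\phi\,(I-P)u+E$ with $E(y)=\sum_z p(y,z)(\phi(y)-\phi(z))u(z)m(z)$, the Cauchy--Schwarz inequality together with the two bounds above controls the principal term $\langle\phi(I-P)u,\phi u\rangle$ by the target order $r_1^2\cdots r_j^2\,l^{-(2j+1)}V(x,\sqrt l)^{-1}$. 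The main obstacle is the \emph{weight commutators} $\langle E,u\phi\rangle$ and $\sum_z u^2|\nabla\phi|^2 m$, both governed by $\nabla\phi$. The point is that for neighbours $z\sim y$ one has $|d(x,y)^2-d(x,z)^2|\le 2d(x,z)+1$, whence $|\nabla\phi(z)|^2\lesssim \phi(z)^2(d(x,z)+1)^2 l^{-2}$; combined with the elementary inequality $(d+1)^2\lesssim l\,e^{\varepsilon d^2/l}$ this produces exactly the one missing factor $l^{-1}$, the surplus $e^{\varepsilon d^2/l}$ being absorbed by taking the weight constant $c_j$ small enough that $u^2\phi^2 e^{\varepsilon d^2/l}$ retains net Gaussian decay. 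The summation estimate of the first paragraph then closes \eqref{Fen1lemGE} with the correct power $l^{-(2j+1)}$. The delicate accounting of these Gaussian-weight commutators, and the need to shrink $c_j$ so that every polynomial-times-Gaussian sum converges, is where the real work lies.
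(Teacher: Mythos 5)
Your proposal is correct and is essentially the paper's own argument: the paper proves this lemma by citing Lemmas 4 and 7 of \cite{Russ} with Theorem \ref{Fen1MTDUE2} in place of \eqref{Fen1UE}, and those proofs are precisely your two steps --- annuli summation of the squared Gaussian bound for \eqref{Fen1lemE}, and a weighted Caccioppoli/Dirichlet-form computation for \eqref{Fen1lemGE} exploiting that $I-P$ applied in the space variable acts as the time difference $D(1)$, so that $(I-P)u$ is an order-$(j+1)$ difference controlled by \eqref{Fen1lemE} with parameters $(1,r_1,\dots,r_j)$. One execution detail you should make explicit: to extract the full power $l^{-(2j+1)}$ from the commutator $\langle E, u\phi\rangle$ you must first symmetrize it in $y,z$, rewriting it as $\frac{1}{2}\sum_{y,z}p(y,z)m(y)m(z)\,(\phi(y)-\phi(z))^{2}\,u(y)u(z)$, which is then dominated (via $|u(y)u(z)|\leq \frac{1}{2}(u(y)^{2}+u(z)^{2})$ and symmetry) by $\sum_{z}u(z)^{2}|\nabla\phi(z)|^{2}m(z)$ and handled exactly like your other commutator term, whereas a direct Cauchy--Schwarz pairing a single factor $(\phi(y)-\phi(z))$ against $\phi(y)u(y)$ only yields the insufficient order $l^{-(2j+\frac{1}{2})}$.
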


The proof of this Lemma is analogous to Lemmas 4 and 7 in \cite{Russ},  where we use the estimates in Theorem \ref{Fen1MTDUE2} instead of the estimate \eqref{Fen1UE}.

\begin{proof} (Theorem \ref{Fen1GTinequalities})

\noindent  \begin{enumerate}[(i)]
\item We can assume without loss of generality that $\|f\|_{L^1} = 1$. Then 
\[\begin{split}
  &\| (I-P^{r_1}) \dots (I-P^{r_j})P_l f\|_{L^2(E)}^2 \\
& = \sum_{x\in E} m(x) \left(\sum_{z\in F} \left(D(r_j)\dots D(r_1) p\right)_l(x,z)f(z)m(z) \right)^2 \\
& \leq \sum_{x\in E} m(x) \sum_{z\in F} |\left(D(r_j)\dots D(r_1) p\right)_l(x,z)|^2 |f(z)|m(z) \\
& \leq \exp\left( -c \frac{d(E,F)^2}{l} \right) \sum_{z\in F} |f(z)| m(z) \sum_{x\in E} m(x) |D(r_j)\dots D(r_1) p_l(x,z)|^2 \exp\left(c\frac{d(x,z)^2}{l}\right) \\
& \lesssim \frac{r_1^2\dots r_j^2}{l^{2j}} \exp\left( -c \frac{d(E,F)^2}{l} \right) \sum_{z\in F} |f(z)| m(z) \frac{1}{V(z,\sqrt l)} \\
& \lesssim \frac{r_1^2\dots r_j^2}{l^{2j} V(x_0,\sqrt l)} \exp\left( -c \frac{d(E,F)^2}{l} \right) 
  \end{split}\] 
where, for the $4^{th}$ line, we use  the estimate \eqref{Fen1lemE}  and, for the last line, the doubling property shows   
\begin{equation} \label{Fen1ghk} \frac{V(x_0,\sqrt l)}{V(z,\sqrt{l})} \leq \frac{V(z,\sqrt l +3d(E,F))}{V(z,\sqrt l)} \lesssim  \left( 1+ \frac{3d(E,F)}{\sqrt{l}}\right)^d \lesssim \exp\left(\frac{c}{2} \frac{d(E,F)^2}{l} \right) \end{equation}
which leads to the result (with a different value    of $c$).

\item Similar to (i) using \eqref{Fen1lemGE} instead of \eqref{Fen1lemE}.
\item This result is a consequence of (i). In fact,
\[\begin{split}
   \| \nabla (I-P^{r_1})\dots (I-P^{r_j}) P^l f\|_{L^2(E)} & \lesssim \frac{r_1\dots r_j}{l^{j+\frac{1}{2}}} \frac{1}{V(x_0,\sqrt l)^{\frac{1}{2}}} e^{-c\frac{d(E,F)^2}{l}} \|f\|_{L^1(F)} \\
& \quad \leq \frac{r_1\dots r_j}{l^{j+\frac{1}{2}}} \left(\frac{m(F)}{V(x_0,\sqrt l)}\right)^{\frac{1}{2}} e^{-c\frac{d(E,F)^2}{l}} \|f\|_{L^2(F)} \\
& \lesssim \frac{r_1 \dots r_j}{l^{j+\frac{1}{2}}} e^{-c\frac{d(E,F)^2}{l}} \|f\|_{L^2(F)}
  \end{split}\]
where, for the last line, the doubling property yields   
\begin{equation} \label{Fen1ghj} \dfrac{m(F)}{V(x_0,\sqrt{l})} \leq \dfrac{V(x_0,3d(E,F))}{V(x_0,\sqrt{l})} \lesssim \left(1+\frac{3d(E,F)}{\sqrt l}\right)^{d} \lesssim \exp\left(\frac{c}{2} \frac{d(E,F)^2}{l} \right) . \end{equation}
\end{enumerate}

\end{proof}

\begin{proof} (of Corollary \ref{Fen1GTinequalitiesbis})
\begin{enumerate}
 \item Under this assumption, the proof is analogous to the one of Theorem \ref{Fen1GTinequalities}, replacing \eqref{Fen1ghk} by 
\[\frac{V(x_0,\sqrt l)}{V(z,\sqrt{l})} \lesssim 1 \qquad \forall z\in F,\] 
which is provided by \eqref{Fen1DV} and \eqref{Fen1ghj} by
\[\frac{m(F)}{V(x_0,\sqrt{l})} \leq 1,\]
which is due to the fact that $F\subset B(x_0,\sqrt{l})$.   
\item Decompose $F = \ds \bigcup_{i\geq 0} F_i$, with
\[F_i = F \cap \{y\in \Gamma, \, 3^id(E,F)\leq d(y,E) < 3^{i+1}d(E,F)\}.\]
Remark that, if $F_i\neq \emptyset$,    
\[\sup_{y\in F_i} d(x_0,y) \leq (3+3^{i+1}) d(E,F) \leq (3+ 3^{1-i})d(E,F_i) \leq 6 d(E,F_i).\]
\noindent Let $T$ be one of the operators involved in the left-hand sides    in Theorem \ref{Fen1GTinequalities}. 
Let $c_T>0$ be such that,    for all $(\tilde E, \tilde F)$ such that $\ds \sup_{y\in \tilde F} d(x_0,y) \leq 6 d(\tilde E,\tilde F)$ and all $f$ supported in $\tilde F$, we have
\[\|Tf\|_{L^2(\tilde E)} \leq c_T e^{-c\frac{d(\tilde E,\tilde F)^2}{l}} \|f\|_{L^1(\tilde F)}.\]
(Remember that Theorem \ref{Fen1GTinequalities} can be proven with  constant $6$  instead of $3$.) Then, one has
\[\|Tf\|_{L^2(E)} \leq \sum_{i=0}^{+\infty} \|T(f\1_{F_i})\|_{L^2(E)} \leq c_T \sum_{i=0}^{+\infty} e^{-c\frac{ d(E,F_i)^2}{l}} \|f\|_{L^1(F_i)} \leq c_T e^{-c\frac{d(E,F)^2}{l}} \|f\|_{L^1(F)},\]
which proves the second point of the corollary (note that the above sum can be restricted to the indexes $i$ such that $F_i\neq \emptyset$).
\item Let $R = \sup_{x\in E} d(x,x_0)$. Decompose $F= \ds \bigcup_{i\geq 1} F_i$ with
\[F_1 = F \cap B(x_0,4R)\]
and if $i\geq 2$
\[F_i = F \cap \{y\in \Gamma, \, 2^i R <d(y,x_0) \leq 2^{i+1}R \}.   \]

Write 
\[\|Tf\|_{L^2(E)} \leq \sum_{i=1}^{+\infty} \|T(f\1_{F_i})\|_{L^2(E)}\]
(where $T$ is one of the sublinear operators of Theorem \ref{Fen1GTinequalities} and $f$ is supported in $F$). We want to estimate each $\|T(f\1_{F_i})\|_{L^2(E)}$. First, notice that $\ds \sup_{x\in F^1} \{d(x,x_0)\} \leq 4R \leq 4\sqrt l$. Use then point 1 of Corollary \ref{Fen1GTinequalitiesbis} to obtain
\[\begin{split}
   \|T(f\1_{F_1})\|_{L^2(E)} & \leq c_T e^{-c\frac{d(E,F_1)^2}{l}} \|f\|_{L^1(F_1)} \\
& \leq c_T e^{-c\frac{d(E,F)^2}{l}} \|f\|_{L^1(F_1)}.
  \end{split}\]
  Next,    remark that, if $i\geq 2$ and since $d(E,F_i) \geq (2^i-1)R$,
\[\sup_{x\in F_i} \{d(x,x_0)\} \leq 2^{i+1}R \leq \frac{2^{i+1}}{2^i-1} d(E,F_i) \leq 3 d(E,F_i).\]
Hence, using Theorem \ref{Fen1GTinequalities}, one has
\[\begin{split}
   \|T(f\1_{F_i})\|_{L^2(E)} & \leq c_T e^{-c\frac{d(E,F_i)^2}{l}} \|f\|_{L^1(F_i)} \\
& \leq c_T e^{-c\frac{d(E,F)^2}{l}} \|f\|_{L^1(F_i)}.
  \end{split}\]
\end{enumerate} 
Summing up over $i$ yields the desired conclusion.   
\end{proof}

\section{Estimates for the Taylor coefficients of $(1-z)^{-\beta}$}

\begin{lem} \label{Fen1DSE1}
Let $\gamma >-1$. Let $\ds \sum_{l \geq 1} a_l z^l$ be the Taylor series of the function $\frac{1}{(1-z)^{\gamma + 1}}$. We have 
\[a_l \simeq  l^{\gamma} \qquad \forall l\in \N^*.\]
A consequence of this result is
\[\sum_{l \geq 0} l^{\gamma} z^l \simeq \dfrac{z}{(1-z)^{\gamma + 1}} \qquad  \forall z\in [0,1).\]
\end{lem}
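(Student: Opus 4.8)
The plan is to compute the coefficients $a_l$ explicitly, read off their size from the resulting product, and then transfer a pointwise comparison of coefficients into a comparison of the two generating functions. First I would invoke the generalized binomial series: for $\gamma>-1$ and $|z|<1$,
\[
\frac{1}{(1-z)^{\gamma+1}}=\sum_{l\geq 0}a_l z^l,\qquad a_l=\frac{(\gamma+1)(\gamma+2)\cdots(\gamma+l)}{l!}=\prod_{k=1}^{l}\left(1+\frac{\gamma}{k}\right),
\]
with the convention $a_0=1$, so that the asserted equivalence $a_l\simeq l^\gamma$ concerns the indices $l\geq 1$. Since $\gamma>-1$, each factor $1+\gamma/k$ (for $k\geq 1$) is strictly positive, whence $a_l>0$ for every $l$.

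Next I would establish $a_l\simeq l^\gamma$ by taking logarithms of the product, writing $\log a_l=\sum_{k=1}^{l}\log\left(1+\frac{\gamma}{k}\right)$. Using $\log(1+\gamma/k)=\gamma/k+O(k^{-2})$ together with $\sum_{k\leq l}k^{-1}=\log l+O(1)$ and the convergence of $\sum_k k^{-2}$, one gets $\log a_l=\gamma\log l+C+o(1)$ for some finite constant $C$, hence $a_l\sim e^{C}l^\gamma$ as $l\to+\infty$ (this is just Stirling's formula for the ratio of factorial-type products). Combining this equivalence for large $l$ with the strict positivity of the finitely many remaining terms yields constants $c_1,c_2>0$ with
\[
c_1\,l^\gamma\leq a_l\leq c_2\,l^\gamma\qquad\forall l\geq 1,
\]
which is the first assertion.

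For the consequence I would transfer these bounds to the power series. Since $z^l\geq 0$ for $z\in[0,1)$, the inequalities above may be summed termwise against $z^l$, giving
\[
\sum_{l\geq 1}l^\gamma z^l\simeq\sum_{l\geq 1}a_l z^l=\frac{1}{(1-z)^{\gamma+1}}-1=\frac{1-(1-z)^{\gamma+1}}{(1-z)^{\gamma+1}}.
\]
It then remains to check that $1-(1-z)^{\gamma+1}\simeq z$ on $[0,1)$: the function $z\mapsto\bigl(1-(1-z)^{\gamma+1}\bigr)/z$ is continuous and strictly positive on $(0,1)$, converges to $\gamma+1>0$ as $z\to0^+$ and to $1$ as $z\to1^-$, and is therefore bounded above and below by positive constants on $[0,1)$. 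This produces the stated equivalence $\sum_{l\geq 1}l^\gamma z^l\simeq z/(1-z)^{\gamma+1}$.

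The step requiring the most care is this last one: the whole point of the factor $z$ in the numerator is the correct behaviour at the two endpoints, and one must not forget to subtract the constant term $a_0=1$ when passing from the full Taylor series to the sum over $l\geq 1$ — otherwise one would wrongly compare with $1/(1-z)^{\gamma+1}$, which already fails near $z=0$. The asymptotic analysis of $a_l$, by contrast, is entirely standard.
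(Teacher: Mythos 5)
Your proof is correct and follows essentially the same route as the paper's: the explicit product formula $a_l=\prod_{k=1}^{l}\left(1+\frac{\gamma}{k}\right)$, the logarithmic comparison $\log a_l=\gamma\log l+O(1)$ via the convergent series $\sum_k\left\vert\log(1+\gamma/k)-\gamma/k\right\vert\lesssim\sum_k\gamma^2/k^2$, and the termwise transfer to $\frac{1}{(1-z)^{\gamma+1}}-1=\frac{1-(1-z)^{\gamma+1}}{(1-z)^{\gamma+1}}$. Your only addition is to justify the final equivalence $1-(1-z)^{\gamma+1}\simeq z$ on $[0,1)$ by the endpoint limits $\gamma+1$ and $1$, a step the paper asserts without comment, so if anything your write-up is slightly more complete.
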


\begin{proof}
For $|z|<1$, the holomorphic function $\frac{1}{(1-z)^{\gamma + 1}}$ is equal to    its Taylor series,   
\[\dfrac{1}{(1-z)^{\gamma + 1}} = \sum_{l\geq 0}  \prod_{i=1}^l \left(1+\frac{\gamma}{i}\right) z^l \qquad \forall z\in B_{\C}(0,1).\]
Let us check that
\begin{equation} \label{Fen1fgh} l^\gamma \simeq  \prod_{i=1}^l \left(1+\frac{\gamma}{i}\right) \qquad \forall l\geq 1 .\end{equation}
Indeed, one can write
\[\begin{split}
   \ln \prod_{i=1}^l \left(1+\frac{\gamma}{i}\right) & = \sum_{i=1}^l \ln \left(1+\frac{\gamma}{i}\right)  \\
& = \gamma \sum_{i=1}^l \frac{1}{i} + \sum_{i=1}^l \left[ \ln \left(1+\frac{\gamma}{i}\right) - \frac{\gamma}{i}.\right]
  \end{split}\]
Yet, one has
\[\begin{split}
   \left| \sum_{i=1}^l \left[ \ln \left(1+\frac{\gamma}{i}\right) - \frac{\gamma}{i} \right] \right| & \leq \sum_{i=1}^\infty \left| \ln \left(1+\frac{\gamma}{i}\right) - \frac{\gamma}{i} \right| \\
& \lesssim \sum_{i=1}^\infty \frac{\gamma^2}{i^2} <+\infty.
  \end{split}\]
Hence we get
\[ \ln \prod_{i=1}^l \left(1+\frac{\gamma}{i}\right) = \gamma \ln l + O(1),\]
which yields \eqref{Fen1fgh} by applying the exponential map.\par
\noindent From the last result, and since the convergence radius of the series under consideration are 1, we deduce
\[\begin{split}
   \sum_{l \geq 1} l^\gamma z^l & \simeq \sum_{l\geq 1}  \prod_{i=1}^l \left(1+\frac{\gamma}{i}\right) z^l \qquad \forall z\in [0,1) \\
& \quad = \dfrac{1}{(1-z)^{\gamma + 1}} -1 \\
& \quad = \dfrac{1 - (1-z)^{\gamma + 1}}{(1-z)^{\gamma + 1}} \\
& \simeq \dfrac{z}{(1-z)^{\gamma + 1}}.
  \end{split}\]
\end{proof}

\section{Reverse Hölder estimates \\ for sequences}

For all $M>0$, define the following sets of sequences
\[E_M = \left\{ (a_n)_{n\geq 1}, \ \forall n, \, 0 \leq a_n \leq M \sum_{k\in \N^*} \frac{1}{k} a_k \right\}\]
and
\[\tilde E_M = \left\{  (a_n)_{n\geq 1} , \ \forall n, \, 0 \leq a_n \leq M \sum_{k\geq n} \frac{1}{k} a_k \right\}.\]

First, we state this obvious lemma:
\begin{lem} \label{Fen1L2L1}
\[\left( \sum_{n\geq 1} \frac{1}{n} a_n^2 \right)^{\frac{1}{2}} \leq M^{\frac{1}{2}} \sum_{n\geq 1} \frac{1}{n} a_n  \qquad \forall (a_n)_n \in E_M. \]
\end{lem}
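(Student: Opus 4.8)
The plan is to reduce everything to a single pointwise bound coming from the definition of $E_M$. Writing $S := \sum_{k\geq 1} \frac 1k a_k$ for the common right-hand quantity, the membership $(a_n)_n \in E_M$ says precisely that $0\le a_n \le M S$ for every $n$. The target inequality, after squaring both sides, is $\sum_{n\geq 1}\frac 1n a_n^2 \le M S^2$, so the whole statement is an estimate on the weighted $\ell^2$ norm of $(a_n)$ in terms of its weighted $\ell^1$ norm $S$.

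The key step is to split one factor off each square: since $a_n\ge 0$, I would write $a_n^2 = a_n\cdot a_n$ and bound exactly one of the two factors by the uniform bound $a_n \le M S$, keeping the other factor intact. This gives
\[
\sum_{n\geq 1}\frac 1n a_n^2 \;=\; \sum_{n\geq 1}\frac 1n a_n\, a_n \;\le\; M S \sum_{n\geq 1}\frac 1n a_n \;=\; M S\cdot S \;=\; M S^2 .
\]
Taking square roots then yields
\[
\left(\sum_{n\geq 1}\frac 1n a_n^2\right)^{\frac 12} \;\le\; M^{\frac 12} S \;=\; M^{\frac 12}\sum_{n\geq 1}\frac 1n a_n ,
\]
which is exactly the claimed inequality.

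There is essentially no obstacle here, which is why the paper labels the lemma ``obvious'': the nonnegativity of the $a_n$ is what lets me factor the square without introducing absolute values, and the defining inequality of $E_M$ is uniform in $n$, so it pulls straight out of the sum as the constant $MS$. The only point worth a moment's care is the degenerate case $S=+\infty$, in which the right-hand side is infinite and the inequality holds trivially; otherwise $S<\infty$ and all the manipulations above are between finite nonnegative quantities, so no convergence subtlety arises.
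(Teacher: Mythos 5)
Your proof is correct and is precisely the one-line argument the paper has in mind when it calls the lemma ``obvious'': bound one factor of $a_n^2$ by $M\sum_k \frac{1}{k}a_k$ uniformly in $n$, pull it out of the sum, and take square roots. Your remark on the degenerate case $\sum_k \frac{1}{k}a_k = +\infty$ is a sensible extra precaution but changes nothing of substance.
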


\noindent Let $\mathcal A = \{(A^{k,r,j}_l)_{l\in \N^*}, \, k \in \N, \, r \in \N^*, \, j\geq 2 \}$, where
\[A_l^{k,r,j} = l^{\beta-\eta} \sup_{s\in \bb 0,nr^2\bn} \left\{ \dfrac{\exp\left( -c \frac{4^j r^2}{l+k+s}\right)}{(l+k+s)^{1+n}} \right\}.\]
The parameters $\beta$ and $\eta$ are chosen as in section \ref{Fen1LPtheory} and therefore $\beta-\eta \in (0,1]$.

\begin{prop} \label{Fen1l2l1}
 There exists $M>0$ such that $\mathcal A \subset E_M$.
\end{prop}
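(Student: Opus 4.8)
The plan is to unwind the definition of $E_M$: showing $\mathcal A \subset E_M$ amounts to producing a constant $M$, depending only on $n$, $\alpha := \beta - \eta \in (0,1]$ and $c$, such that for every $k \in \N$, $r \in \N^*$, $j \geq 2$ and every $l \geq 1$,
\[A_l^{k,r,j} \leq M \sum_{m \geq 1} \frac 1m A_m^{k,r,j}.\]
I fix $k,r,j$ and set $T = 4^j r^2$ and $\phi(t) = t^{-(1+n)} e^{-cT/t}$, so that $A_l^{k,r,j} = l^\alpha \sup_{0 \le s \le nr^2} \phi(l+k+s)$ with $s$ ranging over the integers $0 \le s \le nr^2$. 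The strategy is to bound the left-hand side by the \emph{maximum} of the sequence, and then bound that maximum from below by the weighted sum; since $A_l^{k,r,j} \le \max_{l'} A_{l'}^{k,r,j}$ trivially, it suffices to establish $\max_{l'} A_{l'}^{k,r,j} \lesssim \sum_m \frac 1m A_m^{k,r,j}$.

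First I would eliminate the supremum over $s$ at the level of the maximum. Writing $\psi(m) := m^\alpha \phi(m+k)$, the inequality $l^\alpha \phi(l+k+s) \le (l+s)^\alpha \phi((l+s)+k) = \psi(l+s)$, valid because $l \mapsto l^\alpha$ is nondecreasing and $l+s$ is a positive integer, together with the trivial bound $A_l^{k,r,j} \ge \psi(l)$, shows that
\[\max_{l'} A_{l'}^{k,r,j} = \max_{m \ge 1} \psi(m) =: \bar A.\]
Thus the supremum over $s$ is harmless for the maximum, and the problem reduces to the clean one-parameter estimate $\bar A \lesssim \sum_{m} \frac 1m \psi(m)$, which will suffice because $\sum_m \frac 1m A_m^{k,r,j} \ge \sum_m \frac 1m \psi(m)$.

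To prove $\bar A \lesssim \sum_m \frac 1m \psi(m)$ I would pick an integer maximizer $m_0 \ge 1$ of $\psi$ (it exists since $\psi > 0$ and $\psi(m) \to 0$ as $m \to \infty$) and show that $\psi$ loses at most a fixed factor over the block $m_0 \le m \le 2m_0$. Indeed, for such $m$ each factor is controlled in the right direction: $m^\alpha \ge m_0^\alpha$, while $m \ge m_0$ gives $e^{-cT/(m+k)} \ge e^{-cT/(m_0+k)}$ and $(m+k)^{-(1+n)} \ge (2m_0+k)^{-(1+n)} \ge 2^{-(1+n)}(m_0+k)^{-(1+n)}$. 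Hence $\psi(m) \ge 2^{-(1+n)} \psi(m_0) = 2^{-(1+n)} \bar A$ on this block, and since $\sum_{m_0 \le m \le 2m_0} \frac 1m \ge \ln 2$, I obtain $\sum_m \frac 1m \psi(m) \gtrsim \bar A$. Combined with the reduction above, this yields the claim with $M$ depending only on $n$ and $\alpha$.

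The only genuinely delicate point is the reduction step: one must verify that absorbing the admissible shift $s$ into the index and comparing $l^\alpha$ with $(l+s)^\alpha$ really identifies $\max_{l'} A_{l'}^{k,r,j}$ with $\bar A$, so that the supremum over $s$ never manufactures a value not already dominated by a single $\psi(m)$. Everything else is a routine unimodality-and-block argument; in particular no fine analysis of the location of $m_0$ (which sits at scale $\max(k,T)$) is required, since the block comparison is uniform in $k,r,j$.
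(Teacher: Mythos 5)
Your proof is correct, and it takes a genuinely different route from the paper's. The paper builds a small theory: it introduces the tail class $\tilde E_M$ (defined by $a_n \leq M\sum_{k\geq n}\frac{1}{k}a_k$), proves that $\tilde E_M$ is stable under pointwise suprema and under the weighted shift $\rho_\lambda$ with $\lambda_l=l^{\beta-\eta}$ (Lemma \ref{Fen1l2l1b}), shows that the class $A^\alpha_c$ of sequences that increase up to $n_0$ and then decay like $(n_0/n)^\alpha$ sits inside $\tilde E_M$ (Lemma \ref{Fen1ssensEM}), and finally verifies only that the unshifted sequences $\mathcal A_0$ (i.e. $k=s=0$) lie in some $A^\alpha_c$, via a calculus analysis of $F(t)=t^{\gamma}e^{-d/t}t^{-(n+1)}$; the parameters $k$ and $s$ are then reinstated abstractly through the shift and supremum operations. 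You instead handle $k$ and $s$ by hand: you absorb the shift $s$ into the index via $l^\alpha\phi(l+k+s)\leq (l+s)^\alpha\phi((l+s)+k)=\psi(l+s)$ (using only $\alpha\geq 0$, and $A_l^{k,r,j}\geq\psi(l)$ from $s=0$), and you compare the global maximum $\bar A=\psi(m_0)$ with the harmonic sum over the block $m_0\leq m\leq 2m_0$, bounding each of the three factors of $\psi$ termwise from below; this needs no location information on $m_0$ and no unimodality at all, despite your closing remark. All the steps check out: $\psi>0$ and $\psi(m)\to 0$ (since $\alpha\leq 1<1+n$) give the integer maximizer, and $\sum_{m=m_0}^{2m_0}\frac 1m\geq\ln 2$ is correct. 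As for what each approach buys: the paper's machinery proves the strictly stronger tail-sum property $\tilde E_M$ (not actually needed for Lemma \ref{Fen1L2L1}, which only uses $E_M$) and is packaged for reuse; your argument is shorter and self-contained, yields the explicit constant $M=2^{1+n}/\ln 2$ uniform in $k,r,j$, and, notably, never uses $\beta-\eta\leq 1$ — whereas the paper's $\rho_\lambda$-stability requires $\lambda_n/n$ nonincreasing — so it applies verbatim, with $2^{-(n+\frac 12)}$ in place of $2^{-(1+n)}$, to the sequences $B_l^{k,r,j}$ of Remark \ref{Fen1careful} and indeed to any exponent $\alpha\in[0,n+1)$.
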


In order to prove Proposition \ref{Fen1l2l1}, we will need the following Lemmata:

\begin{lem} \label{Fen1l2l1b}
One has the next three  results:
\begin{enumerate}[i.]
\item $\tilde E_M \subset E_M,$
\item  if $M>0$ and $\{(a_n^p)_n, \ p\in I\}$ is a set of sequences such that for all $p\in I$, $(a^p_n)_n\in \tilde E_M$, then $\ds\left(\sup_{p\in I} a_n^p\right)_n \in \tilde E_M$, 
\item For a positive sequence $\lambda$, we define,   for all sequences $a\in \R^N$, $\rho_\lambda(a)$ by   
\[ [\rho_\lambda(a)]_n = \frac{\lambda_n}{\lambda_{n+1}} a_{n+1}.\]
Then, if $(\lambda_n)_n$ is non decreasing and $\ds \left( \frac{\lambda_n}{n}\right)_n$ is non increasing, $\tilde E_M$ is stable by $\rho_\lambda$.
\end{enumerate}
\end{lem}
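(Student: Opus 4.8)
The plan is to dispatch the three assertions in order, unwinding in each case the defining inequality of $\tilde E_M$; only the last one involves a genuine computation, the first two being formal.

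For (i), I would note that $E_M$ and $\tilde E_M$ are defined by bounds of the same shape, the only difference being that $\tilde E_M$ restricts the control sum to indices $k\geq n$. Since every $a_k$ is non-negative, $\sum_{k\geq n}\frac1k a_k\leq \sum_{k\geq 1}\frac1k a_k$, so the bound defining $\tilde E_M$ is the stronger one, and $(a_n)_n\in\tilde E_M$ forces $a_n\leq M\sum_{k\geq 1}\frac1k a_k$, i.e. $(a_n)_n\in E_M$. For (ii), put $b_n=\sup_{p\in I}a_n^p$. For each fixed $p$, membership $(a^p_n)_n\in\tilde E_M$ gives $a_n^p\leq M\sum_{k\geq n}\frac1k a_k^p\leq M\sum_{k\geq n}\frac1k b_k$, using $a_k^p\leq b_k$ termwise; taking the supremum over $p$ on the left yields $b_n\leq M\sum_{k\geq n}\frac1k b_k$, and $b_n\geq 0$ is clear, so $(b_n)_n\in\tilde E_M$.

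The substance is in (iii). Writing $b_n=\frac{\lambda_n}{\lambda_{n+1}}a_{n+1}$, I would first record the two consequences of the hypotheses: $\lambda$ non-decreasing gives $\frac{\lambda_n}{\lambda_{n+1}}\leq 1$, while $(\lambda_n/n)_n$ non-increasing gives $\frac{\lambda_{k-1}}{\lambda_k}\geq \frac{k-1}{k}$ for all $k\geq 2$. Applying $(a_n)_n\in\tilde E_M$ at index $n+1$ yields $b_n\leq \frac{\lambda_n}{\lambda_{n+1}}M\sum_{k\geq n+1}\frac1k a_k$, while a reindexing of the target sum gives
\[ M\sum_{k\geq n}\frac1k b_k = M\sum_{k\geq n}\frac1k\frac{\lambda_k}{\lambda_{k+1}}a_{k+1} = M\sum_{k\geq n+1}\frac{1}{k-1}\frac{\lambda_{k-1}}{\lambda_k}a_k. \]
Since all $a_k\geq 0$, it then suffices to compare the two expressions term by term, i.e. to check for each $k\geq n+1$ that $\frac{\lambda_n}{\lambda_{n+1}}\frac1k\leq \frac{1}{k-1}\frac{\lambda_{k-1}}{\lambda_k}$, equivalently $\frac{\lambda_n}{\lambda_{n+1}}\frac{k-1}{k}\leq \frac{\lambda_{k-1}}{\lambda_k}$. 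This follows immediately from the two recorded facts: $\frac{\lambda_n}{\lambda_{n+1}}\leq 1$ makes the left-hand side $\leq \frac{k-1}{k}\leq \frac{\lambda_{k-1}}{\lambda_k}$. Hence $b_n\leq M\sum_{k\geq n}\frac1k b_k$, so $\tilde E_M$ is stable under $\rho_\lambda$.

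I expect the only delicate point to be the bookkeeping in (iii): one must shift the index in $\sum_{k\geq n}\frac1k b_k$ so that it aligns, term by term, with the bound produced by $a_{n+1}\leq M\sum_{k\geq n+1}\frac1k a_k$, and then recognize that the conjunction of the two monotonicity assumptions is precisely what dominates the comparison. This is a mild obstacle rather than a real difficulty; parts (i) and (ii) are purely formal.
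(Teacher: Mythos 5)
Your proof is correct and follows essentially the same route as the paper, which also reduces (iii) to the term-by-term comparison coming from $\frac{\lambda_n}{\lambda_{n+1}}\leq 1$ and $\frac{\lambda_{k-1}}{\lambda_k}\geq\frac{k-1}{k}$ (the paper merely shifts the index in the opposite direction, substituting $a_{k+1}=\frac{\lambda_{k+1}}{\lambda_k}[\rho_\lambda(a)]_k$ rather than reindexing the target sum). Your explicit treatment of (i) and (ii), which the paper dismisses as easy, is also correct.
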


\begin{proof} (i) and (ii) are easy to prove. Let us check (iii).

Since $(a_n)_n \in \tilde E_M$, we have for all $n\in \N^*$
\[\begin{split}[\rho_\lambda(a)]_{n} & = \frac{\lambda_n}{\lambda_{n+1}} a_{n+1} \\
& \leq M \frac{\lambda_n}{\lambda_{n+1}} \sum_{k \geq n + 1} \frac{1}{k} a_{k} \\
& \quad = M \frac{\lambda_n}{\lambda_{n+1}} \sum_{k \geq n} \frac{1}{k+1} a_{k+1} \\
& \quad = M \frac{\lambda_n}{\lambda_{n+1}} \sum_{k\geq n} \frac{1}{k} \frac{k\lambda_{k+1}}{(k+1)\lambda_k}  [\rho_\lambda(a)]_k \\
& \leq M \sum_{k\geq n} \frac{1}{k} [\rho_\lambda(a)]_{k}
\end{split}\]
because $\left(\frac{\lambda_k}{k}\right)_k$ is non increasing and $(\lambda_n)_n$ is non decreasing.
\end{proof}

Define for $c,\alpha >0$,
 \[\begin{split} A^\alpha_c = \left\{ (a_n)_{n\in \N^*},\right. & \  \exists n_0\in \N^* \text{ such that } \forall n< n_0, \, a_n\leq a_{n+1} \\ & \left. \text{ and } \forall n\geq n_0, \, ca_{n_0} \left( \frac{n_0}{n} \right)^\alpha \leq a_n \leq \frac{1}{c}a_{n_0} \left( \frac{n_0}{n} \right)^\alpha\right\}.\end{split}\]

\begin{lem} \label{Fen1ssensEM}
For all $(a_n)_n \in A^\alpha_c$ and all $n\geq n_0$ (where $n_0$ is given by the def of $A^\alpha_c$),
\[ a_{n} \simeq \sum_{k \geq n} \frac{1}{k} a_k.\]
In particular, there exists $M$ (only depending on $\alpha$ and $c$) such that $A^\alpha_c \subset \tilde E_M$.
\end{lem}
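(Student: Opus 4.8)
The plan is to exploit the explicit two-sided control $c\,a_{n_0}(n_0/k)^\alpha \leq a_k \leq c^{-1}a_{n_0}(n_0/k)^\alpha$ available for $k\geq n_0$, which turns the weighted tail sum into a power-series tail. First I would fix $n\geq n_0$; summing the defining bounds of $A^\alpha_c$ against the weights $1/k$ gives
\[
\sum_{k\geq n}\frac 1k a_k \simeq a_{n_0}n_0^\alpha \sum_{k\geq n}\frac1{k^{\alpha+1}},
\]
where the implicit constants are exactly $c$ and $c^{-1}$. The key elementary ingredient is then the tail estimate $\sum_{k\geq n}k^{-\alpha-1}\simeq n^{-\alpha}$, valid for all $n\geq 1$ with constants depending only on $\alpha>0$: the lower bound follows from $\sum_{k\geq n}k^{-\alpha-1}\geq \int_n^{\infty}x^{-\alpha-1}\,dx=\frac1\alpha n^{-\alpha}$, and the upper bound from $\sum_{k\geq n}k^{-\alpha-1}\leq n^{-\alpha-1}+\int_n^{\infty}x^{-\alpha-1}\,dx\leq (1+\tfrac1\alpha)n^{-\alpha}$. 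Substituting, and invoking the defining bounds of $A^\alpha_c$ one last time, I obtain
\[
\sum_{k\geq n}\frac1k a_k \simeq a_{n_0}n_0^\alpha\, n^{-\alpha}=a_{n_0}\Bigl(\frac{n_0}{n}\Bigr)^\alpha\simeq a_n,
\]
which is the claimed equivalence for every $n\geq n_0$, with constants depending only on $\alpha$ and $c$.

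For the inclusion $A^\alpha_c\subset\tilde E_M$ I must upgrade this to $a_n\leq M\sum_{k\geq n}\frac1k a_k$ for \emph{every} $n\geq 1$, in particular for $n<n_0$. For $n\geq n_0$ the inequality is immediate from the equivalence just proved. For $n<n_0$, the monotonicity half of the definition yields $a_n\leq a_{n_0}$, while the case $n=n_0$ already gives $a_{n_0}\lesssim\sum_{k\geq n_0}\frac1k a_k$; since $n<n_0$ implies $\sum_{k\geq n_0}\frac1k a_k\leq\sum_{k\geq n}\frac1k a_k$ (the extra terms are nonnegative), I conclude $a_n\leq a_{n_0}\lesssim\sum_{k\geq n}\frac1k a_k$ with the same constant. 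Taking $M$ to be the larger of the two constants handles both ranges of $n$ simultaneously.

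No genuine obstacle arises here; the one point demanding care is that the tail estimate $\sum_{k\geq n}k^{-\alpha-1}\simeq n^{-\alpha}$ be uniform in $n$ with constants depending only on $\alpha$, which the integral comparison above supplies cleanly.
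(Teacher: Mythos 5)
Your proof is correct and follows essentially the same route as the paper's: the two-sided bound $a_k\simeq a_{n_0}(n_0/k)^\alpha$ reduces the tail sum to $\sum_{k\geq n}k^{-\alpha-1}\simeq n^{-\alpha}$, giving the equivalence for $n\geq n_0$. You additionally spell out the case $n<n_0$ (monotonicity up to $n_0$ plus nonnegativity of the extra tail terms), which the paper leaves implicit in its ``in particular'' clause; this is a welcome completion, not a deviation.
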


\begin{proof}
One has if $(a_n)_n \in A^\alpha_c$ and $n\geq n_0$ ,
\[\begin{split}
  \sum_{k\geq n} \frac{1}{k} a_k & \simeq a_{n_0} n_0^\alpha  \sum_{k\geq n} \frac{1}{k^{\alpha+1}} \\
& \simeq  a_{n_0} \left(\frac{n_0}{n}\right)^{\alpha} \\
& \simeq a_n.
\end{split}\]
\end{proof}

We are now ready for the proof of Proposition \ref{Fen1l2l1}.

\begin{proof} (of Proposition \ref{Fen1l2l1})\par
\noindent According to Lemma \ref{Fen1l2l1b} and Lemma \ref{Fen1ssensEM}, we only need to prove that 
\[\mathcal A_0 = \left\{\left( l^{\beta-\eta} \dfrac{\exp\left( -c \frac{4^j r^2}{l}\right)}{l^{1+n}}\right)_{l\in \N^*}, \, r \in \N^*, \, j\geq 2 \right\}\]
 is in some $A_c^\alpha$. 
Indeed, once we proved $\mathcal A_0 \subset A_c^\alpha$, Lemma \ref{Fen1ssensEM} implies that there exists $M>0$ such that $\mathcal A_0 \subset \tilde E_M$. 
The use of Lemma \ref{Fen1l2l1b}(iii) with $\lambda_l = l^{\beta -\eta}$ yields, since $\beta - \eta \in (0,1]$,
\[\mathcal A_1 : = \bigcup_{k\in \N} (\rho_\lambda)^k(\mathcal A_0) = \left\{\left( l^{\beta-\eta} \dfrac{\exp\left( -c \frac{4^j r^2}{l+k}\right)}{(l+k)^{1+n}}\right)_{l\in \N^*}, \, r \in \N^*, \, k\in \N,\, j\geq 2 \right\} \subset \tilde E_M.\]
Lemma \ref{Fen1l2l1b}(ii) thus provides that $\mathcal A \subset \tilde E_M$ and Lemma \ref{Fen1l2l1b}(i) that $\mathcal A \subset E_M$.

It remains to prove that $\mathcal A_0 \subset A_c^\alpha$.
The result is a consequence of the following facts.

 For $\gamma\in [0,1]$ and $n\geq 1$, the function
\[F: t\in \R_+ \mapsto t^{\gamma} \dfrac{\exp\left( - \frac{d}{t}\right)}{t^{n+1}} \] 
satisfies
\begin{itemize}
 \item   $F(0)=0$ and $\ds \lim_{t\rightarrow +\infty} F(t)=0$,   
 \item   $F$ reaches its unique maximum at  $t_0= \frac{d}{n+1-\gamma}$,    
  \item $\ds \frac{e^{\gamma-n-1}}{t^{n+1-\gamma}} \leq F(t) \leq \frac{1}{t^{n+1-\gamma}}$ for all $t\geq t_0$. 
\end{itemize}
\end{proof}

\begin{rmq} \label{Fen1careful}
If $\beta\in \left(-\frac 12,0\right]$ and
\[B_l^{k,r,j} = l^{\beta+\frac{1}{2}} \sup_{s\in \bb 0,nr^2\bn} \left\{ \dfrac{\exp\left( -c \frac{4^j r^2}{l+k+s}\right)}{(l+k+s)^{n+\frac{1}{2}}} \right\},\]
a careful inspection of the proof of Proposition \ref{Fen1l2l1} shows that the conclusion of Proposition \ref{Fen1l2l1} also holds for $B_l^{k,r,j}$.
\end{rmq}

\bibliographystyle{plain}
\bibliography{biblio}

\end{document}